\newtheorem  {theorem}                  {Theorem}
\newtheorem* {theorem*}                   {Theorem}
\newtheorem {lemma}[theorem] {Lemma}
\newtheorem {prop}[theorem]      {Proposition}
\newtheorem* {prop*}     {Proposition}
\newtheorem {corollary}[theorem]      {Corollary}
\theoremstyle{definition}
\newtheorem {defi}[theorem] {Definition}
\newtheorem {Remark} [theorem]         {Remark}
\newtheorem* {Example*}    {Example}
\def\R{\mathbb{R}}
\newcommand{\pp}[2]{\frac{\partial#1}{\partial#2}}
\newcommand{\norm}[1]{\|#1\|}
\newcommand{\ech}{\operatorname{ECH}}
\DeclareMathOperator\curl{curl}
\definecolor{myg}{RGB}{60, 125, 60}
\title{Contact type solutions and non-mixing of the 3D Euler equations}
\author{Robert Cardona}
\address{Robert Cardona,
Departament de Matemàtiques i Informàtica, Universitat de Barcelona, Gran Via de les Corts Catalanes 585, 08007 Barcelona, Spain \newline \it{e-mail: robert.cardona@ub.edu }
 }
\author{Francisco Torres de Lizaur}
\address{Francisco Torres de Lizaur, Departamento de An\'alisis Matem\'atico \& IMUS, Universidad de Sevilla, C/ Tarfia s/n, 41012 Sevilla, Spain \newline \it{e-mail: ftorres2@us.es}}
\thanks{RC acknowledges financial support from the Universitat de Barcelona, from the Margarita Salas postdoctoral
contract financed by the European Union-NextGenerationEU and its host institutions: the Universitat Politècnica de Catalunya and the Instituto de Ciencias Matemáticas. RC was partially supported by the AEI grant PID2019-103849GB-I00 / AEI / 10.13039/501100011033 and the AGAUR grant 2021 SGR 00603. FTL was supported by the European Union's H2020 program under the Marie Curie grant
 GDSFLOWS-101063565, and by the Spanish MINCIN through
the Ram\'on y Cajal program. Both authors also acknowledge partial support from the grant “Computational, dynamical and geometrical
complexity in fluid dynamics”, Ayudas Fundaci\'on BBVA a Proyectos de Investigaci\'on Cient\'ifica 2021.}
\begin{document}

\begin{abstract}
We prove that on any closed Riemannian three-manifold $(M,g)$ the time-dependent Euler equations are non-mixing on the space of smooth volume-preserving vector fields endowed with the $C^1$-topology, for any fixed helicity and large enough energy, solving a problem posed by Khesin, Misiolek, and Shnirelman. To prove this, we introduce a new framework that assigns contact/symplectic geometry invariants to large sets of time-dependent solutions to the Euler equations on any 3-manifold with an arbitrary fixed metric. This greatly broadens the scope of contact topological methods in hydrodynamics, which so far have had applications only for stationary solutions and without fixing the ambient metric. We further use this framework to prove that spectral invariants obtained from Floer theory, concretely embedded contact homology, define new non-trivial continuous first integrals of the Euler equations in certain regions of the phase space endowed with the $C^{1,s}$-topology, producing countably many disjoint invariant open sets.
\end{abstract}

\maketitle

\section{Introduction}

On a Riemannian manifold $(M,g)$, an ideal fluid (i.e. incompressible and without viscosity) moves according to the Euler equations
 \begin{equation}\label{eq:Eul}
\begin{cases}
\frac{\partial}{\partial t} u + \nabla_u u &= -\nabla p\,, \\
\operatorname{div}u=0\,,
\end{cases}
\end{equation}
where $p$ is a scalar function called the pressure, and the vector field $u$ is the fluid's velocity field, which is a non-autonomous vector field on $M$. The term $\nabla_u u$ denotes the covariant derivative of $u$ along itself, and $\operatorname{div}$ is the divergence associated with the Riemannian metric $g$. The Euler equations define a (local) flow $\varphi_t$ on the configuration space, which is the set of $C^{\infty}$-divergence-free vector fields $\mathfrak{X}_{\mu}(M)$. For a given initial condition $u_0$, the flow is known to be defined locally around $t=0$ \cite{EM}, and it is a well-known open problem whether $\varphi_t$ is actually a complete flow in dimensions higher than two. If one chooses as configuration space the set of $C^k$ divergence-free vector fields, where $k>1$ is a non-integer, the flow is also locally well defined.

The dynamical properties of the time-dependent Euler equations are hence captured by this flow on an infinite-dimensional space. In dimension 2, since the flow is complete for regular enough velocities \cite{WO}, these properties have been more thoroughly studied; for instance, the existence of wandering solutions (orbits of $\varphi_t$ that, after a certain positive time, never come back to a vicinity of the initial condition). Nadirashvili \cite{Na} constructed such solutions in the annulus, more precisely, he found a $C^{1}$-open set $U$ of smooth velocity fields such that $\varphi_t(U) \cap U =\emptyset$ after a certain positive time. From a slightly different perspective, Shnirelman \cite{Sh} showed that wandering solutions exist in the two-torus, not in velocity space but in the full lagrangian phase space of diffeomorphisms.

In higher dimensions, the unknown completeness of the flow should not preclude us from studying finer dynamical properties: this is not only possible but of great interest, as recent results attest \cite{CMPP, TL, EPT2, KY}. Khesin, Kuksin, and Peralta-Salas  \cite{KKPS} showed that the Euler equation on compact 3-manifolds is non-mixing, in the following sense: there exist open sets of volume-preserving vector fields (either smooth or $C^k$ with $k>4$ non-integer) whose evolution under $\varphi_t$ stay at a bounded distance in the $C^{k}$-topology for $k> 4$, from other well-chosen open sets. Later on, the same authors proved that this property holds (in the $C^{k}$-topology for $k> 10$) for a generic set of initial conditions, smooth or $C^k$ with $k>10$, near certain classes of steady states, called non-degenerate shear flows \cite{KKP2}. A key ingredient in the work of these authors is the KAM theorem, which lies behind the $C^{4}$ regularity threshold in their results. An open problem proposed in \cite[Problem 31]{KMS} is to show that analogous results hold in lower regularity spaces. \\

The first contribution of this work is establishing analogous non-mixing properties for the Euler equations defined on smooth (or $C^k$ with $k>1$ non-integer) divergence-free vector fields endowed with the $C^1$-topology on any Riemannian closed three-manifold $(M,g)$. Our result is in some sense sharp, since, loosely speaking, in the vorticity formulation it corresponds to $C^0$-topology. 

To make our statement precise, we fix the three known invariants of the Euler equation that are continuous in the $C^{1}$-topology: the energy, the helicity, and the homotopy class of the vorticity (which, in this work, will always be assumed to be everywhere non-vanishing and thus defines a homotopy class of non-vanishing vector fields). Notice that if we consider possibly vanishing vorticities or non-vanishing vorticities on different homotopy classes of non-vanishing fields, non-mixing follows from different elementary arguments, see Remark \ref{rem:vanishvort}. Hence, let $\mathcal{V}_{a,h,e}$ be the set of (either $C^{\infty}$ or $C^{k}$, $k>1$ non-integer) divergence-free vector fields in $M$ preserving the induced Riemannian volume $\mu$ with fixed helicity $h$, fixed energy $e$ and whose curl lies in the homotopy class of non-vanishing vector fields $a$. 
\begin{theorem}\label{thm:main}
    Let $(M,g)$ be a closed Riemannian three-manifold. For any helicity $h\neq 0$, any homotopy class of non-vanishing vector fields $a$, and any energy $e \geq e_{0}$ (where $e_{0}$ depends on $h$ and $a$), there exist two $C^1$-open sets $C_{a,h,e}$ and $N_{a,h,e}$ of $\mathcal{V}_{a,h,e}$ such that
    $$\varphi_t(C_{a,h,e})\cap N_{a,h,e}=\emptyset,$$
    for each $t$ where the local flow defined by the Euler equation $\varphi_t:\mathfrak{X}_\mu(M) \longrightarrow \mathfrak{X}_\mu(M)$ is defined.
\end{theorem}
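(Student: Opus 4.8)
The plan is to extract from the transport structure of the vorticity a contact‑topological quantity that is a first integral of the Euler flow, and then to exhibit two $C^1$‑open subsets of $\mathcal V_{a,h,e}$ on which this quantity takes incompatible values.

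First I would record the transport identity. Writing $\alpha_t:=u_t^\flat$ for the $g$‑dual $1$‑form of a solution $u_t$ and $\beta_t:=d\alpha_t=\iota_{\curl u_t}\mu$ for the vorticity $2$‑form, the Euler equations take the form $\partial_t\alpha_t+\mathcal L_{u_t}\alpha_t=dB_t$ for a Bernoulli‑type function $B_t$, so $\partial_t\beta_t+\mathcal L_{u_t}\beta_t=0$ and hence $\psi_t^*\beta_t=\beta_0$, where $\psi_t$ denotes the Lagrangian (particle) flow of $u_t$ on $M$; moreover $\psi_t$ preserves $\mu$ and is isotopic to the identity, being a flow. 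Consequently, any invariant of the nowhere‑vanishing exact $2$‑form $\beta_0$ under the identity component of the group of $\mu$‑preserving diffeomorphisms of $M$ is a first integral of $\varphi_t$. The invariant I would use is of contact type: $\beta_0$ admits a positive contact primitive $\gamma$ (a $1$‑form with $d\gamma=\beta_0$ and $\gamma\wedge d\gamma>0$) if and only if $\beta_t$ does, since $(\psi_t^{-1})^*\gamma$ is then such a primitive of $\beta_t$; and when $\alpha_0$ is itself a positive contact form, tracking $\alpha_t$ along the continuous path $s\mapsto\psi_s^*\alpha_s=\alpha_0+d(\int_0^s\psi_\tau^*B_\tau\,d\tau)$ of primitives and applying Gray stability shows that the isotopy class of the contact structure $\ker\alpha_t$ does not depend on $t$. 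This invariance is presumably isolated as one of the structural lemmas of the framework announced in the abstract, which I would quote.

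Next I would construct two model initial data. It suffices to produce $u^C,u^N\in\mathcal V_{a,h,e}$ as vector fields rather than as global solutions, because the first integral above is preserved on the whole interval of existence of $\varphi_t$ regardless of completeness of the orbits. I take $u^C$ of contact type, i.e.\ with $(u^C)^\flat$ a positive contact form whose kernel lies in a prescribed isotopy class of contact structures on $M$; and I take $u^N$ so that its vorticity $2$‑form carries a $\mu$‑invariant, $C^1$‑robust obstruction to admitting any positive contact primitive (for instance a persistent null‑homologous closed vortex line with non‑positive flux through a Seifert surface, or — in a refined version — again of contact type but with $\ker(u^N)^\flat$ in a different isotopy class, distinguished by an embedded contact homology spectral invariant of a canonical primitive). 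Both fields must have curl in the fixed class $a$, helicity exactly $h\neq 0$ and energy exactly $e$; realizing these three continuous invariants simultaneously through an explicit localized model — a Beltrami‑type plug inserted into a ball or a thickened torus, suitably normalized and padded with a nearly irrotational high‑energy tail — is possible precisely once $e$ exceeds a threshold $e_0(h,a)$ fixed by the room the model requires, which is how the energy bound enters. Setting $C_{a,h,e}$ and $N_{a,h,e}$ to be small $C^1$‑balls around $u^C$ and $u^N$, one checks that being of contact type with $\ker$ in the prescribed isotopy class holds throughout $C_{a,h,e}$ (the contact condition is $C^1$‑open and the isotopy class is locally constant by Gray stability) and is incompatible with the defining property of $N_{a,h,e}$; hence $\varphi_t(C_{a,h,e})\cap N_{a,h,e}=\emptyset$ whenever $\varphi_t$ is defined.

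I expect the main obstacle to be the construction: matching all three $C^1$‑continuous classical invariants (energy, helicity, vorticity homotopy class) with a prescribed and genuinely different contact‑topological type, while keeping the $N$‑side obstruction robust under $C^1$ perturbations of $u$, that is under $C^0$ perturbations of the vorticity, for which both the notion of obstruction and the model geometry must be chosen with care; identifying $e_0(h,a)$ and verifying that every larger energy is attainable is the quantitative core. By contrast the transport identity and the Gray‑stability argument are soft, and if the coarse contact/non‑contact dichotomy were unrealizable inside some $\mathcal V_{a,h,e}$ one would fall back on separating two contact‑type models by a finer $\mu$‑invariant contact invariant, which is the embedded contact homology refinement developed later in the paper.
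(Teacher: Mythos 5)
Your overall strategy coincides with the paper's: use Helmholtz transport to make ``the vorticity two-form admits a positive contact primitive'' a first integral, separate a $C^1$-ball of contact-type data from a $C^1$-ball of data carrying a $C^0$-robust obstruction to the contact condition, and observe that $C^1$-closeness of velocities gives $C^0$-closeness of vorticities. However, there is a genuine gap in the step you yourself identify as the quantitative core, and your proposed mechanism for it fails. To prescribe the energy you suggest ``padding with a nearly irrotational high-energy tail.'' On a closed three-manifold with $b_1(M)=0$ (e.g.\ $S^3$, which the theorem must cover) there are no nonzero irrotational divergence-free fields: every divergence-free field is exact, the velocity is determined by its vorticity via the Biot--Savart operator, and $\mathcal{E}(u)\leq C\,\|\curl u\|_{L^2}^2$. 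So the energy cannot be boosted while leaving the vorticity (essentially) fixed; it can only be raised by moving the vorticity \emph{within its adjoint orbit under} $\operatorname{Diff}^0_\mu(M)$ so that its Biot--Savart image becomes large. This is exactly what the paper's ``local vortex stretching'' theorem does (pushing the vorticity forward by a compactly supported volume-preserving twist $F_K$ so that $\mathcal{E}(\curl^{-1}F_{K*}X)\sim K^2$), combined with a Moser-path argument to realize \emph{every} energy above the threshold by the intermediate value theorem. Without a substitute for this, your model data $u^C,u^N$ cannot be placed in $\mathcal{V}_{a,h,e}$ for a prescribed $e$, and the theorem is not proved.

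Two smaller points. First, your obstruction on the $N$-side (``a null-homologous closed vortex line with non-positive flux'') is not robust in the boundary case of zero flux and only rules out one sign of contact type; the paper uses Cieliebak's two-orbit criterion (two non-degenerate null-homologous orbits on which a primitive integrates with opposite signs), and its $C^0$-persistence in the space of exact $\mu$-preserving fields is itself a nontrivial theorem (Gronwall estimates for persistence of the non-degenerate orbits, plus elliptic estimates producing a $C^{0,s}$-small primitive of a $C^0$-small exact two-form), not something that can be merely asserted. Second, on the $C$-side you require $(u^C)^\flat$ itself to be a contact form; the paper deliberately only requires that \emph{some} primitive of $\iota_{\curl u^C}\mu$ be contact, precisely because the energy-prescribing diffeomorphisms destroy the stronger property. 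Your insistence on the stronger normalization would make the simultaneous prescription of $(a,h,e)$ harder, not easier.
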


The proof of Theorem \ref{thm:main}, as in \cite{KKPS}, uses Helmholtz's transport of vorticity and an analysis of the space of vorticities. The strategy is to find non-trivial functionals on the space of smooth velocities that are invariant under the flow $\varphi_t$ by assigning to each velocity some property of the corresponding vorticity, and then showing that these are continuous with respect to the $C^{1}$ topology. In the case of \cite{KKPS}, the functional is defined as the volume occupied by knotted invariant tori of the vorticity, which is why KAM is needed to ensure continuity at certain vector fields with integrable regions, and this continuity only holds in the $C^{4}$ topology. Instead of relying on KAM theory, we bring ideas from symplectic and contact geometry to define invariant $C^1$-open properties of time-dependent solutions of the Euler equations.

Thus, our work falls within the scope of contact hydrodynamics (i.e. contact geometry methods in hydrodynamics), inaugurated in the seminal work of Etnyre and Ghrist \cite{EG,EG1,EG2} for the study of stationary solutions to the Euler equations, a connection suggested too by D. Sullivan (see also \cite{GK}). However, until now contact hydrodynamics had applications exclusively to the study of solutions that do not depend on time, and always in closed three-manifolds with an adapted (not fixed a priori) ambient Riemannian metric. See  \cite{CMPP, CV2, MOPS, PSRT, PSS} for some works related to this approach. Our work, instead, introduces a new framework that broadens this scope and addresses the most relevant setting: time-dependent solutions instead of stationary ones, and on any three-manifold with a fixed (and arbitrary) ambient metric. As a byproduct of the techniques used in the proof of Theorem \ref{thm:main}, it can also be established that strong (or even rotational) Beltrami fields, a very important class of stationary solutions to the Euler equations, are not $C^0$-dense among stationary solutions in the round three-sphere (Corollary \ref{cor:statdens}).\\

The main connection between the time-dependent Euler equations and contact geometry that we introduce to prove Theorem \ref{thm:main} is the notion of contact type solution (Definition \ref{def:contactsolution}). We prove that the set of initial conditions that define contact type solutions is, for any $a,h$ and large enough $e$, a large $C^1$-open set $ C_{a, h, e} \subset \mathcal{V}_{a,h,e}$ that is invariant by the Euler equations. The other main piece of the proof is to ensure that these open sets are not dense, that this, that there exist $C^{1}$-open sets of non-contact type solutions in $\mathcal{V}_{a,h,e}$ that never intersect $C_{a,h,e}$.

Once the notion of contact type solution has been introduced, a whole new collection of invariants for the Euler equation can be defined, by assigning contact geometric invariants to solutions in a natural way. These can be simple invariants like the isotopy class of a contact structure (Lemma \ref{lem:continv}), or more sophisticated ones obtained via Floer theory and pseudoholomorphic curves. One of the most relevant and successful theories in Floer and Seibert-Witten theory is embedded contact homology (ECH), introduced by Hutchings (see \cite{Hut2} and references therein). We show that spectral invariants arising in ECH \cite{Hut} can be used to define continuous integrals (with suitable topologies) of the Euler equations (Theorem \ref{thm:contspec}), which can often be shown to be non-trivial, establishing a deep connection between the time-dependent Euler equations and modern technology in contact topology. These integrals provide a better picture of contact type regions of the phase space of the Euler equations on Riemannian three-manifolds: as an application, we establish the existence of countably many $C^{1,s}$-open sets that never intersect each other under the evolution of the Euler equations. 

Recall first that any homotopy class of non-vanishing vector fields $a$ uniquely determines a homotopy class $p_a$ of plane fields, given by any plane field whose direct sum with an element in $a$ spans $TM$.  To state our second main result, let us mention that the proof of Theorem \ref{thm:main} involves as a step showing that in $\mathcal{V}_{a,h,e}$ (for any $a$, any $h$, and any large enough $e$), any isotopy class of contact structure $[\xi]$ whose homotopy class is in $p_a$ defines a (non-empty) $C^1$-open subset $\mathcal{V}_{[\xi]} \subset \mathcal{V}_{a,h,e}$ that is invariant by the Euler equations. 

\begin{theorem}\label{thm:spec}
    Let $(M,g)$ be a closed Riemannian three-manifold. Let $a$ be a homotopy class of non-vanishing vector fields such that any (and hence every) plane field in $p_a$ has a torsion Euler class. For any helicity $h$, large enough energy $e$ and any isotopy class of contact structures $[\xi]$ in $p_a$ , the following hold:
    \begin{enumerate}
        \item Equip $\mathcal{V}_{[\xi]}$ with the topology induced by the $C^{1,s}$-topology in $\mathfrak{X}_\mu(M)$, for any $s\in (0,1)$. There exists a non-trivial continuous functional
    $$c:\mathcal{V}_{[\xi]} \longrightarrow \mathbb{R},$$ 
    that is a first integral of the Euler equations,
    \item there are countably many $C^{1,s}$-open sets $U_i$ in $\mathcal{V}_{[\xi]}$  such that for any $i\neq j$, we have
    $$ \varphi_t(U_i)\cap U_j=\emptyset, $$
    for every $t$ for which the flow of the Euler equations is defined.    
    \end{enumerate}  
\end{theorem}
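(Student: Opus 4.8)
The plan is to build both parts of the statement from a \emph{single} spectral functional attached to the fixed isotopy class $[\xi]$, so that, contrary to a construction using different isotopy classes, all the sets $U_i$ will lie inside the one set $\mathcal{V}_{[\xi]}$ and will be separated by the \emph{values} of that functional. Fix a contact structure $\xi$ representing $[\xi]$ and a class $\sigma\in\ech(M,\xi)$. To each contact type solution $u\in\mathcal{V}_{[\xi]}$ the framework of Definition \ref{def:contactsolution} associates a contact form $\alpha_u$ with $\ker\alpha_u\in[\xi]$, and I set $c(u):=c_\sigma(\alpha_u)$, the ECH spectral invariant of Hutchings. The first-integral property in (1) will come from Helmholtz transport of vorticity: the vorticity is carried by the volume-preserving Lagrangian flow $\phi_t$, which is isotopic to the identity, and correspondingly $\alpha_{\varphi_t(u)}$ and $\alpha_u$ are related by $\phi_t$ (as already used for the isotopy-class invariance in Lemma \ref{lem:continv}). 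By naturality of ECH spectral invariants under diffeomorphisms, together with the fact that a diffeomorphism isotopic to the identity induces the canonical (identity) map on ECH, one gets $c(\varphi_t(u))=c_{(\phi_t)_*\sigma}(\alpha_u)=c_\sigma(\alpha_u)=c(u)$, so $c$ is invariant along the Euler flow.

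For continuity in the $C^{1,s}$-topology I would combine two ingredients. First, the assignment $u\mapsto\alpha_u$ is continuous from the $C^{1,s}$-topology, which controls $u$ and its curl in $C^{0,s}$ so that $\alpha_u$ is a genuine (and $C^0$-continuously varying) contact form, into the $C^0$-topology on contact forms. Second, ECH spectral invariants are $C^0$-continuous functionals of the contact form, by their monotonicity and scaling behaviour. The composition $u\mapsto c_\sigma(\alpha_u)$ is then $C^{1,s}$-continuous; the hypothesis $s>0$ is precisely what guarantees both the local well-posedness of $\varphi_t$ and the nondegeneracy and continuity of $\alpha_u$.

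The crux, common to the non-triviality in (1) and the countable family in (2), is to exhibit \emph{inside the single set} $\mathcal{V}_{[\xi]}$ (same $a,h,e$, same isotopy class) a continuous path $\tau\mapsto u_\tau$ along which $c$ is non-constant. I would produce it by reshaping the associated contact form within a Darboux ball while fixing its contact volume. Since the helicity equals a fixed constant times the contact volume $\int\alpha\wedge d\alpha$, an ellipsoid-type reshaping $E(a,b)$ with $ab$ fixed preserves the helicity but changes the spectral invariant $c_\sigma$ (as in the computation of ECH of ellipsoids), while the energy is restored to its prescribed value $e$ by the scaling freedom of the construction, which does not cancel the change in the normalized spectrum. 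Lifting this path of contact forms to a path of contact type solutions through the (continuous) realization underlying Theorem \ref{thm:main} yields $\{u_\tau\}\subset\mathcal{V}_{[\xi]}$ with $c(u_0)\neq c(u_1)$; by the intermediate value theorem the image $c(\mathcal{V}_{[\xi]})$ contains a nondegenerate interval. This gives non-triviality for (1) and, at once, infinitely many attained values for (2).

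For (2) I then package this purely formally. Choosing a strictly increasing sequence of values $v_1,v_2,\dots$ in that interval and pairwise disjoint open intervals $I_i\ni v_i$, I set $U_i:=c^{-1}(I_i)$. These are non-empty since each $v_i$ is attained, $C^{1,s}$-open by continuity of $c$, and, because $c$ is a first integral, $c(\varphi_t(U_i))\subseteq I_i$, so $\varphi_t(U_i)\cap U_j=\emptyset$ whenever $i\neq j$. The main obstacle is the construction in the previous paragraph: with the metric $g$ fixed and arbitrary, one cannot prescribe $\alpha_u$ freely, so the genuinely hard point is to show that the realization behind Theorem \ref{thm:main} is flexible enough to produce a continuous family of contact type solutions in $\mathcal{V}_{[\xi]}$ at fixed helicity \emph{and} fixed energy whose contact forms have varying ECH spectrum, i.e.\ to decouple the ``shape'' degrees of freedom governing $c_\sigma$ from the two scalar constraints $(h,e)$ while keeping the contact structure within the class $[\xi]$.
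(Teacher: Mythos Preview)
Your overall architecture matches the paper's (define a spectral functional on $\mathcal{V}_{[\xi]}$, prove it is a continuous first integral, then show it attains an interval of values and take preimages), but two of your three main steps have genuine gaps.

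\textbf{Well-definedness.} You write ``the framework of Definition~\ref{def:contactsolution} associates a contact form $\alpha_u$'' and set $c(u)=c_\sigma(\alpha_u)$. But $\alpha_u$ is \emph{not} canonically determined: $\iota_{\curl u}\mu$ has many contact primitives, and there is no reason for the class-based invariant $c_\sigma$ to be independent of that choice. The paper deliberately uses the \emph{full} ECH spectrum $\widetilde c_k$ (Definition~\ref{def:spectral}) precisely because Lemma~\ref{lem:spectwoform} (from \cite[Lemma~3.9]{Hut}) guarantees $\widetilde c_k(\alpha)=\widetilde c_k(\alpha')$ whenever $d\alpha=d\alpha'$. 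Without this, your $c$ is not even a function on $\mathcal{V}_{[\xi]}$.

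\textbf{Continuity.} Your argument ``$u\mapsto\alpha_u$ is continuous into $C^0$'' followed by ``ECH spectral invariants are $C^0$-continuous'' does not close. The $C^0$-continuity of ECH spectral invariants is only known for \emph{conformal} variations $f_j\alpha\to f\alpha$; it is not a statement about arbitrary $C^0$-close contact forms. The paper's proof of Theorem~\ref{thm:contspec} uses the stronger input that the contact primitives can be taken $C^{1,s}$-close (via Lemma~\ref{lem:C1bound}, which needs the $C^{0,s}$-norm of $d\lambda_v-d\lambda_u$, hence the $C^{1,s}$-topology on velocities), and then runs a \emph{quantitative} Gray stability to produce $\Phi^*\alpha_v=h\,\alpha_u$ with $\|h-1\|_{C^0}$ small. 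Only then can one invoke the conformal $C^0$-continuity. Your proposal skips this reduction and, as written, would not go through.

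\textbf{Non-triviality.} Your ``ellipsoid-type reshaping in a Darboux ball at fixed contact volume'' is a plausible heuristic but is not justified; it is unclear that a local modification moves the global spectral number. The paper instead invokes the systolic-ratio theorem of \cite{ABHS} to obtain, for any target $r$, a contact form $\alpha_1$ for $\xi$ with $T_{\min}(\alpha_1)>r$ and the same contact volume, and interpolates linearly (Lemma~\ref{lem:spectralinterval}). The realization at fixed energy is then done pointwise via local vortex stretching (Theorem~\ref{thm:localvortex}); a continuous path of velocities is not needed, since the interval of spectral values is already secured at the level of the contact forms and the invariant is diffeomorphism-invariant.
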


On any closed three-manifold, there are always countably many homotopy classes $a$ such that any plane field in $p_a$ has a torsion Euler class. It is also well known that any homotopy class of plane fields admits a contact structure. Hence, Theorem \ref{thm:spec} applies to countably many choices of $a$ on any closed three-manifold, see Remark \ref{rem:chernhyp}. On the three-sphere, or more generally any rational homology sphere, every plane field has a vanishing Euler class so the theorem applies to each $\mathcal{V}_{[\xi]}$ of any $\mathcal{V}_{a,h,e}$ with large enough energy $e$.\\

The paper is organized as follows: Section \ref{s:intro} reviews the classical functionals on the space of divergence-free fields that are invariant under the Euler equation, and states the main continuity properties of the curl operator and its inverse that we will use. Section \ref{s:open} introduces the set of contact type exact divergence-free fields, some $C^0$-topological properties of this set, and the notion of contact type solution. We introduce a criterion to find exact divergence-free fields that are not of contact type. In Section \ref{s:data} we crucially show that one can always construct large sets of initial conditions that are (or not) of contact type on any Riemannian 3-manifold with any fixed values of helicity $h$, homotopy class $a$ of the vorticity field, and sufficiently large energy $e$. This paves the way for the proof of Theorem \ref{thm:main} in Section \ref{s:main}. We also prove in this Section the important Corollary \ref{cor:statdens} concerning non $C^{0}$-density of rotational and strong Beltrami fields in the round three-sphere. To conclude, we recall the definition of the ECH spectral invariants in Section \ref{s:ech} and use our framework to prove Theorem \ref{thm:spec}, as a combination of Theorems \ref{thm:contspec} and \ref{thm:C1smixing}.\\

\noindent \textbf{Acknowledgements.} The authors are grateful to Daniel Peralta-Salas for useful discussions. We thank Boris Khesin and Rohil Prasad for their useful comments on the first draft of this work.

\section{Divergence-free vector fields and classical invariants}\label{s:intro}
Throughout this paper, we will work on a closed orientable three-dimensional manifold $M$ equipped with a Riemannian metric $g$, and we denote by $\mu$ the induced Riemannian volume form.

\subsection{Divergence-free fields on Riemannian manifolds}
Denote by $\mathfrak{X}_\mu(M)$ the space of smooth vector fields preserving $\mu$, that is $X\in \mathfrak{X}_\mu(M)$ if and only if $\mathcal{L}_X\mu=0$. This condition is also equivalent to the fact that $\iota_X\mu$ is a closed two-form. Denote by $\mathfrak{X}_\mu^0(M)$ the space of exact divergence-free vector fields: i.e. those vector fields $X$ such that $\iota_X\mu$ is exact. 

 Given $X\in \mathfrak{X}_\mu(M)$ we define the energy of $X$ as
$$ \mathcal{E}(X)=\int_M g(X,X)\mu, $$
and the helicity of $X\in \mathfrak{X}^{0}_\mu(M)$ as
$$ \mathcal{H}(X)= \int_M \alpha\wedge d\alpha, $$
where $\iota_X\mu=d\alpha$. It is well known that the helicity of a vector field is preserved under volume-preserving diffeomorphisms. In other words, if $\varphi:M\rightarrow M$ satisfies $\varphi^*\mu=\mu$, then
$$ \mathcal{H}(\varphi_*X)=\mathcal{H}(X), $$
and it is actually the only integral invariant of an exact divergence-free vector field (see \cite{EPT, KPY}).\\

For a vector field $X\in \mathfrak{X}(M)$, we denote by $X^\flat$ the one-form dual to $X$ by the metric, which is determined by imposing $g(X,Y)=X^\flat(Y)$ for each $Y\in \mathfrak{X}(M)$. The curl operator
$$ \operatorname{curl}: \mathfrak{X}(M) \longrightarrow \mathfrak{X}_\mu^0(M)  $$
is a linear differential operator that assigns to a vector field $X$ the vector field $Y$ determined by the equation $\iota_Y\mu=d(X^\flat)$. The helicity of $Y$ can then be written in terms of $X$, since $\mathcal{H}(Y)=\int_M X^\flat \wedge dX^\flat$. The vector field $Y=\operatorname{curl}(X)$ is also called the \emph{vorticity field} of $X$. We state here the well-known properties of the curl operator that we will need (see e.g. \cite[Lemma 2.3]{KKPS}).

\begin{lemma}\label{lem:curl}
    The curl operator on smooth vector fields defines a continuous surjective map, where we endow $\mathfrak{X}(M)$ with the $C^{k+1}$-topology and $\mathfrak{X}^{0}_{\mu}(M)$ with the $C^{k}$-topology, with $k\geq 0$.
\end{lemma}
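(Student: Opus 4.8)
The plan is to separate the statement into two independent claims — continuity and surjectivity — and to reduce each to standard facts: the former to the fact that tensorial operations preserve the differentiability class while $d$ costs exactly one derivative, the latter to Hodge theory on the closed manifold $(M,g)$.

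For continuity I would factor the curl as the composition $X \mapsto X^\flat \mapsto d(X^\flat) \mapsto \operatorname{curl}(X)$, that is: the musical isomorphism $\mathfrak{X}(M) \to \Omega^1(M)$, followed by $d\colon \Omega^1(M) \to \Omega^2(M)$, followed by the inverse of the map $Z \mapsto \iota_Z\mu$ from $\mathfrak{X}(M)$ onto the closed two-forms. The first and third maps are $C^\infty(M)$-linear, being given by pointwise contraction with the smooth tensors $g$ and $\mu$; hence they are bounded in every $C^m$-norm and, in particular, preserve the regularity index. The middle map is a first-order linear differential operator, so it is continuous from the $C^{k+1}$- to the $C^k$-topology for every $k \geq 0$. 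Composing, $\operatorname{curl}$ is continuous from $\mathfrak{X}(M)$ with its $C^{k+1}$-topology to $\mathfrak{X}^0_\mu(M)$ with its $C^k$-topology.

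For surjectivity I would take an arbitrary smooth $Y \in \mathfrak{X}^0_\mu(M)$, so that $\beta := \iota_Y\mu$ is a smooth exact two-form, and produce a smooth primitive for it: by the Hodge decomposition $\Omega^2(M) = \mathcal{H}^2(M) \oplus d\Omega^1(M) \oplus d^*\Omega^3(M)$ on the closed oriented Riemannian manifold $(M,g)$, whose summands all consist of smooth forms, an exact two-form lies in $d\Omega^1(M)$, so $\beta = d\alpha$ for some smooth $\alpha \in \Omega^1(M)$. Then $X := \alpha^\sharp \in \mathfrak{X}(M)$ satisfies $\iota_{\operatorname{curl}(X)}\mu = d(X^\flat) = d\alpha = \iota_Y\mu$, and since $Z \mapsto \iota_Z\mu$ is injective this forces $\operatorname{curl}(X) = Y$. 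Hence $\operatorname{curl}$ maps onto $\mathfrak{X}^0_\mu(M)$.

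The only genuine obstacle is the regularity of the primitive $\alpha$ in the surjectivity argument, which is precisely what the Hodge decomposition supplies: an exact smooth two-form on a closed manifold has a smooth primitive. Everything else is bookkeeping with the orders of the operators involved. One could alternatively replace the Hodge-theoretic step by elliptic regularity for the overdetermined-elliptic system $d\alpha = \beta$, $d^*\alpha = 0$, which amounts to the same thing.
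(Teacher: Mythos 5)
Your proof is correct. The paper does not actually prove this lemma itself --- it cites it as well known, referring to \cite{KKPS} (Lemma 2.3 there) --- and your argument is exactly the standard verification one would supply: factoring $\operatorname{curl}$ through the musical isomorphism, $d$, and the inverse of $Z \mapsto \iota_Z\mu$ (the first and third being tensorial and hence bounded in every $C^m$-norm, the second a first-order operator) gives continuity from $C^{k+1}$ to $C^k$, and the Hodge decomposition supplies the smooth primitive needed for surjectivity.
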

On exact divergence-free vector fields the curl operator admits a well-defined inverse
\begin{align*}
    \curl^{-1}:\mathfrak{X}_\mu^0(M) &\longrightarrow \mathfrak{X}_\mu^0(M)\\
        X(p) & \longmapsto \int_M k(p,q)X(q)d\mu(q),
\end{align*}
where $k(p,q)$ is a matrix-valued integral kernel.  One can use the inverse curl operator to give another expression of the helicity
$$ \mathcal{H}(X) = \int_M X\cdot \curl^{-1}X\mu,$$
where the dot denotes the scalar product defined by the metric $g$, i.e. $X\cdot Y= g(X,Y)$.
By its definition as an integral operator, one deduces the following continuity property.
\begin{lemma}\label{lem:curlinverse}
 The operator $\curl^{-1}$ is continuous in $\mathfrak{X}_\mu^0(M)$ equipped with the $C^0$-topology.
\end{lemma}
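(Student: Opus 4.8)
\emph{Proof sketch.} The plan is to use the linearity of $\curl^{-1}$ to reduce the statement to a single uniform bound. Concretely, it suffices to produce a constant $C>0$ such that $\norm{\curl^{-1}X}_{C^0}\leq C\,\norm{X}_{C^0}$ for every $X\in\mathfrak{X}^0_\mu(M)$: then, given $X_n\to X$ in the $C^0$-topology, linearity yields
$$\norm{\curl^{-1}X_n-\curl^{-1}X}_{C^0}=\norm{\curl^{-1}(X_n-X)}_{C^0}\leq C\,\norm{X_n-X}_{C^0}\longrightarrow 0,$$
which is precisely the asserted continuity.

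To obtain such a $C$ I would work directly with the integral representation $\curl^{-1}X(p)=\int_M k(p,q)X(q)\,d\mu(q)$ and estimate the matrix-valued kernel $k$. Transported to one-forms via the metric and the Hodge star, $\curl$ becomes the first-order operator $\beta\mapsto \ast d\beta$, which on the subspace corresponding to $\mathfrak{X}^0_\mu(M)$ (the co-exact one-forms) is invertible with inverse $\ast d$ composed with the Green operator of the Hodge Laplacian; hence $\curl^{-1}$ is (modulo a smoothing operator) a classical pseudodifferential operator of order $-1$ on the closed $3$-manifold $M$. By standard elliptic theory its Schwartz kernel $k(p,q)$ is then smooth off the diagonal of $M\times M$ and, in geodesic normal coordinates near the diagonal, is dominated by $c\,d_g(p,q)^{-2}$ for some constant $c$ — the same singularity as the classical Biot–Savart kernel in $\R^3$. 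Since $\dim M=3$, the function $q\mapsto d_g(p,q)^{-2}$ is $\mu$-integrable with integral bounded independently of $p$, while away from a fixed neighborhood of the diagonal $|k(p,q)|$ is bounded by compactness of $M$; therefore
$$C:=\sup_{p\in M}\int_M |k(p,q)|\,d\mu(q)<\infty.$$

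With this constant the estimate is immediate: for every $p\in M$,
$$|\curl^{-1}X(p)|\ \leq\ \int_M |k(p,q)|\,|X(q)|\,d\mu(q)\ \leq\ \norm{X}_{C^0}\int_M |k(p,q)|\,d\mu(q)\ \leq\ C\,\norm{X}_{C^0},$$
so $\norm{\curl^{-1}X}_{C^0}\leq C\,\norm{X}_{C^0}$ and we conclude as above. The only genuinely non-formal point — and hence the main obstacle — is the control of $k$ near the diagonal: one must know that $\curl^{-1}$ gains exactly one derivative, so that the diagonal singularity of its kernel is of order strictly below $\dim M=3$ and therefore integrable in $q$ uniformly in $p$. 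This is exactly where three-dimensionality enters, and it is what makes the $C^0\to C^0$ bound hold; everything else in the argument is a routine consequence of linearity and the triangle inequality.
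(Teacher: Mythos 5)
Your argument is correct and is exactly the one the paper has in mind: the paper simply asserts that continuity follows "by its definition as an integral operator," and your proof fills in the standard details (linearity reduces the claim to a uniform bound, which follows from the Biot--Savart-type kernel having an integrable singularity of order $d_g(p,q)^{-2}$ on a $3$-manifold). No gaps.
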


\subsection{Primitive bounds of exact two-forms}
We will introduce in this subsection two auxiliary lemmas that we will need for different arguments throughout the rest of the sections.

\begin{lemma}\label{lem:C0bound}
Let $\eta$ be an exact two-form on a closed three-manifold $M$. For any $s\in (0,1)$, there exists a primitive $\beta$ of $\eta$ such that
$$ \norm{\beta}_{C^{0,s}} \leq C(s) \norm{\eta}_{C^0}. $$
\end{lemma}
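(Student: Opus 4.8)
The plan is to use Hodge theory to pick out a canonical primitive and then invoke elliptic (Schauder-type) estimates. Since $\eta$ is an exact two-form on the closed three-manifold $M$, it is in particular co-exact in the Hodge decomposition, and we may look for the primitive $\beta$ of minimal $L^2$-norm, i.e. the unique primitive that is co-closed: $d\beta = \eta$ and $d^\ast\beta = 0$. Equivalently, writing $\gamma = \ast\eta$ (a one-form) and recalling that on a three-manifold the Hodge star identifies two-forms with one-forms, one solves $\Delta\beta = \delta\eta$ (with $\beta$ orthogonal to harmonic one-forms) and checks, using $d\eta = 0$ and $\eta$ exact, that the resulting $\beta$ indeed satisfies $d\beta = \eta$; this is the standard construction of the Hodge-theoretic primitive via the Green operator $G$ of the Hodge Laplacian, namely $\beta = \delta G \eta$.

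Next I would invoke interior-plus-global elliptic regularity for the Hodge Laplacian on a closed manifold in Hölder spaces: the Green operator $G$ maps $C^{0,s}$ to $C^{2,s}$ with a bound depending only on $(M,g)$ and $s$, and $\delta$ loses one derivative, so $\beta = \delta G \eta$ obeys $\norm{\beta}_{C^{1,s}} \leq C'(s)\,\norm{\eta}_{C^{0,s}}$. To get a bound in terms of $\norm{\eta}_{C^0}$ rather than $\norm{\eta}_{C^{0,s}}$, I would instead use the mapping property of $G$ directly from $C^0$ (or $L^p$ for large $p$): elliptic theory gives $\delta G: C^0 \to C^{0,s}$ bounded for every $s \in (0,1)$ — one can route this through $\delta G : L^p \to W^{1,p} \hookrightarrow C^{0,s}$ for $p > 3/(1-s)$ by Morrey's embedding, together with $\norm{\eta}_{L^p} \leq \operatorname{vol}(M)^{1/p}\norm{\eta}_{C^0}$. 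Composing these bounded maps yields the constant $C(s)$ as claimed, absorbing the (metric-dependent, hence fixed) volume factor and embedding constants into $C(s)$.

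The only genuine subtlety — and the step I would be most careful about — is the gain of Hölder regularity when starting merely from the $C^0$-norm of $\eta$: first-order elliptic systems do not map $C^0$ to $C^{0,s}$ "for free" via Schauder estimates (which need a Hölder-continuous right-hand side), so one must genuinely pass through $L^p$-theory and Sobolev embedding, and verify that the endpoint $s \uparrow 1$ is correctly excluded (which matches the hypothesis $s \in (0,1)$). A minor bookkeeping point is confirming that the harmonic-form projection does not spoil the estimate, which is immediate since projection onto the finite-dimensional space of harmonic one-forms is a bounded operator in every norm in sight. With these caveats handled, the argument is otherwise a routine application of standard linear elliptic theory on a compact manifold.
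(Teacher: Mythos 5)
Your proposal is correct and follows essentially the same route as the paper: both select the co-closed primitive of $\eta$ (the paper realizes it as the metric dual of $\curl^{-1}$ applied to the vector field dual to $\eta$, you as $\delta G\eta$ via the Green operator — these agree) and then obtain the Hölder bound from $L^p$ elliptic estimates combined with the Sobolev/Morrey embedding $W^{1,p}\hookrightarrow C^{0,1-3/p}$. The only cosmetic difference is that the paper uses the first-order operator $d+\delta$ and supplies the required zeroth-order term $\norm{\beta}_{L^p}$ via the $C^0$-continuity of $\curl^{-1}$ (its Lemma \ref{lem:curlinverse}), whereas you get the full estimate directly from the mapping properties of $\delta G$.
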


\begin{proof}
Let $Y$ be the vector field satisfying $i_{Y} \Omega=\eta$, and let $X:=\curl^{-1} Y$. Then the 1-form $\beta=i_{X} g$ is a primitive of $\eta$, and moreover it satisfies $d \star \beta=0$, because $X$ is divergence-free. Observe also that by virtue of Lemma \ref{lem:curlinverse}, $||\beta||_{C^{0}} \leq C ||\eta||_{C^{0}}$.

Now let $\delta=-\star d \star$ be the $L^{2}$-adjoint of $d$ acting on 1-forms. By standard elliptic estimates, we have
\begin{align*}
||\beta||_{W^{1, p}} \leq C \bigg(||d \beta+\delta \beta||_{L^{p}}+||\beta||_{L^{p}}\bigg) \\ \leq 
C ||\eta||_{L^{p}}+C ||\beta||_{L^{p}} \leq C' ||\eta||_{C^{0}}
\end{align*}
where we have used that both $d \beta=\eta$ and $\beta$ are small in $C^{0}$, and thus in $L^{p}$, and $\delta \beta=-\star d \star \beta=0$. 

By the Sobolev embedding theorem, $W^{1, p}(M) \subset C^{r, \alpha}(M)$ whenever  $p>3$ and $r+\alpha=1-\frac{3}{p}$, so we conclude that
\[
||\beta||_{C^{0, 1-\frac{3}{p}}} \leq C(p) ||\eta||_{C^{0}}
\]
Setting $s=1-3/p$ yields the claim.
\end{proof}

On the other hand, if an exact two-form is small in the $C^{0,s}$ norm, we can find a primitive that is small in the $C^{1,s}$-norm by using the Green operator of the Laplace-Beltrami operator, see for example \cite[Lemma 3.28]{CV1}.

\begin{lemma}\label{lem:C1bound}
Let $\eta$ be an exact two-form in a closed three-manifold $M$. For any $s\in (0,1)$, there exists a primitive $\beta$ of $\eta$ such that
$$ \norm{\beta}_{C^{1,s}} \leq C(s) \norm{\eta}_{C^{0,s}}. $$
\end{lemma}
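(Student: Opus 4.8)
The plan is to produce the primitive explicitly by Hodge theory, following the cited \cite[Lemma 3.28]{CV1}. Let $\Delta=d\delta+\delta d$ denote the Hodge--de Rham Laplacian acting on $2$-forms on the closed manifold $M$, let $G$ be its Green operator, and let $H$ be the $L^2$-orthogonal projection onto the finite-dimensional space of harmonic $2$-forms. Then $G$ commutes with $d$ and $\delta$, and $\mathrm{Id}=\Delta G+H$ on $2$-forms. Since $\eta$ is exact it is $L^2$-orthogonal to all harmonic forms, hence $H\eta=0$ and $\eta=\Delta G\eta=d\delta G\eta+\delta d G\eta$. Because $\eta$ is closed, $dG\eta=Gd\eta=0$, so the last term vanishes and $\eta=d\delta G\eta$. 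Therefore $\beta:=\delta G\eta$ is a primitive of $\eta$ (and incidentally $\delta\beta=\delta^2 G\eta=0$, so it is coclosed, matching the kind of primitive produced in Lemma \ref{lem:C0bound}).

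The estimate then reduces to elliptic regularity in Hölder spaces. By the Schauder estimates for the second-order elliptic operator $\Delta$ on the closed manifold $M$, the Green operator is bounded $G:C^{0,s}(M,\Lambda^2 T^*M)\to C^{2,s}(M,\Lambda^2 T^*M)$ for every $s\in(0,1)$; that is, $\norm{G\eta}_{C^{2,s}}\le C(s)\norm{\eta}_{C^{0,s}}$. Since $\delta$ is a first-order differential operator with smooth coefficients, applying it costs one derivative, so
$$ \norm{\beta}_{C^{1,s}}=\norm{\delta G\eta}_{C^{1,s}}\le C\,\norm{G\eta}_{C^{2,s}}\le C(s)\,\norm{\eta}_{C^{0,s}}, $$
which is the claimed bound.

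The only point requiring care is the Schauder bound $\norm{G\eta}_{C^{2,s}}\le C(s)\norm{\eta}_{C^{0,s}}$. This is standard: one combines the a priori bound $\norm{G\eta}_{C^0}\le C\norm{\eta}_{C^0}$ (coming from the $L^2$-boundedness of $G$ together with Sobolev embedding, or from the mapping properties of the order $-2$ operator $G$) with the local $C^{2,s}$ Schauder estimate for $\Delta$ obtained by freezing coefficients in coordinate charts and patching. In contrast with the proof of Lemma \ref{lem:C0bound}, where the inverse curl operator was invoked to gain the first derivative before applying elliptic estimates, here no bootstrap is needed: a single application of the Green operator already supplies two derivatives in the Hölder scale, and then one is discarded by $\delta$. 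I do not expect any genuine obstacle beyond correctly invoking this elliptic estimate.
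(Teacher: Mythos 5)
Your proof is correct and takes essentially the same approach the paper intends: the paper gives no argument of its own but simply cites the Green-operator construction of \cite[Lemma 3.28]{CV1}, i.e. setting $\beta=\delta G\eta$ and invoking Schauder estimates for $\Delta$, which is exactly what you carry out. The one point needing care — controlling the lower-order term in the a priori Schauder estimate via the $L^2$-boundedness of $G$ — is correctly flagged and handled in your write-up.
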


\subsection{The classical invariants of the Euler equations: energy, helicity, homotopy}\label{ss:eul}
Let $u_t$ be a time-dependent solution to the Euler equations \eqref{eq:Eul} in $(M,g)$. The first conserved quantity of $u_t$ is its energy: the equation
$$ \mathcal{E}(u_t)=\mathcal{E}(u_0)$$
holds for each $t$ for which the solution is defined.  Another main property of the Euler equations \eqref{eq:Eul} is the so-called Helmholtz's transport of vorticity, also known as Kelvin's circulation theorem. Taking the curl of Eq. \eqref{eq:Eul}, we obtain the equation
\begin{equation} \label{eq:Helm}
\partial_t\omega+[u,\omega]=0, 
\end{equation}
where $\omega:=\operatorname{curl}(u)$. This equation means that the vorticity field of $u$ is transported by the flow, i.e. the vorticity of $u$ at time $t$ is diffeomorphic to the vorticity at time zero for each time. Concretely, if $u_t$ is a solution to the Euler equations, define the time-dependent ODE
\begin{align*}
    \dot{x}=u_t(x), \enspace x\in M,
\end{align*}
which describes the motion of each particle in time (this is the so-called Lagrangian description of fluid motion). If $\phi_t:M\rightarrow M$ is the flow integrating this equation, then 
\begin{equation}\label{eq:transport}
    \curl u_t= (\phi_t)_*(\curl u_0),
\end{equation}
and hence at each time, the vorticity is diffeomorphic to the initial one.

Another interpretation can be given by considering the Euler equation as a geodesic flow on the Lie group of volume preserving diffeomorphisms. Denote by $\operatorname{Diff}_\mu^0(M)$ the set of diffeomorphisms of $M$ isotopic to the identity and preserving the volume form $\mu$.  This is a Lie group whose Lie algebra $\mathfrak{g}$ is the space of divergence-free vector fields in $M$. Then the vorticity transport implies that the curl of a solution to the Euler equations belongs to the adjoint orbit of $\curl u_0$ (under the action of $\operatorname{Diff}_\mu^0(M)$) for all time. 

Since the helicity of a vector field is invariant under the action of $\operatorname{Diff}_\mu^0(M)$, it follows that the helicity of the curl of $u_t$ is an invariant of $u_t$ as well. Finally, observe that if the curl of $u_0$ is non-vanishing, then it defines a homotopy class of non-vanishing vector fields. The fact that $\curl u_t$ is obtained by pushing forward $\curl u_0$ by a diffeomorphism isotopic to the identity trivially implies that this homotopy class is another invariant of $u_t$. In this work, we will study the dynamical system defined by the Euler equations on the invariant set defined by fixing a positive energy value of the solution, a helicity value of its vorticity, and a homotopy class of non-vanishing vector fields (realized by the vorticity of the solutions that we consider).

\begin{Remark}\label{rem:vanishvort}
If we do not restrict to non-vanishing vorticities and a fixed homotopy class, then non-mixing in the $C^1$-topology follows from different elementary arguments combined with the techniques in Section \ref{s:data}. For instance, construct initial conditions with non-homotopic vorticities and their neighborhood will not mix under the Euler equations. One can also consider vorticities with a different number of hyperbolic zeroes (see \cite[Remark 6.7]{KKPS}), etc. Of course, such elementary arguments do not allow proving Theorem \ref{thm:main}, the much stronger existence of new conserved quantities (Theorem \ref{thm:spec}), and do not give tools to analyze the neighborhood of non-vanishing stationary solutions (e.g. Corollary \ref{cor:KKPSint}). 
\end{Remark}

\section{Open properties of vorticities}\label{s:open}

In this section, we import ideas from contact and symplectic geometry to find properties of a non-vanishing exact divergence-free vector field that can be robust to $C^0$-perturbations, invariant under volume-preserving transformations, and do not depend on the helicity. In this section, every vector field that we consider is non-vanishing everywhere in $M$.

\subsection{Contact type vorticities}\label{ss:ctvorticities}

Recall that a one-form $\alpha$ in $M$ is a contact form if $\alpha\wedge d\alpha \neq 0$. If $M$ is oriented, for instance by the Riemannian volume form $\mu$, we say that $\alpha$ is a positive (respectively negative) contact form if the orientation induced by $\alpha\wedge d\alpha$ matches the orientation of $M$ (respectively the opposite one). The plane distribution $\xi=\ker \alpha$ defined by $\alpha$ is called a positive (respectively negative) contact structure, and any positive multiple of $\alpha$ is another contact form defining $\xi$. A contact form $\alpha$ defines a Reeb vector field $R$ by the equations 
\begin{equation*}
    \begin{cases}
        \alpha(R)=1,\\
        \iota_Rd\alpha=0.
    \end{cases}
\end{equation*}
Two contact structures $\xi_0,\xi_1$ are homotopic if there exists a one-parametric family of contact structures $\xi_t$ connecting them. If there exists a diffeomorphism $\phi:M\rightarrow M$ such that $\phi^*\xi_1=\xi_0$, then the contact structures are isomorphic, and if $\phi$ is defined via an isotopy $\phi_t$, we say that the contact structures are isotopic. It is clear that isotopic contact structures are homotopic, and it is a classical result that the converse holds on closed manifolds, this is known as Gray stability.

\begin{theorem}[Gray \cite{Gray}]\label{thm:gray}
    Let $M$ be a closed manifold. Two homotopic contact structures are isotopic.
\end{theorem}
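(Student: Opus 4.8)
The plan is to prove Gray stability by Moser's deformation method (the ``Moser trick''). Suppose $\xi_0$ and $\xi_1$ are connected by a smooth family of contact structures $\{\xi_t\}_{t\in[0,1]}$. After passing to the orientation double cover if necessary and carrying out the argument equivariantly, I may assume each $\xi_t$ is coorientable, and since $[0,1]$ is contractible I can choose a smooth family of contact forms $\alpha_t$ with $\ker\alpha_t=\xi_t$. The goal is to produce an isotopy $\psi_t$ of $M$ with $\psi_0=\mathrm{id}$ and $\psi_t^*\alpha_t=\lambda_t\,\alpha_0$ for some smooth family of positive functions $\lambda_t\colon M\to\mathbb{R}_{>0}$; this yields $\psi_t^*\xi_t=\xi_0$, and $\psi_1$ is then the sought isotopy.

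First I would differentiate the relation $\psi_t^*\alpha_t=\lambda_t\alpha_0$ in $t$. Using Cartan's formula for the Lie derivative along the time-dependent generator $X_t$ of $\psi_t$, one is naturally led to search for $X_t$ \emph{inside} $\xi_t$, so that $\iota_{X_t}\alpha_t=0$ and the equation collapses to
$$\dot\alpha_t+\iota_{X_t}d\alpha_t=\mu_t\,\alpha_t,$$
where $\mu_t$ is an a priori unknown function (related to $\dot\lambda_t/\lambda_t$ transported by $\psi_t$). Evaluating this identity on the Reeb vector field $R_t$ of $\alpha_t$ and using $\alpha_t(R_t)=1$, $\iota_{R_t}d\alpha_t=0$ forces $\mu_t=\dot\alpha_t(R_t)$, which determines $\mu_t$ explicitly. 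It then remains to solve
$$\iota_{X_t}d\alpha_t=\mu_t\,\alpha_t-\dot\alpha_t$$
for $X_t\in\xi_t$: the right-hand side annihilates $R_t$ (by the choice of $\mu_t$), and the restriction $d\alpha_t|_{\xi_t}$ is nondegenerate precisely because $\alpha_t$ is a contact form, so this equation has a unique solution $X_t\in\xi_t$, depending smoothly on $t$ and on the point of $M$.

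Finally, since $M$ is closed, the time-dependent vector field $X_t$ is complete, so its flow $\psi_t$ exists for all $t\in[0,1]$; integrating back, $\lambda_t$ is recovered from $\tfrac{d}{dt}\log\lambda_t=\mu_t\circ\psi_t$ with $\lambda_0\equiv1$, hence $\lambda_t>0$, and a direct check shows $\psi_t^*\alpha_t=\lambda_t\alpha_0$, whence $\psi_t^*\xi_t=\xi_0$. The only genuine subtlety is the reduction to the coorientable case and the smooth choice of the family $\alpha_t$; once that is set up, the construction of $X_t$ is pointwise linear algebra (inverting $d\alpha_t|_{\xi_t}$, which is where contactness is used) and completeness is automatic on a closed manifold, so there is no hard analytic obstacle — the argument is essentially this algebraic inversion packaged with an ODE existence statement.
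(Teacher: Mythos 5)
Your argument is correct: it is the standard Moser-trick proof of Gray stability (essentially the one in Geiges's book \cite{Ge}, which the paper itself cites elsewhere), and all the key points are in place — restricting the generating field $X_t$ to lie in $\xi_t$, determining $\mu_t$ by evaluating on the Reeb field, inverting the nondegenerate form $d\alpha_t|_{\xi_t}$, and using closedness of $M$ for completeness of the flow. Note, however, that the paper offers no proof of this statement at all: it is quoted as a classical result of Gray \cite{Gray}, so there is no internal argument to compare yours against; in the paper's setting the coorientability reduction you mention is also moot, since every contact structure there arises as the kernel of a global contact form.
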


We say that an exact non-degenerate two-form $\omega$ in a three-dimensional manifold $M$ is of contact type \cite{Mc} if $\omega=d\alpha$ for some contact form $\alpha$. This definition readily generalizes to odd-dimensional manifolds as well. The same terminology can be introduced for exact divergence-free vector fields.

\begin{defi}
Let $X$ be an exact divergence-free vector field in $M$. If $\iota_X\mu=d\alpha$ for some (positive or negative) contact form $\alpha$, we say that $X$ is (positive or negative) of contact type.
\end{defi}

Although we will not need this observation, note that if $X$ is of contact type then it is a reparametrization of the Reeb field defined by the contact form $\alpha$, concretely $X=\alpha(X)R$. We denote by $\mathcal{C}_\mu \subset \mathfrak{X}_\mu^0(M)$ the set of contact type exact divergence-free vector fields, which is given by the union of $\mathcal{C}_\mu^+(M)$ and $\mathcal{C}_\mu^-(M)$ (positive and negative contact type). We will see shortly that these two sets are in fact disjoint.

Denote by $\operatorname{Cont}^+(M)$ the set of positive contact structures in $M$, defined up to isotopy. For each $[\xi] \in \operatorname{Cont}^+(M)$, denote by $\mathcal{C}_\mu^+(M,\xi)$ the set of exact divergence-free vector fields $X$ such that $\iota_X\mu=d\alpha$ where $\alpha$ defines a contact structure isotopic to $\xi$. With this notation, the set of (positive) contact type divergence-free vector fields can be expressed as
\begin{equation}\label{eq:contdec}
    \mathcal{C}_\mu^+(M)=\bigcup_{\xi\in \operatorname{Cont}^+(M)} \mathcal{C}_\mu^+(M,\xi).
\end{equation} 
These sets have several useful properties as subsets of $\mathfrak{X}_\mu^0(M)$ equipped with the $C^0$-topology.
\begin{lemma}\label{lem:Xcont}
The following properties hold:
\begin{itemize}
    \item[-] $\mathcal{C}_\mu^+$ and $\mathcal{C}_\mu^-$ are $C^0$-open in $\mathfrak{X}_\mu^0(M)$,
    \item[-] each $\mathcal{C}_\mu^+(M,\xi)$ is $C^0$-open in $\mathcal{C}_\mu^+$,
    \item[-] if $\xi$ and $\xi'$ are not isotopic, then $\mathcal{C}_\mu^+(M,\xi)\cap \mathcal{C}_\mu^+(M,\xi')=\emptyset$.
\end{itemize}
An analogous statement holds for negative contact type fields, and shows in particular that $\mathcal{C}_\mu^+(M)$ and $\mathcal{C}_\mu^-(M)$ are disjoint.
\end{lemma}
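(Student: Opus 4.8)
The plan is to deduce all three bullet points from the stability of contact conditions under $C^0$-small perturbations of the two-form $d\alpha$, combined with the primitive bounds of Lemmas \ref{lem:C0bound} and \ref{lem:C1bound} and Gray stability (Theorem \ref{thm:gray}). For openness of $\mathcal{C}_\mu^\pm$: suppose $X\in\mathcal{C}_\mu^+$, so $\iota_X\mu=d\alpha$ with $\alpha$ a positive contact form. Given another exact divergence-free field $X'$ that is $C^0$-close to $X$, the two-form $\eta:=\iota_{X'-X}\mu$ is exact and $C^0$-small; by Lemma \ref{lem:C0bound} (or directly by the bound $\|\beta\|_{C^0}\le C\|\eta\|_{C^0}$ coming from $\curl^{-1}$ in Lemma \ref{lem:curlinverse}) there is a primitive $\beta$ of $\eta$ with $\|\beta\|_{C^{0,s}}\le C(s)\|X'-X\|_{C^0}$ for a fixed $s\in(0,1)$. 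The point is that we actually need $\beta$ small in $C^1$ to control $d(\alpha+\beta)=d\alpha+\eta$; here the natural move is to first apply Lemma \ref{lem:C0bound} to get a $C^{0,s}$ primitive and then Lemma \ref{lem:C1bound} to the same exact two-form to get a primitive $\beta$ with $\|\beta\|_{C^{1,s}}\le C(s)\|\eta\|_{C^{0,s}}\le C'(s)\|X'-X\|_{C^0}$ — the two estimates chain to give a $C^1$-small primitive controlled by the $C^0$-size of $X'-X$. Then $\alpha':=\alpha+\beta$ is $C^1$-close to $\alpha$, hence is still a contact form with the same sign (the condition $\alpha\wedge d\alpha\neq 0$ and its sign are $C^1$-open among one-forms, by continuity of the function $p\mapsto (\alpha\wedge d\alpha)(p)/\mu(p)$), and $d\alpha'=d\alpha+\eta=\iota_{X'}\mu$. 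Thus $X'\in\mathcal{C}_\mu^+$, proving $C^0$-openness; the negative case is identical.

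For the second bullet, given $X\in\mathcal{C}_\mu^+(M,\xi)$ the construction above produces, for every nearby $X'\in\mathcal{C}_\mu^+$, a contact form $\alpha'$ with $\iota_{X'}\mu=d\alpha'$ and $\alpha'$ $C^1$-close to the original $\alpha$. A path of the form $\alpha_t=\alpha+t\beta$, $t\in[0,1]$, stays contact for $\beta$ small enough, so $\ker\alpha'$ is homotopic through contact structures to $\ker\alpha=\xi$; by Gray stability (Theorem \ref{thm:gray}) $\ker\alpha'$ is isotopic to $\xi$, i.e. $X'\in\mathcal{C}_\mu^+(M,\xi)$. This shows $\mathcal{C}_\mu^+(M,\xi)$ is $C^0$-open inside $\mathcal{C}_\mu^+$. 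For the third bullet, if $X\in\mathcal{C}_\mu^+(M,\xi)\cap\mathcal{C}_\mu^+(M,\xi')$ then $\iota_X\mu=d\alpha=d\alpha'$ with $\ker\alpha$ isotopic to $\xi$ and $\ker\alpha'$ isotopic to $\xi'$; since $d\alpha=d\alpha'$ we have $\alpha'=\alpha+df$ for a function $f$, and the straight-line path $\alpha+t\,df$ has $d(\alpha+t\,df)=d\alpha$, so it is a path of contact forms connecting $\alpha$ and $\alpha'$, whence $\ker\alpha$ and $\ker\alpha'$ are homotopic, hence (again by Gray) isotopic, forcing $[\xi]=[\xi']$. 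The contrapositive is the claim. Finally, since a positive and a negative contact form can never have the same exterior derivative equal to a fixed $\iota_X\mu$ (the sign of $\alpha\wedge d\alpha=\frac12\,d(\alpha\wedge\alpha)+\ldots$; more directly, if $\iota_X\mu=d\alpha=d\alpha'$ with $\alpha$ positive and $\alpha'$ negative, then $\alpha'-\alpha$ is closed and along the path $\alpha+t(\alpha'-\alpha)$ the top form $(\alpha+t(\alpha'-\alpha))\wedge d\alpha$ would have to vanish somewhere, contradicting that it is everywhere a contact form — one can instead argue directly that $\iota_X(\iota_X\mu\wedge\,\cdot\,)$ pins down the sign), the sets $\mathcal{C}_\mu^+(M)$ and $\mathcal{C}_\mu^-(M)$ are disjoint.

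The main obstacle I anticipate is the first step: producing, from a $C^0$-small perturbation of the \emph{vorticity}, a $C^1$-small perturbation of the \emph{primitive} one-form, since $d$ loses a derivative. This is exactly what forces the two-stage use of Lemmas \ref{lem:C0bound} and \ref{lem:C1bound} (going $C^0\to C^{0,s}\to C^{1,s}$) rather than a naive single elliptic estimate, and it is the reason the whole circle of results lives naturally in the $C^0$/$C^{0,s}$ world for vorticities. Everything after that — the contact condition being $C^1$-open, Gray stability turning homotopy into isotopy, and the straight-line-of-closed-forms trick for the disjointness statements — is soft and routine.
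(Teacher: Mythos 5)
Your overall architecture (small primitive of the difference via Lemma \ref{lem:C0bound}, correction of the contact form, linear interpolation plus Gray stability) is the same as the paper's, but the step you yourself single out as the ``main obstacle'' contains a genuine error. You claim $\norm{\eta}_{C^{0,s}}\leq C'(s)\norm{X'-X}_{C^0}$ for $\eta=\iota_{X'-X}\mu$ in order to feed $\eta$ into Lemma \ref{lem:C1bound} and obtain a $C^{1,s}$-small primitive. That inequality is false: the $C^{0,s}$ norm of $\eta$ controls the H\"older seminorm of $X'-X$, which is not bounded by its $C^0$ norm (think of $n^{-1}\sin(n^2x)$). So the two-stage chain $C^0\to C^{0,s}\to C^{1,s}$ does not close, and as written your proof of the first bullet fails. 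The point you missed is that no $C^1$ control on the primitive correction is needed at all: writing $\alpha'=\alpha+\beta$, the differential $d\alpha'=d\alpha+\eta=\iota_{X'}\mu$ is \emph{already} $C^0$-close to $d\alpha=\iota_X\mu$ directly from the hypothesis that $X'$ is $C^0$-close to $X$; the only thing Lemma \ref{lem:C0bound} must supply is $C^0$-smallness of $\beta$ itself, so that $\alpha'\wedge d\alpha'=(\alpha+\beta)\wedge(d\alpha+\eta)$ is $C^0$-close to $\alpha\wedge d\alpha$ and hence positive. This is exactly how the paper argues, and the same remark repairs your second bullet (the interpolated forms $\alpha+t\beta$ have differential $d\alpha+t\eta$, again controlled by hypothesis, not by any bound on $d\beta$).

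Two smaller points. In the third bullet you write $\alpha'=\alpha+df$; in general the two primitives differ by a \emph{closed}, not necessarily exact, one-form, although your straight-line argument survives this: $(\,\alpha+t(\alpha'-\alpha)\,)\wedge d\alpha=(1-t)\,\alpha\wedge d\alpha+t\,\alpha'\wedge d\alpha'$ is a convex combination of two positive volume forms, which is in fact cleaner than the paper's route through $\tilde\alpha(X)>0$. Finally, your parenthetical for the disjointness of $\mathcal{C}_\mu^+$ and $\mathcal{C}_\mu^-$ is circular as phrased; the one-line argument is that the helicity $\int_M\alpha\wedge d\alpha$ is independent of the primitive and is strictly positive for a positive contact primitive and strictly negative for a negative one.
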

Hence, the decomposition \eqref{eq:contdec} is disjoint, i.e. $\mathcal{C}_\mu^+(M)=\bigsqcup_{\xi\in \operatorname{Cont}^+(M)} \mathcal{C}_\mu^+(M,\xi)$. The two-form $\iota_X\mu$ has several primitives that might or not be contact forms, but each contact primitive defines the same contact structure up to isotopy.
\begin{proof}
Let $X$ be an exact divergence-free vector field in $ \mathcal{C}_\mu^+$, and $Y$ a $C^0$-close exact divergence-free vector field. Hence $\iota_X\mu=d\alpha$ for some positive contact form $\alpha$, and writing $\iota_Y\mu=d\beta$ we know that
$$ \norm{d\alpha-d\beta}_{C^0}<\delta. $$ 
The two-form $d\alpha-d\beta$ is an exact $C^0$-small two-form, so by Lemma \ref{lem:C0bound}, we construct a $C^0$-small one-form $\gamma$ such that $d\gamma=d(\alpha-\beta)$. It follows that $\eta=\alpha-\gamma$ satisfies $d\eta=d\beta$ and $\eta\wedge d\eta$ is $C^0$-close to $\alpha\wedge d\alpha$. Hence if $\delta$ is small enough, the one-form $\eta$ satisfies $\eta\wedge d\eta>0$, and since $\iota_Y\mu=d\eta$ we deduce that $Y\in C^+_\mu$.\\

For $\delta$ small enough the linear interpolation between $\alpha$ and $\eta$ is a path of contact forms. Indeed $\alpha_t= (1-t) \alpha + t\eta$ satisfies
\begin{align*}
    \alpha_t\wedge d\alpha_t&= (1-t)^2\alpha\wedge d\alpha + t(1-t)\alpha\wedge d\eta+ (1-t)t\eta \wedge d\alpha + t^2 \alpha \wedge d\alpha,
\end{align*}
and since $\eta$ is $C^0$-close to $\alpha$ and $d\eta$ is $C^0$-close to $d\eta$, each term above is a non-negative multiple of $\alpha\wedge d\alpha$. We deduce that $\alpha_t\wedge d\alpha_t>0$, which shows that $\ker \alpha$ and $\ker \eta$ are homotopic through contact structures. Gray stability then shows that these contact structures are isotopic, thus proving the second statement. 

For the third statement, assume that $\iota_X\mu=d\alpha=d\tilde \alpha$ for two different contact forms $\alpha,\tilde \alpha$. Then we know that $X$ lies in the one-dimensional kernel of $d\alpha$ and $d\tilde \alpha$, and that $\alpha(X)>0$ and $\tilde \alpha(X)\neq 0$. On the other hand, the helicity does not depend on the primitive so $\int_M \tilde \alpha \wedge d\alpha >0$, and thus $\tilde \alpha(X)>0$. In particular, the three-forms $\alpha\wedge d\alpha$, $\alpha\wedge d\tilde \alpha$, $\tilde \alpha \wedge d\alpha$ and $\tilde \alpha \wedge d\tilde\alpha$ are all volume forms defining the same orientation. Thus we can argue exactly as we did before to show that the linear interpolation $\alpha_t= (1-t)\alpha + t\tilde \alpha$ defines a path of contact forms. This shows that $\ker \alpha$ and $\ker \tilde \alpha$ are homotopic through contact forms, and hence contact isotopic by Gray's stability. This shows if $Y\in C_\mu^+(M,\xi)\cap C_\mu^+(M,\xi')$, then $\xi$ and $\xi'$ are in the same isotopy class of contact structures.
\end{proof}

Observe that every vector field in $\mathcal{C}_\mu^{+}(M)$ has positive helicity, and every vector field $\mathcal{C}_\mu^{-}(M)$ has negative helicity. In subsection \ref{ss:obs}, we will introduce more dynamical properties of an exact divergence-free vector field that can be useful to find other $C^0$-open sets in $\mathfrak{X}_\mu^0(M)$.

\subsection{Contact type solutions of the Euler equation}\label{ss:contacttype}

Observe that if $X \in \mathcal{C}^{+}_{\mu}(M, \xi)$, then for any volume preserving diffeomorphism $F: M \rightarrow M$ isotopic to the identity, we have that $F_{*} X \in \mathcal{C}^{+}_{\mu}(M, \xi)$ (and analogously for negative contact type fields). Indeed, we have that $\iota_X\mu=d\alpha$ for a contact form $\alpha$ whose kernel is the contact structure $\xi$, so
\begin{align}\label{eq:contactinvariance}
\begin{split}
    \iota_{F_{*} X}\mu&=F^*(\iota_{Y}\mu)\\
    &=d F^*\alpha 
    \end{split}
\end{align} and $F^*\alpha$ is a contact form for the contact structure $F^{*} \xi$, which is in the same isotopy class as $\xi$. 

Therefore, if an initial velocity field $u_{0}$ has vorticity $\curl u_{0} \in \mathcal{C}_{\mu}(M, \xi)$, the time-dependent solution of the Euler equation $u_t$ with initial condition $u_0$ will satisfy at any time that $\curl u_t \in \mathcal{C}_{\mu}(M, \xi)$ because of Helmholtz's transport of vorticity (Eq. \eqref{eq:transport}). This motivates the following definition.

\begin{defi}\label{def:contactsolution}
    A solution $u_t$ to the Euler equations in $(M,g)$ whose curl is non-vanishing is called of \textit{contact type} if its curl is of contact type. 
\end{defi}

Analogously, we say that an initial condition $u_0\in \mathfrak{X}_\mu(M)$ is of contact type if the curl of $u_0$ is of contact type. 

\begin{Remark}\label{rem:rotBelt}
Notice that if $\alpha=g(u_0,\cdot)$ is a contact form, then $u_0$ is a contact type initial condition, but that in general a contact type initial condition $u_0$ will not satisfy that $\alpha$ is precisely one of the contact primitives of $d\alpha$. If $X$ is a stationary solution that happens to be a rotational Beltrami field, i.e. $\curl X= fX$ with $f\in C^\infty(M)$ a non-vanishing function, then $g(X,\cdot)$ is indeed a contact form. This includes curl eigenvectors with a non-zero eigenvalue. Thus, rotational Beltrami fields are a particular case of (stationary) contact type solutions of the Euler equations.
\end{Remark}

In the next subsection, we introduce a criterion that ensures that a divergence-free field is neither in $\mathcal{C}^{+}_{\mu}(M)$ nor in $\mathcal{C}_\mu^-(M)$. A key property of this criterion is its stability under $C^{0}$ small perturbations, which will be instrumental in Section \ref{s:main} to find $C^{1}$-open sets of solutions to the Euler equation that are not of contact type.



\subsection{Cieliebak's criterion}\label{ss:obs}

Geometric currents are a very useful tool to characterize the geometric properties of differential forms \cite{Su}. A $k$-current is a continuous linear function defined on $\Omega^k(M)$, and we denote by $\mathcal{Z}^k(M)$ the space of $k$-currents, that we endow with the weak$^{*}$ topology. There exists a continuous boundary operator 
$$\partial: \mathcal{Z}^k(M) \rightarrow \mathcal{Z}^{k-1}(M),$$
defined by $\partial c(\omega)=c(d\omega)$, where $c$ is a $k$-current, $\omega$ is a $(k-1)$-form and $d$ is the standard exterior derivative of forms. Thanks to the boundary operator, which satisfies $\partial^2=0$, one can speak of exact and closed currents.

Given a non-vanishing vector field $X$ and a point $p\in M$, the Dirac $1$-current $\delta^{X}_p\in \mathcal{Z}^1(M)$ evaluates a form on $X|_p$, i.e.
$$\delta^{X}_p(\beta)=\beta(X)|_p, \enspace \beta \in \Omega^1(M).$$ A $1$-current $z\in \mathcal{Z}^1(M)$ is called a foliation current of $X$ if it lies in the closed convex cone generated by its Dirac $1$-currents. A foliation cycle refers to a closed foliation $1$-current, which could be exact as well. \\

Using this language, one can restate a characterization of contact type two-forms due to McDuff \cite{Mc} and Sullivan \cite{Su} in our context of exact divergence-free vector fields.

\begin{theorem}[\cite{Mc}]
Let $X$ be a non-vanishing vector field preserving a volume form $\mu$ and such that $\iota_X\mu$ is exact. Then $X\in \mathcal{C}_\mu$ if and only if given any primitive $\alpha$ of $\iota_X\mu$ and any exact foliation cycle $z$ of $X$ we have $z(\alpha)\neq 0$.
\end{theorem}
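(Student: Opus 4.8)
The plan is a functional-analytic duality argument --- essentially that of McDuff and Sullivan --- carried out in the language set up above. I would work in the Fr\'echet space $\Omega^1(M)$ with its $C^\infty$-topology, whose continuous dual is $\mathcal Z^1(M)$, and first record the elementary reformulation of what is to be proved. Write $\eta:=\iota_X\mu$; since $X$ is nowhere zero, $\eta$ is nowhere zero with pointwise kernel $\langle X\rangle$. A primitive $\alpha$ of $\eta$ satisfies $\alpha\wedge d\alpha=\alpha\wedge\iota_X\mu=\alpha(X)\,\mu$ (apply $\iota_X$ to the identically vanishing $4$-form $\alpha\wedge\mu$), so $\alpha$ is a contact form precisely when $\alpha(X)$ is nowhere zero, i.e. of constant sign since $M$ is connected, and it is then positive or negative according to that sign. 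Hence $X\in\mathcal C_\mu$ if and only if the affine set $A\subset\Omega^1(M)$ of primitives of $\eta$ (an affine subspace whose underlying linear space is the closed $1$-forms) meets one of the two nonempty open convex cones $\mathcal P^\pm:=\{\beta:\pm\beta(X)>0\text{ everywhere}\}$; note $X^\flat\in\mathcal P^+$. I would also note that for an exact current $z=\partial w$ one has $z(\alpha)=w(\eta)$, independent of the chosen primitive $\alpha$.

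For the ``only if'' direction I would use that the cone $K\subset\mathcal Z^1(M)$ of foliation currents of $X$ is the cone over the weak-$*$ compact convex base $\mathcal B:=\{z_\nu:\nu\in\mathrm{Prob}(M)\}$, where $z_\nu(\beta)=\int_M\beta(X)\,d\nu$; since $X$ is nowhere zero, $0\notin\mathcal B$, so $K$ is weak-$*$ closed (cone over a compact convex base avoiding the origin) and pointed, $K\cap(-K)=\{0\}$. If $\alpha$ is, say, a positive contact primitive of $\eta$, then any nonzero $z\in K$ equals $t\,z_\nu$ with $t>0$, whence $z(\alpha)=t\int_M\alpha(X)\,d\nu\ge t\min_M\alpha(X)>0$. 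In particular every nonzero exact foliation cycle pairs non-trivially with $\alpha$ (and with every other primitive, as it is exact). The negative contact type case is identical.

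For the ``if'' direction I would argue by contraposition: assuming $X\notin\mathcal C_\mu$, so $A\cap\mathcal P^+=A\cap\mathcal P^-=\emptyset$, I would produce a nonzero exact foliation cycle $z$ with $z(\alpha)=0$. Separating the convex set $A$ from the nonempty open convex cone $\mathcal P^+$ via Hahn--Banach yields a nonzero $z_+\in\mathcal Z^1(M)$ and $c\in\mathbb R$ with $z_+\le c$ on $\mathcal P^+$ and $z_+\ge c$ on $A$ (after possibly replacing $z_+$ by $-z_+$). Being bounded below on the affine set $A$, $z_+$ vanishes on all closed $1$-forms; thus $\partial z_+=0$ and, by the de Rham theorem for currents (a closed $1$-current annihilating the closed $1$-forms is null-homologous), $z_+$ is exact. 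Being bounded above on the cone $\mathcal P^+$, $z_+$ is $\le 0$ there, hence $\le 0$ on $\overline{\mathcal P^+}=\{\beta:\beta(X)\ge 0\}$ (approximate $\beta$ by $\beta+\tfrac1n X^\flat$); as $\overline{\mathcal P^+}$ is exactly the dual cone of $K$, the bipolar theorem gives $-z_+\in K$. Letting $\beta\to 0$ inside $\mathcal P^+$ forces $c\ge 0$, so $w_+:=-z_+$ is a nonzero exact foliation cycle with $w_+(\alpha)\le 0$; the analogous argument with $\mathcal P^-$ gives a nonzero exact foliation cycle $w_-$ with $w_-(\alpha)\ge 0$. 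If either pairing vanishes, I am done. Otherwise the segment $\{(1-t)w_++t\,w_-:t\in[0,1]\}$ lies in the convex cone $\mathcal F^{ex}:=K\cap\{\text{exact currents}\}$ and the affine scalar function $t\mapsto(1-t)w_+(\alpha)+t\,w_-(\alpha)$ changes sign, so it vanishes at some interior point $w_*$; moreover $w_*\ne 0$, for $w_*=0$ would place $w_+$ in $K\cap(-K)=\{0\}$. This $w_*$ is the sought nonzero exact foliation cycle with $w_*(\alpha)=0$.

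The main obstacle I anticipate is not a single hard estimate but the soft-analysis bookkeeping: showing $K$ is weak-$*$ closed and pointed through its compact base $\mathcal B$; applying the bipolar theorem to identify $K$ with the second dual cone of $\{\beta:\beta(X)\ge 0\}$; and invoking the de Rham theorem for currents to pass from ``$z_+$ kills all closed $1$-forms'' to ``$z_+$ is exact''. These are the load-bearing facts; once they are in hand, the Hahn--Banach separation and the sign analysis are routine. They are exactly the ingredients in McDuff's and Sullivan's treatment, which I would cite and adapt rather than reprove.
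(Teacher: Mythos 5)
Your argument is correct: the reduction of the contact condition to the affine set of primitives meeting the open cones $\mathcal P^{\pm}$, the identification of the foliation-current cone with the cone over the compact base of probability measures, and the Hahn--Banach/bipolar/de Rham steps are exactly the Sullivan--McDuff duality proof, and the sign-interpolation at the end to produce a nonzero exact cycle annihilating $\alpha$ is sound (note that, as in the original sources, ``exact foliation cycle'' must be read as \emph{nonzero}, since the zero current trivially pairs to zero). The paper itself offers no proof of this statement --- it is quoted from McDuff and Sullivan --- so your proposal is a faithful reconstruction of the cited argument rather than an alternative to anything in the text.
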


A particularly interesting example of a foliation cycle is given by a periodic orbit of $X$, oriented by the positive direction of the flow. Indeed, such an orbit $\gamma$ defines a $1$-current by integration $\gamma(\alpha)=\int_{\gamma}\alpha$.
We deduce the following simple criterion, used by Cieliebak in \cite{Cie}.

\begin{corollary}\label{cor:nonR}
Let $X\in \mathfrak{X}_\mu^0(M)$ be an exact divergence-free vector field. If $X$ has two null-homologous closed orbits $\gamma_1, \gamma_2$ such that
$$ \int_{\gamma_1}\alpha>0, \qquad \int_{\gamma_2}\alpha<0,$$
for some $\alpha$ such that $\iota_X\mu=d\alpha$, then $X\not \in \mathcal{C}_\mu$.
\end{corollary}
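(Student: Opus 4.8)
The plan is to deduce this directly from the characterization of contact type fields due to McDuff (\cite{Mc}) quoted above. That result says $X\in\mathcal{C}_\mu$ precisely when $z(\alpha)\neq 0$ for every primitive $\alpha$ of $\iota_X\mu$ and every exact foliation cycle $z$ of $X$; so to conclude $X\notin\mathcal{C}_\mu$ it suffices to exhibit a single exact foliation cycle $z$ of $X$ and a single primitive $\alpha$ of $\iota_X\mu$ with $z(\alpha)=0$. The hypotheses give us two ``almost-good'' candidates, $\gamma_1$ and $\gamma_2$, on which the pairing has opposite signs; the idea is to interpolate between them inside the cone of foliation currents and invoke the intermediate value theorem.

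First I would record that each oriented periodic orbit $\gamma_i$ is an exact foliation cycle of $X$. It is a foliation current because, as noted just before the statement, a periodic orbit traversed in the flow direction lies in the closed convex cone generated by the Dirac $1$-currents $\delta_p^X$ along it; it is a cycle because $\partial\gamma_i(\omega)=\int_{\gamma_i}d\omega=0$ by Stokes' theorem; and it is an \emph{exact} current because $\gamma_i$ is null-homologous, hence bounds a $2$-chain, i.e. there is a $2$-current $w_i$ with $\partial w_i=\gamma_i$ (concretely $w_i(\beta)=\int_{\Sigma_i}\beta$ for a surface $\Sigma_i$ with $\partial\Sigma_i=\gamma_i$; this also shows $\int_{\gamma_i}\alpha=\int_{\Sigma_i}\iota_X\mu$ is independent of the primitive $\alpha$).

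Next I would interpolate. Since the set of foliation currents of $X$ is by definition a convex cone, for each $t\in[0,1]$ the current
$$ z_t := (1-t)\,\gamma_2 + t\,\gamma_1 $$
is again a foliation current; it is a cycle since $\partial z_t=(1-t)\partial\gamma_2+t\,\partial\gamma_1=0$, and it is exact since $z_t=\partial\big((1-t)w_2+t\,w_1\big)$. Thus $z_t$ is an exact foliation cycle of $X$ for every $t$ (and $z_t\neq 0$ for $t\in(0,1)$, noting that $\gamma_1\neq\gamma_2$ since their $\alpha$-integrals have opposite signs). Now fix a primitive $\alpha$ with $\iota_X\mu=d\alpha$ and consider the affine function
$$ f(t):=z_t(\alpha)=(1-t)\!\int_{\gamma_2}\!\alpha \;+\; t\!\int_{\gamma_1}\!\alpha, $$
which satisfies $f(0)=\int_{\gamma_2}\alpha<0$ and $f(1)=\int_{\gamma_1}\alpha>0$ by hypothesis. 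By the intermediate value theorem there is $t_*\in(0,1)$ with $f(t_*)=0$, so $z_{t_*}$ is an exact foliation cycle of $X$ with $z_{t_*}(\alpha)=0$, and McDuff's criterion forces $X\notin\mathcal{C}_\mu$.

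The argument has essentially no analytic content; the only points needing a small amount of care are that all three defining properties of an ``exact foliation cycle'' — membership in the convex cone of Dirac currents, being $\partial$-closed, and being $\partial$-exact — are stable under convex combinations (the first by convexity of the cone, the latter two by linearity of $\partial$), and that a null-homologous periodic orbit is genuinely an exact $1$-current. Neither is a real obstacle, so I would not expect any serious difficulty; the main ``trick'' is simply realizing that the convex combination $z_{t_*}$ stays within the class to which McDuff's theorem applies. (As a remark, since the $\gamma_i$ are exact, $\int_{\gamma_i}\alpha$ is independent of the chosen primitive, so the ``for some $\alpha$'' in the statement could equivalently read ``for all $\alpha$''.)
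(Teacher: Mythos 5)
Your proof is correct and follows exactly the paper's own (one-line) argument: the paper likewise observes that a suitable linear combination of $\gamma_1$ and $\gamma_2$ is an exact foliation cycle annihilating $\alpha$, so McDuff's criterion applies. Your write-up merely makes explicit the verifications (convexity of the cone, linearity of $\partial$, exactness from null-homologousness, nontriviality of the combination) that the paper leaves implicit.
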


Indeed, if there exist such periodic orbits and primitive $\alpha$, a linear combination of $\gamma_1$ and $\gamma_2$ defines an exact foliation cycle such that $z(\alpha)=0$. In \cite{Cie}, Cieliebak proved that there exist embeddings of $S^3$ in $(\mathbb{R}^4,\omega_{std})$ such that a section of the kernel of the induced two-form on $S^3$ satisfies the conditions above, and that any $C^0$-perturbation of the embedding induces a two-form also satisfying these properties. In other words, the embedded hypersurface cannot be $C^0$-approximated by contact hypersurfaces (hypersurfaces such that the two-form induced by the ambient symplectic form is of contact type). Note that a $C^0$-perturbation of the embedding can drastically change the characteristic foliation given by the kernel of the two-form, so a priori dynamical methods are not enough and methods from symplectic geometry, like Floer theory, are required for his proof. One of the key properties of the closed orbits that he considers is that they are non-degenerate. Recall that a periodic orbit is non-degenerate if no eigenvalue of the linearized Poincar\'e map of the periodic orbit is a root of the unity. \\

 What we use in this work is an adaptation of Cieliebak's obstruction for perturbations in the space of exact divergence free flows with respect to a fixed volume-form $\mu$. The criterion turns out to be $C^0$-robust in the space of exact flows on any closed three-manifold. In our setting, the perturbation is small at a dynamical level (the flow) and dynamical arguments are sufficient for the proof.
 
\begin{theorem}\label{thm:C0obs}
Let $X$ be as in Corollary \ref{cor:nonR} such that $\gamma_1$ and $\gamma_2$ are non-degenerate periodic orbits of $X$. Then there exists $\delta_0>0$ such that any vector field $Y\in \mathfrak{X}_\mu^0$ satisfying
\begin{equation}
\norm{X-Y}_{C^0} < \delta_0.
\end{equation}
satisfies $Y\not \in \mathcal{C}_\mu$.
\end{theorem}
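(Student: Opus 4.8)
The plan is to show that the three hypotheses of Corollary \ref{cor:nonR} — existence of two null-homologous periodic orbits and a primitive $\alpha$ with $\int_{\gamma_1}\alpha>0$, $\int_{\gamma_2}\alpha<0$ — survive a sufficiently small $C^0$ perturbation of $X$ inside $\mathfrak{X}^0_\mu$. The only nontrivial point is the survival of the periodic orbits, since once we have nearby periodic orbits $\gamma_i^Y$ of $Y$ whose period and location are close to those of $\gamma_i$, everything else follows by continuity. First I would invoke non-degeneracy: for each $i$, the linearized Poincaré return map of $\gamma_i$ has no eigenvalue equal to $1$, so the fixed point of the Poincaré map on a transverse disk is transversally nondegenerate. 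The implicit function theorem (applied to the Poincaré map, which depends continuously — indeed smoothly — on the vector field in $C^1$, hence in particular under $C^0$-small perturbations that a fortiori we may take $C^1$-small, but we must be careful here) then produces, for all $Y$ $C^0$-close to $X$, a unique nearby periodic orbit $\gamma_i^Y$, which moreover is still null-homologous because it is $C^0$-close to $\gamma_i$ and homology classes of loops are locally constant.

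Here lies the main subtlety, and the main obstacle: the Poincaré map is a well-defined tool only for $C^1$ (or better) vector fields and depends continuously on the field in the $C^1$-topology, not the $C^0$-topology — so a naive application of the implicit function theorem to a merely $C^0$-close $Y$ is not legitimate. I expect the paper circumvents this by exploiting that $Y$ lies in $\mathfrak{X}^0_\mu$, the space of exact divergence-free fields: one can pass through the curl/inverse-curl correspondence so that a $C^0$-small perturbation of the vorticity-type object is controlled, or — more likely — one uses a persistence-of-hyperbolic/elliptic-orbit argument that is genuinely $C^0$-robust at the level of the \emph{flow} rather than the vector field. Concretely, the sentence ``the perturbation is small at a dynamical level (the flow)'' suggests the following: a $C^0$-bound on $X-Y$ gives, on the compact time interval $[0,T]$ with $T$ slightly larger than the periods of $\gamma_1,\gamma_2$, a $C^0$-bound on the difference of the time-$t$ flows by Grönwall (using that $X$ is fixed and Lipschitz). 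A nondegenerate periodic orbit is, in particular, an isolated fixed point of the time-$T$-Poincaré section map with nonzero fixed-point index; the index is a $C^0$-robust invariant of continuous maps, so the perturbed return map of $Y$ still has a fixed point near $\gamma_i(0)$ in the transverse disk, yielding a periodic orbit $\gamma_i^Y$ of $Y$ of period close to that of $\gamma_i$ and lying in a small tube around $\gamma_i$.

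Granting the persistence of $\gamma_1^Y, \gamma_2^Y$, I would finish as follows. Choose once and for all the primitive $\alpha$ of $\iota_X\mu$ provided by the hypothesis, with $\int_{\gamma_1}\alpha = a_1 > 0$ and $\int_{\gamma_2}\alpha = a_2 < 0$. Write $\iota_Y\mu = d\beta$; then $d(\beta-\alpha) = \iota_{Y-X}\mu$ is an exact two-form with $\|\iota_{Y-X}\mu\|_{C^0} \le C\,\|Y-X\|_{C^0} < C\delta_0$, so by Lemma \ref{lem:C0bound} there is a primitive $\gamma$ of $d(\beta-\alpha)$ with $\|\gamma\|_{C^0} \le C'\delta_0$; replacing $\beta$ by $\beta - \gamma$ we may assume $\iota_Y\mu = d\beta$ with $\|\beta - \alpha\|_{C^0} \le C'\delta_0$. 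Now estimate, for $i=1,2$,
\begin{equation*}
\left| \int_{\gamma_i^Y}\beta - \int_{\gamma_i}\alpha \right| \le \left| \int_{\gamma_i^Y}(\beta-\alpha) \right| + \left| \int_{\gamma_i^Y}\alpha - \int_{\gamma_i}\alpha \right|.
\end{equation*}
The first term is bounded by $\|\beta-\alpha\|_{C^0}\,\operatorname{length}(\gamma_i^Y) \le C'\delta_0 \cdot 2\operatorname{length}(\gamma_i)$ for $\delta_0$ small, and the second tends to $0$ as $\gamma_i^Y \to \gamma_i$ in $C^1$ (again a consequence of the implicit-function/index construction, which controls not just the location but the tangent directions of the orbit). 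Hence for $\delta_0$ small enough, $\int_{\gamma_1^Y}\beta > 0$ and $\int_{\gamma_2^Y}\beta < 0$, and $\gamma_1^Y,\gamma_2^Y$ are null-homologous closed orbits of $Y$. Corollary \ref{cor:nonR} then gives $Y\notin \mathcal{C}_\mu$, completing the proof. The one step I would flag as requiring genuine care rather than routine verification is the $C^0$-robust persistence of the nondegenerate periodic orbits together with $C^1$-control of their location, since the tools that make this work (Poincaré map, implicit function theorem) are classically $C^1$-phenomena and the statement is asserted for $C^0$-perturbations.
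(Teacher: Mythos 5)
Your proposal is correct and follows essentially the same route as the paper: a Gr\"onwall estimate gives $C^0$-closeness of the flows over a fixed time interval, the non-vanishing fixed-point index of the return map (a $C^0$-robust invariant, which the paper invokes via \cite[Theorem 8.4.4]{KH}) yields nearby null-homologous periodic orbits of $Y$ with $C^1$-control, and Lemma \ref{lem:C0bound} produces a $C^0$-small correction of the primitive so that the two integrals keep their signs, whence Corollary \ref{cor:nonR} applies. The subtlety you flag about the implicit function theorem being a $C^1$-phenomenon is exactly why the paper uses the index/degree argument instead, so your final argument matches the paper's.
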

\begin{proof}
For some small $\delta>0$, let $Y$ be any vector field such that 
\begin{equation}\label{eq:vfs}
\norm{X-Y}_{C^0} < \delta.
\end{equation}
Denote by $\varphi_X^t:M \rightarrow M$ and $\varphi_Y^t:M \rightarrow M$ the flows defined by $X$ and $Y$ respectively. We will first show that for $\delta$ small enough, the vector field $Y$ admits closed orbits $\tilde \gamma_1, \tilde \gamma_2$ that are arbitrarily $C^1$-close to $\gamma_1$ and $\gamma_2$ respectively.\\

Choose a tubular neighborhood $U$ of $\gamma_1$, and $D'$ a disk transverse to $X$ centered at a point $x_0\in \gamma_1$. There exists a smaller disk $D\subset D'$ such that the first-return map of $X$ on $D$ is well-defined and with image in $D'$. Denote by $\tau:D\longrightarrow \mathbb{R}$ the first-return time at a point $x\in D$, and by choosing $D$ small enough we assume that $\varphi_X^t(x)\subset U$ for all $t\in [0,\tau(x)]$, for each $x\in D$. Fixing any small enough $\varepsilon>0$, we can choose $\delta$ smaller than some $\delta_0$ such that the flow of $Y$ satisfies as well that the first-return map is well defined in $D$, with image in $D'$ and if $\tilde \tau: D \longrightarrow \mathbb{R}$ denotes the first return time of the flow of $Y$, we have
$$ |\tilde \tau(x) - \tau(x)| < \varepsilon, \quad \text{for each } x\in D. $$
Let $s(t):\mathbb{R}_{\geq 0} \longrightarrow M$ be the solution to the ODE defined by $X$ with initial condition $s(0)=x_0$. Similarly, let $\tilde s(t)$ be the solution to the ODE defined by $Y$ with initial condition $x_0$. Triangle inequality yields
\begin{equation}
\frac{d}{dt} \norm{\tilde s(t)- s(t)}_{C^0} \leq \norm{X(\tilde s(t))-X(s(t))}_{C^0} + \delta.
\end{equation} 
Let $K>0$ be a uniform upper bound of $\norm{X}_{C^0}$ in $U$, then by the mean value theorem
\begin{equation}
\frac{d}{dt} \norm{\tilde s(t)-s(t)}_{C^0} < K\norm{\tilde s(t)-s(t)}_{C^0}+\delta.
\end{equation}
We apply Gronwall's inequality and deduce
\begin{equation}
\norm{\tilde s(t)-s(t)}_{C^0} < \frac{\delta}{K} (e^{K.t}-1),
\end{equation}
hence
\begin{equation}
\norm{\tilde s(t)-s(t)}_{C^0} < \frac{\delta}{K} (e^{K.(\tau(x_0)+\varepsilon)}-1), \text{ for } t\in [0,\tau+\varepsilon].
\end{equation}
Given some small $\delta'$, we can choose $\delta < \delta' \frac{K}{3(e^{K.(\tau(x_0)+\varepsilon)}-1)}$, then 
\begin{equation}\label{eq:intcurves}
\norm{\tilde s(t)-s(t)}_{C^0} < \frac{\delta'}{3}, \text{ for } t\in [0,\tau(x_0)+\varepsilon].
\end{equation}
We can estimate the $C^1$-distance between $s$ and $\tilde s$ as
\begin{align}\label{eq:C1norm}
\begin{split}
\norm{\tilde s(t)-s(t)}_{C^1} &= \norm{\tilde s(t)-s(t)}_{C^0} + \norm{Y(\tilde s(t)) - X(s(t))}_{C^0}\\
&\leq \norm{\tilde s(t)-s(t)}_{C^0} + \norm{Y(\tilde s(t)) - X(\tilde s(t))}_{C^0} \\
&+ \norm{X(\tilde s(t)) - X( s(t))}_{C^0}.
\end{split}
\end{align}
On the other hand, by continuity, there is some $C>0$ such that 
\begin{equation}\label{eq:contX}
\norm{X_p-X_q}_{C^0}<C \norm{p-q}_{C^0}
\end{equation}
 for all $p,q \in M$. We might have chosen $\delta'< \frac{\tilde{\delta}}{C}$, for any given positive small $\tilde \delta$, and we can assume as well that we chose $\delta<\frac{\tilde \delta}{3}$. Using Equations \eqref{eq:vfs},  \eqref{eq:intcurves} and \eqref{eq:contX} in Equation \eqref{eq:C1norm} we deduce that for $t\in [0,\tau(x_0)+\varepsilon)$
 \begin{align}
 \norm{\tilde s(t)- s(t)}_{C^1} &< \frac{\delta'}{3} + \delta + \frac{\tilde \delta}{3}\\
 &< \frac{\tilde \delta}{3} + \frac{\tilde \delta}{3} + \frac{\tilde \delta}{3} \\ 
 &<\tilde \delta. \label{eq:boundIntcurv}
 \end{align}
We will apply the previous computations to a certain trajectory of $Y$. The first-return map of $X$ admits an isolated fixed point $x_0$ whose degree is non-zero by assumption since $\gamma_1$ is a non-degenerate periodic orbit of $X$. By \cite[Theorem 8.4.4]{KH}, we know that given a disk in $\hat D \subset D$ of small radius $\hat \delta$ (the radius depends on $\delta$ and can be made arbitrarily small by decreasing $\delta$) centered at $x_0$, the first return map of $Y$, which is $C^0$-close to the first return map of $X$ by Equation \eqref{eq:boundIntcurv}, has some fixed point $\tilde x_0$ in $\hat D$. In other words, the integral curve $\hat s(t)$ of $Y$ with initial condition $\tilde x_0$ is periodic with period $\tilde \tau(\tilde x_0)$, which can be assumed to be smaller than $\tau(x_0)+\varepsilon$. The point $\tilde x_0\in D$ is $\delta'$-close to $x_0$, hence if $\delta$ is small enough continuity with respect to initial conditions shows that
\begin{equation}
\norm{\hat s(t) - s(t)}_{C^1} < \tilde \delta, \quad \text{for } t\in [0,\tau(x_0)+\varepsilon],
\end{equation}
for an arbitrarily small $\tilde \delta$. Since both $s(t)$ and $\hat s(t)$ are periodic with period smaller than $\tau(x_0)+\varepsilon$, we deduce that the closed orbit $\tilde \gamma_1$ of $Y$ through $\tilde x_0$ is $\tilde \delta$-close to $\gamma_1$ in the $C^1$-topology. The same argument applies to $\gamma_2$, so if $\delta$ is small enough we have proven the existence of closed orbits $\tilde \gamma_1$ and $\tilde \gamma_2$ of $Y$ that are $C^1$-close to $\gamma_1$ and $\gamma_2$ respectively.\\

By hypothesis, both $X$ and $Y$ are exact divergence-free, which implies that $\iota_X\mu$ and $\iota_Y\mu$ are exact. Let us argue that $\iota_Y\mu$ admits a primitive that is $C^0$-close to a primitive of $\iota_X\mu$. Given some $\epsilon>0$, if $\delta$ is small enough then we know that
\begin{equation}
\norm{\iota_Y\mu -\iota_X\mu}_{C^0} <\epsilon.
\end{equation}
Hence writing $\iota_X\mu=d\alpha$ and $\iota_Y\mu=d\beta$, the previous inequality can be written as
\begin{equation}
\norm{d\beta - d\alpha}_{C^0} < \epsilon.
\end{equation}
This means that $d\beta-d\alpha$ is an exact $C^0$-small two-form, so applying Lemma \ref{lem:C0bound} we deduce that it admits a $C^0$-small primitive $\eta$. Then $\iota_Y\mu= d(\alpha+\eta)$ and by taking $\delta$ small enough the closed orbits $\tilde \gamma_1$ and $\tilde \gamma_2$ are very close to $\gamma_1$ and $\gamma_2$ in the $C^1$-norm, and the one-form $\alpha+\eta$ is $C^0$-small. We can find some $\delta_0>0$ such that if $\delta<\delta_0$, we have
\begin{equation}
\int_{\tilde \gamma_1 } \alpha+\eta >0, \quad \int_{\tilde \gamma_2} \alpha + \eta <0,
\end{equation}
since $\int_{\gamma_1}\alpha>0$ and $\int_{\gamma_2}\alpha<0$.
Corollary \ref{cor:nonR} implies that $d(\alpha+\eta)$ is not of contact type and thus  $Y\not \in \mathcal{C}_\mu$.
\end{proof} 
This criterion will be instrumental in finding $C^0$-open sets of exact divergence-free fields that are not of contact type.

\section{Initial conditions with prescribed data}\label{s:data}

Let $(M,g)$ be a closed Riemannian three-manifold. In this section, we will construct volume-preserving vector fields whose curl has some prescribed properties and that have any large enough value of the energy.

\subsection{Vorticity fields with prescribed helicity}
First, we will construct exact divergence-free vector fields that are of contact type or robustly not of contact type (as in Theorem \ref{thm:C0obs}) in any homotopy class and with any given non-zero value of the helicity. These will be, potentially, the vorticities of the initial data that we want to consider in the Euler equations. Since we only deal with helicity, through this section we only require fixing a volume form in $M$. First, we construct contact type exact divergence-free vector fields with given helicity on each homotopy class of non-vanishing vector fields. 

\begin{lemma}\label{lem:Hreeb}
Let $M$ be a closed three-manifold equipped with a volume form $\mu$. For any homotopy class of non-vanishing vector fields and real number $h\neq 0$, there is an exact divergence-free vector field $Y$ of contact type such that $\mathcal{H}(Y)=h$.
\end{lemma}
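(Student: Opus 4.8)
The plan is to realize the required vorticity $Y$ \emph{directly} as the vector field determined by $\iota_Y\mu=d\alpha$ for a suitably chosen contact form $\alpha$, rather than going through Reeb fields and a Moser argument. For any contact form $\alpha$ on $M$ the equation $\iota_Y\mu=d\alpha$ has a unique smooth solution $Y$, since $\iota_{\bullet}\mu\colon TM\to\Lambda^{2}T^{*}M$ is a vector bundle isomorphism; this $Y$ is automatically exact divergence-free and of contact type, with $\alpha$ a contact primitive, and $\mathcal{H}(Y)=\int_M\alpha\wedge d\alpha$. Because $c\alpha$ is again a contact form for every $c>0$, with $\int_M(c\alpha)\wedge d(c\alpha)=c^{2}\int_M\alpha\wedge d\alpha$ (and rescaling $\alpha$ replaces $Y$ by $cY$, which stays non-vanishing and in the same homotopy class), once we have a contact form for which this integral has the same sign as $h$ we may rescale it to get $\mathcal{H}(Y)=h$ exactly. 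So the problem reduces to producing a contact form $\alpha$ such that $\operatorname{sign}\int_M\alpha\wedge d\alpha=\operatorname{sign} h$ and the associated field $Y$ lies in the prescribed homotopy class $a$.

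To control the homotopy class, contract the ($4$-form, hence identically zero) $\alpha\wedge\mu$ with $Y$: this gives $\alpha(Y)\,\mu=\alpha\wedge\iota_Y\mu=\alpha\wedge d\alpha$, so $\alpha(Y)=(\alpha\wedge d\alpha)/\mu$ is a nowhere-vanishing function of constant sign, positive precisely when $\alpha$ is a positive contact form. Hence $Y$ is non-vanishing, $Y\oplus\ker\alpha=TM$ at every point, and $Y$ lies on one definite side of $\xi:=\ker\alpha$. Since at each point the vectors on a fixed side of a hyperplane form a convex cone, the space of non-vanishing vector fields transverse to $\xi$ on a given side is path-connected, so the homotopy class of $Y$ as a non-vanishing field depends only on $\xi$ together with that side, i.e.\ only on the associated co-oriented plane field. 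It is therefore enough to choose $\alpha$ so that $\ker\alpha$ lies in $p_a$, co-oriented so that the side occupied by $Y$ is the one which, together with $p_a$, recovers $a$ (and not the class of $-Y$).

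To produce such an $\alpha$ I would invoke realizability of contact structures in prescribed homotopy classes of plane fields: by Lutz--Martinet together with Eliashberg's classification of overtwisted contact structures, every homotopy class of co-oriented $2$-plane fields on a closed oriented three-manifold is represented by a positive contact structure, and, applying this to $M$ with the reversed orientation, also by a negative one. Recall $a$ determines a co-oriented plane field (represented, for any $X\in a$, by a plane field transverse to $X$ and co-oriented so that $X$ is positive; its underlying plane field is the class $p_a$). If $h>0$, choose $\alpha$ a positive contact form whose kernel, co-oriented by $\alpha$, represents this co-oriented plane field; then $\int_M\alpha\wedge d\alpha>0$, and since $\alpha(Y)>0$ the field $Y$ is positively transverse to $\ker\alpha$, hence $Y\in a$. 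If $h<0$, choose $\alpha$ a negative contact form whose kernel, co-oriented by $\alpha$, represents the \emph{opposite} co-oriented plane field; then $\int_M\alpha\wedge d\alpha<0$, and since now $\alpha(Y)<0$ the field $Y$ is positively transverse to $\ker\alpha$ with the co-orientation given by $-\alpha$, which is again the co-oriented plane field attached to $a$, so $Y\in a$ and $Y$ is of negative contact type --- still ``of contact type'' in the sense of the definition. A final rescaling $\alpha\mapsto c\alpha$ by a positive constant, which alters no co-orientation, yields $\mathcal{H}(Y)=h$; note the whole construction uses only $\mu$, never a metric.

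The only genuinely non-elementary ingredient is the existence of a contact structure in an arbitrary homotopy class of plane fields (Lutz--Martinet--Eliashberg); this is where any real difficulty lies, and it is classical. The other point requiring care is the sign and co-orientation bookkeeping of the previous paragraph, ensuring that it is $Y$, and not $-Y$, that falls into $a$ for each sign of $h$; everything else --- non-vanishing, exactness, contact type, and the exact value of the helicity via its quadratic scaling --- is immediate from the single identity $\iota_Y\mu=d\alpha$.
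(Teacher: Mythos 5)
Your proposal is correct and follows essentially the same route as the paper: realize a contact structure in the prescribed homotopy class of plane fields via Lutz--Martinet, observe that the field $Y$ dual to $d\alpha$ is transverse to $\ker\alpha$ (the paper phrases this as $Y$ being a reparametrization of the Reeb field) and hence lies in the desired class, and use the quadratic scaling $\mathcal{H}(cY)=c^2\mathcal{H}(Y)$ together with the choice of a positive or negative contact structure to hit any $h\neq 0$. Your sign and coorientation bookkeeping in the $h<0$ case is somewhat more explicit than the paper's one-line remark, but the substance is identical.
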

\begin{proof}
Choose a homotopy class of vector fields $[u]$. Equip $M$ with an auxiliary Riemannian metric, then the orthogonal complement of $u$ defines a cooriented plane field $\xi_0$. By the works of Lutz and Martinet \cite{Lut,Mar}, there is a homotopy of plane fields $\xi_t, t\in [0,1]$ such that $\xi=\xi_1$ is a contact structure. Let $\alpha$ be a defining one-form of $\xi$, i.e. $\ker \alpha=\xi$. The Reeb field $R$ defined by $\alpha$ is trivially homotopic to $u$: the homotopy $\xi_t$ induces a homotopy $u_t$, by considering a smooth family of sections of the orthogonal bundle of $\xi_t$, and such that $u_0=u$. Since $u_1$ and $R$ are both transverse to $\xi$, they are trivially homotopic as non-vanishing vector fields.  The vector field $Y$ defined by $\iota_Y\mu=d\alpha$ is a reparametrization of $R$, and hence is homotopic to $u$ as well. On the other hand, the helicity of $Y$ is equal to the contact volume 
$$\mathcal{H}(Y)=\int_M \alpha\wedge d\alpha.$$ Thus, by considering the two-form $Cd\alpha$ for an arbitrary non-vanishing constant $C$ we obtain a contact type divergence-free vector field $Y_C$, defined by $\iota_{Y_C}\mu=d(C\alpha)$, whose helicity is 
$$\mathcal{H}(Y_C)=C^2\int_M \alpha\wedge d\alpha.$$
The whole argument works as well by considering a negative contact structure, i.e. one for which $\alpha\wedge d\alpha$ induces the opposite orientation than $\mu$. Hence, choosing an appropriate $C$ we might prescribe any non-zero value of the helicity $\mathcal{H}(Y)$.
\end{proof}

We now establish an analogous result for vector fields satisfying the robust obstruction introduced in Section \ref{ss:obs}.

\begin{prop}\label{prop:Hnonreeb}
Fix some volume form $\mu$ on a closed three-manifold and a real number $h\in \mathbb{R}$. On each homotopy class of non-vanishing vector fields, there exists $X\in \mathfrak{X}_\mu^0$ having two non-degenerate null-homologous closed orbits $\gamma_1,\gamma_2$ such that $\int_{\gamma_1}\alpha>0$ and $\int_{\gamma_2}\alpha<0$, where 
 $\alpha$ is such that $\iota_X\mu=d\alpha$, and such that $\mathcal{H}(X)=h$.
\end{prop}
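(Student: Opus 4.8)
The strategy parallels the proof of Lemma~\ref{lem:Hreeb}: instead of a model contact form, one builds a local model that robustly fails Cieliebak's criterion (Corollary~\ref{cor:nonR}), implants it into a reference field lying in the prescribed homotopy class, and finally tunes the helicity. For the \textbf{local model}, work in cylindrical coordinates $(r,\theta,z)$ on a ball $B\subset\mathbb{R}^3$ and take the axisymmetric divergence-free field $X_{\mathrm{loc}}=u^r\partial_r+u^\theta(r,z)\partial_\theta+u^z\partial_z$ whose poloidal part $(u^r,u^z)$ comes from a stream function $\psi(r,z)$ (so $ru^r=-\partial_z\psi$, $ru^z=\partial_r\psi$) and whose toroidal part $u^\theta$ depends only on $(r,z)$. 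Then $\iota_{X_{\mathrm{loc}}}\mu=d(\psi\,d\theta+F\,dz)$ for a suitable $F(r,z)$; at any critical point $p=(r_\ast,z_\ast)$ of $\psi$ with $u^\theta(p)\neq0$ the $\theta$-circle $\gamma_p$ over $p$ is a periodic orbit with action $\int_{\gamma_p}(\psi\,d\theta+F\,dz)=2\pi\psi(p)$, and, since the poloidal dynamics is autonomous and $p$ is one of its fixed points, the linearised return map of $\gamma_p$ is the time-$(2\pi/u^\theta(p))$ linearised poloidal flow, i.e.\ a rotation by $\omega_p=2\pi\nu_p/u^\theta(p)$ with $\nu_p$ the elliptic frequency of $\psi$ at $p$. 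Choosing $\psi$ with two off-axis \emph{elliptic} critical points $p_1,p_2$ satisfying $\psi(p_1)>0>\psi(p_2)$, equal to $r^2/2$ near $\partial B$ and of the form $r^2g(z)$ with $g>0$ near the axis, and choosing $u^\theta$ supported away from the axis and from $\partial B$ with $\nu_{p_i}/u^\theta(p_i)\notin\mathbb{Q}$, yields a nowhere-vanishing $X_{\mathrm{loc}}$ (off the axis its zeros could only occur where $\psi$ is critical \emph{and} $u^\theta$ vanishes, of which there are none, while on the axis $X_{\mathrm{loc}}=2g(z)\partial_z\neq0$), equal to $\partial_z$ near $\partial B$, with two non-degenerate periodic orbits $\gamma_1=\gamma_{p_1}$, $\gamma_2=\gamma_{p_2}$ whose actions have opposite signs.

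For the \textbf{globalisation}, fix a homotopy class $a$ of non-vanishing vector fields. By Lemma~\ref{lem:Hreeb} there is a contact-type---hence exact divergence-free---field $Y_0$ in a homotopy class $a'$ to be chosen; in a small ball $Y_0$ is $C^0$-close to a constant field, and a $C^0$-small modification (preserving exactness via Lemma~\ref{lem:C0bound}, and preserving the homotopy class) makes it equal to that constant on a sub-ball $B_{\mathrm{glue}}$. Excise $Y_0|_{B_{\mathrm{glue}}}$ and glue in a suitably rescaled copy of $X_{\mathrm{loc}}$. The resulting field $X_1$ is nowhere vanishing, exact divergence-free (every $2$-cycle of $M$ can be displaced off $B_{\mathrm{glue}}$, so $\iota_{X_1}\mu$ has vanishing periods), and carries $\gamma_1,\gamma_2$ as non-degenerate, contractible---hence null-homologous---periodic orbits with actions of opposite sign; in particular $X_1\notin\mathcal{C}_\mu$ by Corollary~\ref{cor:nonR}. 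Moreover $X_1$ agrees with $Y_0$ outside $B_{\mathrm{glue}}$ and inside differs from it by the relative class of $X_{\mathrm{loc}}$ with respect to the constant field, a fixed element of $\pi_3(S^2)$ independent of $Y_0$; choosing $a'$ to be the appropriate shift of $a$ (which has the same Euler class, so Lemma~\ref{lem:Hreeb} still applies) gives $[X_1]=a$.

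It remains to \textbf{prescribe the helicity}. For $h$ of the same sign as $\mathcal{H}(X_1)$ one simply replaces $X_1$ by $cX_1$ with $c=\sqrt{h/\mathcal{H}(X_1)}>0$: this preserves exactness, the homotopy class and non-degeneracy, multiplies every action by $c>0$, and scales the helicity quadratically to $h$. For the remaining values (in particular $h=0$, which no contact-type field and no rescaling can reach) perform a further modification in a ball $B_0$ disjoint from $\gamma_1,\gamma_2$: make $X_1$ equal to a constant field there and add a field $V(t)$ supported in $B_0$, equal for $t\geq0$ to $t$ times a fixed exact divergence-free field $W_+$ with nonnegative vertical component and $\mathcal{H}(W_+)>0$, and for $t\leq0$ to $|t|$ times its mirror image $W_-$ (so $\mathcal{H}(W_-)<0$, still with nonnegative vertical component); such $W_\pm$ exist because the helicity form is not identically zero on the convex cone of localised exact divergence-free fields with nonnegative vertical component. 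Since the vertical component of the modified field stays bounded below by a positive constant it is nowhere zero for every $t$, and the family is connected through the unmodified field, so the homotopy class remains $a$ and $\gamma_1,\gamma_2$ (and their actions) are untouched; the helicity of the modified field is continuous in $t$ with limits $\pm\infty$ as $t\to\pm\infty$ (the $t^2$-term dominates and has the sign of $\mathcal{H}(W_\pm)$), so the intermediate value theorem produces $t^\ast$ with helicity exactly $h$. The corresponding field is the desired $X$.

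\textbf{Main difficulty.} The delicate points are, first, producing \emph{non-degenerate} orbits rather than the one- or two-parameter families of closed orbits that any symmetric model (e.g.\ a Lutz tube) generates---this is precisely why the elliptic $\theta$-circle of the axisymmetric model, whose transverse return map is a genuine rotation with no neutral direction, is used---and second, realising \emph{every} real value of the helicity, for which rescaling (which only reaches one sign and never $0$) must be supplemented by the local-modification and intermediate-value argument above. The verification that all the cut-and-paste steps preserve exactness of $\iota_X\mu$ and the homotopy class is routine given Lemmas~\ref{lem:C0bound} and~\ref{lem:curlinverse} together with the displaceability of $2$-cycles off small balls.
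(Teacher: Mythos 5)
Your overall route is genuinely different from the paper's: you build an axisymmetric ``vortex with swirl'' in a ball (two elliptic $\theta$-circles over critical points of a stream function, with actions $2\pi\psi(p_i)$ of opposite sign) and graft it into a reference field, whereas the paper never leaves the contact-geometric setting: it takes a global contact form $\alpha$, normalises it in solid-torus neighbourhoods $D^2\times S^1$ of two closed curves transverse to $\xi$, and replaces $d\theta+\tfrac{r^2}{2}d\varphi$ by $\phi(r)d\theta+\tfrac{r^2}{2}d\varphi$ with $\phi=\pm1+\tau r^2/2$ near the core ($\tau$ irrational), so the cores become non-degenerate elliptic orbits with actions $\pm2\pi$; the helicity is then tuned by a third solid-torus modification $F(r)d\theta+G(r)d\varphi$ with $(F')^2+(G')^2\neq0$, and the homotopy class is tracked by explicit interpolations. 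Your first two steps (local model, gluing, exactness via $H^2_c$ of a ball, null-homology of the $\gamma_i$, and the $\pi_3(S^2)$ bookkeeping of the homotopy class) are sound in outline, if heavier on unstated routine verifications than the paper's explicit computations.

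However, the helicity-prescription step contains a genuine error. The fields $W_\pm$ you postulate do not exist: a divergence-free field supported in a ball $B_0$ has zero flux through every horizontal slice $B_0\cap\{z=c\}$ (apply the divergence theorem to $B_0\cap\{z<c\}$, whose remaining boundary lies where the field vanishes), so a nonnegative vertical component forces the vertical component to vanish identically. Such a field is then a $z$-parametrised family of compactly supported planar Hamiltonian fields, $\iota_W\mu=d(H\,dz)$ with $H$ compactly supported, and its helicity is $\int H\,dz\wedge dH\wedge dz=0$. Hence the ``convex cone of localised exact divergence-free fields with nonnegative vertical component'' consists entirely of zero-helicity fields, the $t^2$-term in your expansion vanishes identically, and the intermediate value argument collapses: your construction only reaches the helicities obtainable by rescaling $X_1$ (one sign, and not $h=0$, and nothing at all if $\mathcal{H}(X_1)=0$). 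This is precisely the obstruction the paper's third modification is designed to avoid: working in a solid torus around a closed curve rather than in a ball, the modified field $G'(r)\partial_\theta-F'(r)\partial_\varphi$ stays non-vanishing under the open condition $(F')^2+(G')^2\neq0$ while the twisting term $\int 2GF'\,dr$ can be prescribed to be any real number, of either sign. To repair your argument you would need either to adopt such a solid-torus modification, or to exploit the cross term $2t\int\alpha_{X_1}\wedge\iota_W\mu$ (which is linear in $t$ and need not vanish) together with a separate argument that $X_1+tW$ remains non-vanishing and in the class $a$ for the required range of $t$; as written, neither is done.
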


\begin{proof}
Arguing as in the proof of Lemma \ref{lem:Hreeb}, there exists a contact structure $\xi$, given by the kernel of a global one-form $\lambda$, in any given homotopy class of plane fields. In particular, any Reeb field of $\xi$ belongs to a prescribed homotopy class of non-vanishing vector fields. Choose two null-homologous embedded closed curves $\gamma_1',\gamma_2'$ on $M$. These can be $C^0$-perturbed to embedded closed curves positively transverse to $\xi$, see e.g. \cite[Theorem 3.3.1]{Ge}, that we denote by $\gamma_1$ and $\gamma_2$.

Near each $\gamma_i$ there is a neighborhood $U_i\cong D^2_{\delta_i}\times S^1$, where $D^2_{\delta_i}$ is a disk of radius $\delta_i$, endowed with coordinates $(r_i,\varphi_i,\theta_i)$ where the contact structure is defined by the form
$$ \alpha_i= d\theta_i + \frac{r_i^2}{2}d\varphi_i, $$
see e.g. \cite[Section 4.1]{Ge}.
The coordinates $(r_i,\varphi_i)$ are polar coordinates in the disk factor, while $\theta_i$ is the canonical angle coordinate of $S^1$. Since locally $\alpha_i$ and $\lambda$ both define the contact structure $\xi$, it follows that $\alpha_i=f_i \lambda$ for some positive function $f_i\in C^\infty(U)$. Choose a positive function $g$ that is equal to $1$ near $r=\delta_i$ and equal to $f_i$ near $\delta_i-\varepsilon$ for some very small $\varepsilon$. Up to slightly shrinking each $U_i$, this shows that there is a global contact form $\alpha$ such that $\alpha|_{U_i}=\alpha_i$. For the moment, we fix the volume form $\mu'=\alpha \wedge d\alpha$. For this volume form, the Reeb field $R$ of $\alpha$ satisfies $\iota_R \mu'=d\alpha$.\\

We proceed to modify $\alpha$ near $U_1$. In this neighborhood, the volume form $\mu'$ is equal to $r_1 dr_1\wedge d\varphi_1 \wedge d\theta_1$. Consider
$$ \eta_1= \phi(r_1)d\theta_1 + \frac{r_1^2}{2}d\varphi_1, $$
where $\phi:[0,\delta_1)\rightarrow \mathbb{R}$ is a smooth positive function satisfying $\phi(r_1)= 1+\tau \frac{r_1^2}{2}$ near $r=0$, with $\tau\in \mathbb{R}\setminus \mathbb{Q}$ being some irrational number, and $\phi(r_1)=1$ near $r_1=\delta_1$. We have
$$ d\eta_1= \phi'(r_1)dr_1\wedge d\theta_1 + r_1 dr_1\wedge d\varphi_1, $$
and the vector field $Y_1$ defined by $\iota_{Y_1}\mu'= d\eta_1$ is
$$ Y_1= \pp{}{\theta_1} - \frac{\phi'(r_1)}{r_1} \pp{}{\varphi_1}, $$
This vector field extends as $R$ away from $U_1$, because $\eta_1=\alpha$ near $r=\delta_1$. It is clearly homotopic to $R$ since $Y_1$ is homotopic to $\pp{}{\theta_1}$ relative to the boundary, so the homotopy class has not changed after this first modification.  Furthermore $\gamma_1=\{0\}\times S^1$ is a non-degenerate periodic orbit of $Y_1$ satisfying $\int_{\gamma_1}\eta_1=2\pi$.\\

We modify $\alpha_2$ in $U_2$ by considering 
$$ \eta_2= \psi(r_2)d\theta_2 + \frac{r_2^2}{2}d\varphi_2, $$
where $\psi:[0,\delta_2)\rightarrow \mathbb{R}$ is a smooth function equal to $-1+\tau \frac{r^2_2}{2}$ near $r_2=0$ and equal to $1$ near $r_2=\delta_2$. Then the vector field $Y_2$ defined by $\iota_{Y_2}\mu'=d\eta_2$ is 
$$ Y_2= \pp{}{\theta_2} + \frac{\psi'(r_2)}{r_2}\pp{}{\varphi_2}. $$
As before, the closed curve $\gamma_2$ is a non-degenerate periodic orbit of $Y_2$, but in this case $\int_{\gamma_2}\eta_2=-2\pi$. Since each $\eta_i$ coincides with $\alpha$ near $r_i=\delta_i$, there is a global one-form $\eta$ equal to $\alpha$ away from $U_i$ and equal to $\eta_i$ in $U_i$. The vector field $Y$ defined by $\iota_Y \mu'=d\eta$ has $\gamma_1$ and $\gamma_2$ as non-degenerate closed orbits and $\int_{\gamma_1}\eta>0$ and $\int_{\gamma_2}\eta<0$. Again $Y$ is homotopic to $Y_1$, since in $U_2$ it is homotopic to $\pp{}{\theta_2}$ relative to the boundary, and thus the homotopy class has not changed after this modification.\\

 It remains to show that we can further modify $\eta$ to prescribe the helicity of the vector field determined by the differential of $\eta$. We argue as before and assume that there is a third neighborhood $U_3\cong D_{\delta_3}\times S^1$ (away from $U_1\cup U_2$) of some closed curve transverse to $\xi$ where we can assume that $$\eta|_{U_3}=\alpha|_{U_3}=d\theta_3+\frac{r_3^2}{2}d\varphi_3.$$
We modify $\eta$ to a one-form with an expression of the form
$$ \tilde \eta= F(r_3) d\theta_3 + G(r_3)d\varphi_3, $$
where $F:[0,\delta_3)\rightarrow \mathbb{R}$ and $G:[0,\delta_3)\rightarrow \mathbb{R}$ are smooth functions such that $F=1$ near $r=0$ and $r=\delta_3$, and $G=\frac{r_3^2}{2}$ near $r=0$ and $r=\delta_3$. We want to choose $F$ and $G$ satisfying
$$F'(r_3)^2+G'(r_3)^2\neq 0,$$ 
to ensure that $d\tilde \eta$ is everywhere non-degenerate and hence that it defines a non-vanishing exact divergence-free vector field by duality with $\mu$. The helicity of the vector field $\tilde Y$ defined by the equation $\iota_{\tilde Y}\mu=d\tilde \eta$ is
\begin{align*}
    \mathcal{H}(\tilde Y)&= \int_M \tilde \eta \wedge d\tilde \eta \\
    &= \int_{M\setminus U_3} \eta\wedge d\eta + \int_{U_3} \tilde \eta \wedge d\tilde \eta.
\end{align*}
The first term does not depend on our choice of $F$ and $G$. The second term satisfies

\begin{align*}
     \int_{U_3} \tilde \eta \wedge d\tilde \eta&= \int_{U_3} \left(F(r_3)G'(r_3) - G(r_3)F'(r_3) \right)d\theta_3\wedge dr_3 \wedge d\varphi_3\\
     &=4\pi^2\left([FG]_0^{\delta_3}- \int_0^{\delta_3} 2G(r_3)F'(r_3) dr_3 \right).
\end{align*}
The first term is again independent of our choice of $F$ and $G$, so we simply want to prescribe the second term. First, fix some $F$ such that $F'>0$ in some interval $[a,b]\subset(0,1)$. We can choose $a$ such that $|G|<\delta$ in $[0,a]$ for any given $\delta$ (since $G=\frac{r_3^2}{2}$ near $r=0)$), and we can choose $b$ such that the size of $[b,1]$ is arbitrarily small. Then $\int_{I\setminus [a,b]} 2GF'dr_3$ can be assumed to be arbitrarily small in absolute value. We choose $G$ in $[a,b]$ arbitrarily, prescribing the value of $\int_{\tau_1}^{\tau_2} 2GF'$. Then the vector field $\tilde Y$ will have a prescribed value of the helicity. 

We have to check that $\tilde Y|_{U_3}= G'(r_3) \pp{}{\theta_3} - F'(r_3)\pp{}{\varphi_3}$ is homotopic to $Y_2|_{U_3}=\pp{}{\theta_3}$ relative to the boundary in $U_3$, so that the homotopy class of $\tilde Y$ is the same as that of $Y_2$ (and hence belongs to the prescribed initial homotopy class). We can construct an explicit homotopy in this case. For $t\in [0,1]$, consider a first homotopy:
$$ V_t=  \pp{}{\theta_3} - tF'(r_3)\pp{}{\varphi_3}, \enspace t\in [0,1].$$
Construct a second homotopy 
$$ \tilde V_s= ((1-s)+sG'(r_3))\pp{}{\theta_3}-F'(r_3)\pp{}{\varphi_3}, \enspace s\in [0,1]. $$
Both homotopies are homotopies of non-vanishing vector fields, since whenever $F'(r_3)$ vanishes we know that $G'(r_3)>0$. A concatenation of both homotopies gives a homotopy from $Y_2$ to $\tilde Y$ relative to the boundary, as we wanted.\\

To conclude, recall that we have used a different auxiliary volume form $\mu'$ for our computations of dual two-forms. The original volume form $\mu$ is equal to $H\mu'$ for a positive function $H\in C^\infty(M)$. Consider the vector field $\hat Y=\frac{1}{H} \tilde Y$. The orbits $\gamma_i$ are still non-degenerate periodic orbits of $\hat Y$, and since $\iota_{\hat Y}\mu= \iota_{\tilde Y} \mu'=d\tilde \eta$, the property $\iota_{\gamma_1} \tilde \eta>0 $ and $\int_{\gamma_2}\tilde \eta<0$ is satisfied by a primitive of $\iota_{\hat Y}\mu$. Furthermore, the helicity of $\hat Y$ with respect to $\mu$ is the same as that of $\tilde Y$ with respect to $\mu'$, which hence can be prescribed to take any value. 
\end{proof}
Our next goal will be to construct vector fields whose vorticity is in the adjoint orbit (under the action of $\operatorname{Diff}_\mu^0(M)$) of some vector fields constructed via Lemma \ref{lem:Hreeb} and Proposition \ref{prop:Hnonreeb}, but which have prescribed large enough values of the energy.

\subsection{Local vortex stretching}
In this subsection, we start with an arbitrary exact divergence-free vector field $X$ (not identically zero) and prescribe any large enough value of the energy of a vector field $u$ whose vorticity is in the adjoint orbit of $X$.

\begin{theorem}[Local vortex stretching]\label{thm:localvortex}
Let $X$ be an exact divergence-free vector field on a closed Riemannian three-manifold that does not vanish everywhere. Then for each large enough positive real number $e$, there exists a divergence-free vector field $u_e$ such that:
\begin{itemize}
    \item[-] $\operatorname{curl}(u_e)=Y$, where $Y=\varphi_*X$ for some volume-preserving diffeomorphism $\varphi$ of $M$ isotopic to the identity, compactly supported near any point $p$ such that $X|_p\neq 0$
    \item[-] $\mathcal{E}(u_e)=e$.
\end{itemize}
\end{theorem}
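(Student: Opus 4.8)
The plan is to fix a point $p$ with $X|_p\neq 0$ and construct a family $\{\varphi_t\}_{t\geq 0}$ of volume-preserving diffeomorphisms of $M$, each supported in a fixed small ball $B\ni p$ and isotopic to the identity, with $\varphi_0=\mathrm{id}$ and $t\mapsto\varphi_t$ continuous in the $C^1$-topology, such that the function $t\mapsto\mathcal E\big(\operatorname{curl}^{-1}((\varphi_t)_*X)\big)$ is continuous, equals $e_0:=\mathcal E(\operatorname{curl}^{-1}X)$ at $t=0$, and tends to $+\infty$ as $t\to\infty$. Note that $(\varphi_t)_*X$ is exact divergence-free because $X$ is (same computation as in \eqref{eq:contactinvariance}), so $\operatorname{curl}^{-1}$ applies. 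Since $[0,\infty)$ is connected, the image of this continuous function contains $[e_0,\infty)$; for $e\geq e_0$ choose $t_e$ with $\mathcal E(\operatorname{curl}^{-1}((\varphi_{t_e})_*X))=e$ and set $u_e:=\operatorname{curl}^{-1}((\varphi_{t_e})_*X)$, $\varphi:=\varphi_{t_e}$, $Y:=(\varphi_{t_e})_*X$. Then $u_e$ is divergence-free with $\operatorname{curl}(u_e)=Y=\varphi_*X$, $\varphi$ is volume-preserving, isotopic to the identity and supported near $p$, and $\mathcal E(u_e)=e$, as required.

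\textbf{Normalization and the stretching diffeomorphism.} A flow-box chart around $p$ gives coordinates $(x,y,z)$ with $X=\partial_z$; since $\mathcal L_X\mu=0$ we have $\mu=f(x,y)\,dx\wedge dy\wedge dz$ there, and a two-dimensional Moser argument in $(x,y)$ (which leaves $\partial_z$ unchanged) normalizes $\mu$ to $dx\wedge dy\wedge dz$ on a small cube $Q\ni p$, on which $g$ is uniformly comparable to the Euclidean metric. Fix a ball $B\Subset Q$ and a thin vertical vortex sub-tube $T_0=\{x^2+y^2<\rho_0^2,\ |z|<\ell_0\}\Subset B$ (whose integral curves are $X$-segments). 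For $t\geq 1$ let $\gamma_t\subset B$ be an embedded curve of length $\sim t$ coiling inside $B$, and let $\varphi_t$ be a volume-preserving diffeomorphism of $M$, supported in $B$ and isotopic to the identity, mapping $T_0$ onto a thin embedded tubular neighborhood $N_t$ of $\gamma_t$ of the same volume $\operatorname{vol}(T_0)$ — hence of transverse radius $r_t\sim\rho_0\ell_0^{1/2}t^{-1/2}$ — in such a way that it carries $X|_{T_0}=\partial_z$ to a field parallel on $N_t$ to the unit tangent $\hat\tau_t$ of $\gamma_t$, while rearranging the rest of $X$ inside $B$ and leaving $X$ untouched outside $B$. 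Such a $\varphi_t$ is produced explicitly, or by a tube-straightening isotopy corrected to be volume-preserving via Moser's trick; one extends the family to $t\in[0,1]$ so that $\varphi_0=\mathrm{id}$ and $t\mapsto\varphi_t$ is continuous in $C^1$. Volume preservation forces the transverse compression to offset the longitudinal stretch by the length ratio $\sim t$; writing $Y_t:=(\varphi_t)_*X$ one gets $|Y_t|\sim t$ along the core of $N_t$, with flux of $Y_t$ across a cross-section of $N_t$ equal to the conserved circulation $\sim\rho_0^2$.

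\textbf{Continuity and divergence of the energy.} Continuity of $t\mapsto\mathcal E(\operatorname{curl}^{-1}Y_t)$ is immediate: $\varphi_t$ and $\varphi_t^{-1}$ vary continuously in $C^1$, hence $Y_t=(\varphi_t)_*X$ varies continuously in $C^0$, hence $\operatorname{curl}^{-1}Y_t$ varies continuously in $C^0$ by Lemma \ref{lem:curlinverse}, and $v\mapsto\int_M v\cdot v\,\mu$ is continuous on $C^0(M)$. The heart of the matter is that $\mathcal E(\operatorname{curl}^{-1}Y_t)\to\infty$. For this I would use the dual description of the $L^2$-norm together with the formal self-adjointness of $\operatorname{curl}$ on divergence-free fields on the closed manifold $M$: for any vector field $v$,
\[
\norm{\operatorname{curl}^{-1}Y_t}_{L^2}\ \norm{\operatorname{curl} v}_{L^2}\ \geq\ \Big|\int_M (\operatorname{curl}^{-1}Y_t)\cdot\operatorname{curl}(v)\,\mu\Big|\ =\ \Big|\int_M Y_t\cdot v\,\mu\Big|.
\]
Choosing $v=v_t$ to be a unit-length vector field supported on a slightly fattened neighborhood of $N_t$ and equal to $\hat\tau_t$ on $N_t$, the right-hand side is $\gtrsim t$ (the leftover rearranged vorticity in $B$ contributes only a bounded amount, absorbed for $t$ large), while $v_t$ varies only on the transverse scale $r_t\sim t^{-1/2}$ over a set of fixed volume $\sim\rho_0^2\ell_0$, so $\norm{\operatorname{curl} v_t}_{L^2}\lesssim t^{1/2}$. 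Hence $\norm{\operatorname{curl}^{-1}Y_t}_{L^2}\gtrsim t^{1/2}$ and $\mathcal E(\operatorname{curl}^{-1}Y_t)\gtrsim t\to\infty$, which closes the argument via the intermediate value theorem as above.

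\textbf{Main obstacle.} The delicate step is the construction of $\varphi_t$: it must be simultaneously (i) volume-preserving and supported in the \emph{fixed} small ball $B$ — which is precisely why the elongated tube $N_t$ has to coil and its cross-section shrink like $t^{-1/2}$; (ii) isotopic to the identity and $C^1$-continuous in $t$ with $\varphi_0=\mathrm{id}$; and (iii) geometrically controlled (embeddedness of the fattened $N_t$, curvature of $\gamma_t$ kept comparable to $r_t$) so that the estimates on $\int_M Y_t\cdot v_t\,\mu$ and $\norm{\operatorname{curl} v_t}_{L^2}$ go through. The non-Euclidean metric on $Q$ only affects constants, and the flow-box/Moser normalization, the continuity, and the duality lower bound are routine.
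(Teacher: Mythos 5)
Your overall skeleton coincides with the paper's: reduce to showing that the energy $\mathcal E(\curl^{-1}((\varphi)_*X))$ can be made arbitrarily large by volume-preserving diffeomorphisms supported near a point where $X\neq 0$, and then recover every intermediate value $e\geq\mathcal E(\curl^{-1}X)$ from the $C^0$-continuity of $\curl^{-1}$ (Lemma \ref{lem:curlinverse}) and an intermediate value argument along an isotopy (the paper runs this step by taking an arbitrary isotopy to $\varphi$ and correcting it to be volume-preserving at every time via Moser's path method; you instead demand that the family be volume-preserving from the start, which is equivalent in substance). Where you genuinely diverge is the mechanism for unboundedness, and that is also where your argument has a real gap. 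The paper uses the explicit twist map $F_K(r,\theta,z)=(r,\theta+K\varphi(r)\psi(z),z)$ in a flow-box, which gives $(F_K)_*X=X+KZ$ with $Z$ a \emph{fixed} nonzero exact divergence-free field; the energy of $\curl^{-1}(X+KZ)$ is then a quadratic polynomial in $K$ with positive leading coefficient $\mathcal E(\curl^{-1}Z)>0$, and unboundedness is immediate --- no geometric estimates on the vorticity are needed at all.

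Your coiled-tube construction is physically the right picture of vortex stretching, and your duality estimate $\norm{\curl^{-1}Y_t}_{L^2}\,\norm{\curl v_t}_{L^2}\geq|\int_M Y_t\cdot v_t\,\mu|$ is a legitimate way to bound the energy from below. But the load-bearing object --- a family $\varphi_t$ that is volume-preserving, supported in a fixed ball, $C^1$-continuous in $t$ with $\varphi_0=\mathrm{id}$, and carries the foliation of $T_0$ by $X$-lines onto a foliation of an embedded coiled tube $N_t$ by curves parallel to $\gamma_t$ --- is asserted rather than constructed, and you yourself flag it as the main obstacle. Moreover, the estimate $\int_M Y_t\cdot v_t\,\mu\gtrsim t$ depends on unproved properties of that map: $v_t$ must be at least Lipschitz, so it cannot vanish abruptly at $\partial N_t$, and on the transition shell around $N_t$ the field $Y_t$ is the pushforward of $X$ from just outside $T_0$, which the rearrangement can make as large as $|Y_t|$ on the core; whether its contribution "is absorbed" or instead cancels the main term depends on the alignment of $Y_t$ with $\hat\tau_t$ there, which only a concrete construction can guarantee. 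So as written the key step is incomplete. The fix is either to carry out the tube construction with the stated geometric control (doable but lengthy), or to replace it by the paper's twist map, which achieves the same conclusion with an elementary computation.
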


\begin{proof}
For a fixed vector field $X$ not everywhere vanishing, consider the set
$$ \mathcal{A}_0(X)=\{(\varphi)_*(X)  \mid \varphi \in \operatorname{Diff}_\mu^0(M) \text{ and }t \in \mathbb{R}\},  $$
where $\operatorname{Diff}_\mu^0(M)$ is the set of volume-preserving diffeomorphisms that are isotopic to the identity. The fact that the helicity is invariant under volume-preserving transformations implies that each $Y\in \mathcal{A}_0(X)$ satisfies $\mathcal{H}(Y)=\mathcal{H}(X)$. We define the set 
$$ E=\{\mathcal{E}(\curl^{-1}Y)  \mid  Y\in \mathcal{A}_0(X)\}.$$
The following lemma reduces the proof of Theorem \ref{thm:localvortex} to showing that $E$ is unbounded by means of composing $X$ with a compactly supported volume-preserving isotopy.

\begin{lemma}\label{lem:A}
If $E$ is not bounded, then for each $e\geq \mathcal{E}(\curl^{-1}X)$, there exists $Y\in \mathcal{A}_0(X)$ such that $\mathcal{E}(\curl^{-1}Y)=e$. Furthermore, if we can find vector fields in $E$ with arbitrarily large energy of the form $F_*X$ with $F$ a volume-preserving and compactly supported near a point $p$, then we can choose $Y=G_*X$ with $G$ compactly supported near $p$ too.
\end{lemma}

\begin{proof}[Proof of Lemma \ref{lem:A}]
    If $E$ is not bounded then it is necessarily not bounded from above, so given a real number $e>0$ we can find some other number $M>e$ and a vector field $Y\in \mathcal{A}_0(X)$ such that $\mathcal{E}(\curl^{-1}(Y))=M$. By definition of $\mathcal{A}_0(X)$, the vector field $Y$ is given by $\varphi_*X$ where $\varphi$ is a volume-preserving diffeomorphism of $M$ which is isotopic to the identity. Let $\varphi_t$ be an isotopy such that
    \begin{itemize}
        \item[-] $\varphi_t=\varphi$ for $t\in [1-\delta,1]$,
        \item[-] $\varphi_t=\operatorname{id}$ for $t\in [0,\delta]$,
    \end{itemize} 
    where $\delta>0$ is a small enough positive number. We will now apply Moser's path method to show that $\varphi_1$ is isotopic to the identity through volume-preserving diffeomorphisms. For each $t\in [0,1]$, we write $\mu_t=\varphi_t^*\mu$. By construction, it is satisfied that for
     $t\in [0,\delta]\cup [1-\delta,1]$ we have $\mu_t=\mu$. Consider the two-parametric family of volume forms 
     $$\mu_{t,s}=(1-s)\mu_t+s\mu.$$
     This family satisfies
      \begin{equation}\label{eq:boundary}
     \mu_{t,s}=\mu \quad t\in [0,\delta]\cup [1-\delta,1].
     \end{equation} 
    Note as well that
    $$\int_M \mu_t= \int_M \varphi_t^*\mu= \int_M\mu,$$ which implies that $[\mu_{t,s}]=[\mu]$ in cohomology. It follows that $\mu_{t,s}-\mu=d\eta_{t,s}$ for some family of one-forms $\eta_{t,s}$. Equation \eqref{eq:boundary} implies that we can assume that $\eta_{t,s}=0$ for $t\in [0,\delta]\cup [1-\delta,1]$. Using that $\mu_{s,t}$ is a volume form for each value of the parameters, for a fixed $t$ the equation
    \begin{equation}\label{eq:mos}
        \mathcal{L}_{X_{s,t}}\mu_{s,t}=-d\eta_{s,t},
    \end{equation}  
    admits a unique solution $X_{s,t}$, that we understand as time-dependent vector fields in $M$, where $s$ represents the time-parameter and $t$ parametrizes the family of vector fields. Indeed, we can find a solution to Equation \eqref{eq:mos} by solving instead
    $$\iota_{X_{s,t}}\mu_{s,t}=-\eta_{s,t},$$ which admits a unique solution because $\mu_{s,t}$ is a volume form for each value of $s$ and $t$. The family $X_{s,t}$ integrates to a $t$-parametric family of isotopies $$\psi_{s}^t:M\longrightarrow M,$$
    that satisfy $(\phi_1^t)^*\mu_t=\mu$ for each $t\in [0,1]$. Furthermore, this isotopy is constant for $t$ near $[0,\delta]\cup [1-\delta,1]$. The family of diffeomorphisms $G_t= \psi_1^t \circ \varphi_t$ satisfies
    \begin{itemize}
        \item [-] $G_0=\operatorname{id}$,
        \item [-] $G_1=\varphi_1$,
        \item [-] ${G_t}^*\mu= (\psi_1^t)^*(\varphi_t^*\mu)=(\psi_1^t)^*\mu_t=\mu$.
    \end{itemize}
    We have thus proved that $\varphi_1$ is isotopic to the identity via volume-preserving diffeomorphisms.

    To conclude, we use that the energy functional $\mathcal{E}$ is continuous with the $C^0$-topology, and so is the inverse curl operator by Lemma \ref{lem:curlinverse}. By hypothesis, we have
    $$\mathcal{E}(\curl^{-1}Y)=\mathcal{E}(\curl^{-1}({G_1}^*X))=M> \mathcal{E}(\curl^{-1}X)=\mathcal{E}(\curl^{-1}({G_0}^*X)),$$ thus, by continuity, we deduce that there exists some $\tilde t\in [0,1]$ such that 
    $$\mathcal{E}(\curl^{-1}({G_{\tilde t}}^*(X)))=e.$$ By construction $G_{\tilde t}$ is volume-preserving and isotopic to the identity, so it follows that ${G_{\tilde t}}^*(X)$ is in $\mathcal{A}_0(X)$. Observe that since $\mu_{t,s}=\mu$ away from the support of $\varphi_t$, it follows that the support of $G_{\tilde t}$ is contained in the support of $\varphi_t$. This shows that if $\varphi_t$ is compactly supported near a point, then so is $G:=G_{\tilde t}$.
\end{proof}

It is left to show that $E$ is not bounded from above by considering the pushforward of $X$ by compactly supported volume-preserving isotopies near an arbitrary point $p$ where $X|_p\neq 0$. We can find a flow-box neighborhood of $X$ near $p$ of the form $U=D^2\times [0,1]$, with flow-box coordinates $(x,y,z)$, where the vector field takes the form $X=\pp{}{z}$. The fact that $X$ preserves $\mu$ implies that the volume form has the form $\mu=f(x,y)dx\wedge dy \wedge dz$ in these coordinates. Up to a change of coordinates in the $D^2$ factor, we can assume that $\mu=dx\wedge dy \wedge dz$.

Let $(r,\theta)$ be polar coordinates of $D^2$, and for each $K>0$ consider the diffeomorphism
\begin{align*}
    F_K:D^2\times I &\longrightarrow D^2\times I\\
    (r,\theta,z) &\longmapsto (r,\theta + K\varphi(r)\psi(z),z),
\end{align*}
where $\varphi$ and $\psi$ are smooth functions satisfying the following conditions. The function 
$$\varphi:[0,1]\longrightarrow \mathbb{R}$$
is non-negative, equal to $0$ near $r=0$ and $r=1$, and equal to $1$ for $r\in [1/3,2/3]$. The function 
$$\psi:[-1,1]\longrightarrow \mathbb{R}$$ is a fixed bump function that is positive somewhere in $(-1,1)$. The diffeomorphism $F_K$ is volume-preserving and compactly supported in $U$, hence it extends to $M$ as the identity away from $U$. Computing the pushforward of $X$ by $F_K$ we obtain
$$Y=(F_K)_*(X)=K\varphi(r)\psi'(z)\pp{}{\theta}+\pp{}{z}, $$
a representation of how the integral curves of $X$ change under $F_K$ in a flow-box is shown in Figure \ref{fig:vortex}.

   \begin{figure}[!h]
  \begin{center}
 \begin{tikzpicture}
      \node[anchor=south west,inner sep=0] at (0,0) {\includegraphics[scale=0.08]{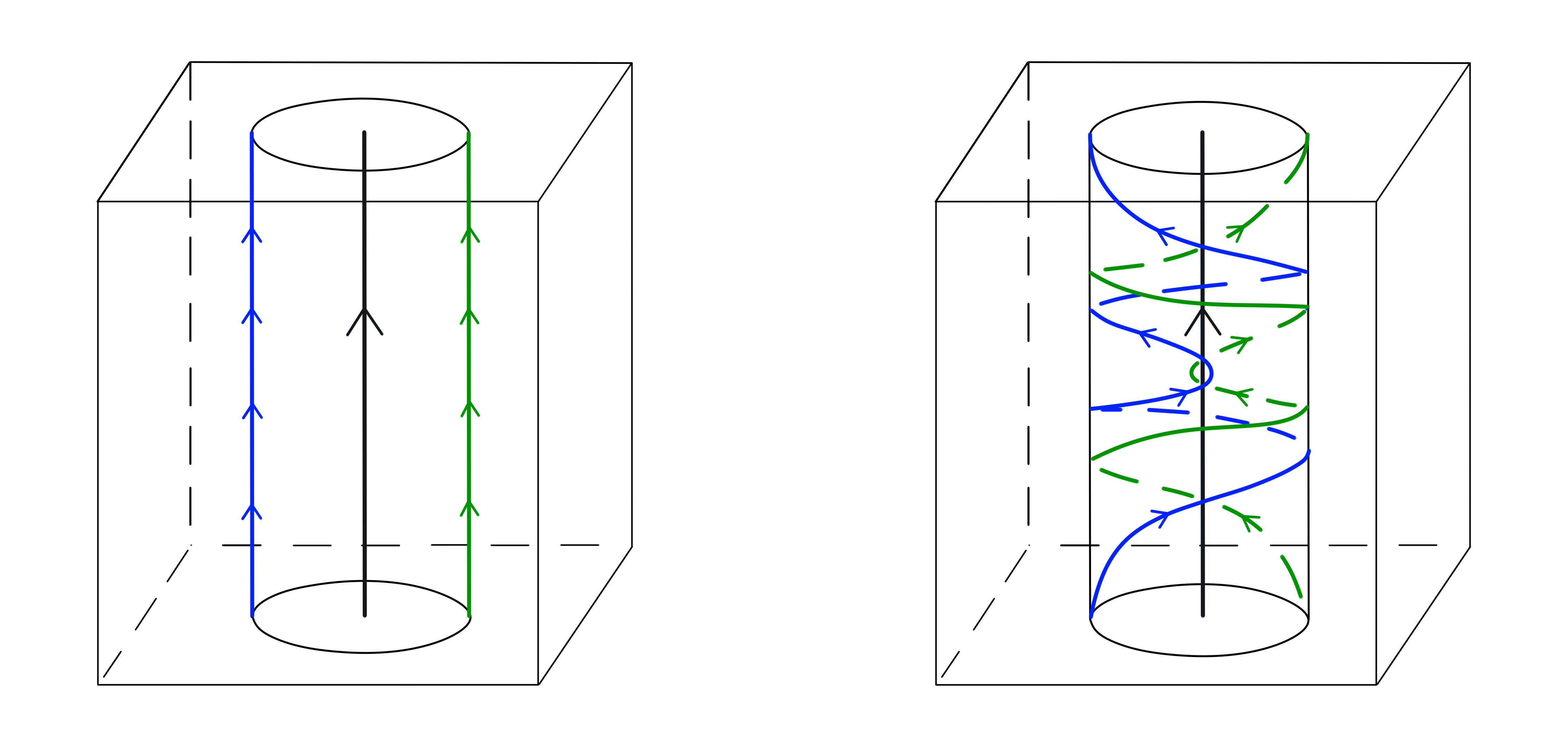}};
      \draw[->] (5.6,3)--(7,3);
      \node at (6.3, 3.5) {${F_K}_*$};
 \end{tikzpicture}
 \centering
 \caption{Flowlines of $X$ and of $(F_K)_*(X)$.}
  \label{fig:vortex}
 \end{center}
 \end{figure}

The vector field $Y$ splits as $KZ+X$, where $Z$ is compactly supported in $U$ and $K>0$ is an arbitrarily large constant. The energy of $\curl^{-1}Y=K\curl^{-1} Z+\curl^{-1} X$ is
\begin{equation}\label{eq:split}
    \mathcal{E}(\curl^{-1} Y)=\mathcal{E}(\curl^{-1} X)+K^{2} \mathcal{E}(\curl^{-1} Z)+2K\int_{U} g(\curl^{-1} X, \curl^{-1} Z).
\end{equation}
Let $C_{1}, C_{2}$ be constants verifying
\[
C_{1} \geq \bigg| 2 \int_{U} g(\curl^{-1} X, \curl^{-1} Z)\bigg|, \,\,\, C_{2} \leq  \mathcal{E}(\curl^{-1} Z),
\]
then we have
\[
\mathcal{E}(\curl^{-1} Y) \geq \mathcal{E}(\curl^{-1} X)+K^{2}\bigg(C_{2}-\frac{C_{1}}{K}\bigg).
\]
Therefore, provided $\curl^{-1} Z$ is not identically zero and we can choose $C_2>0$, for $K$ large enough equation $C_{2}-\frac{C_{1}}{K}>\frac{1}{2}C_{2}$ implies
\[
\mathcal{E}(\curl^{-1} Y) \geq  \mathcal{E}(\curl^{-1} X)+\frac{1}{2} K C_{2}
\]
and the lemma follows. It is clear that by construction $Z$ is a divergence-free vector field that is necessarily exact: its support is a simply connected domain. Since it is not everywhere vanishing, the vector field $\curl^{-1}Z$ is not identically zero.
\end{proof}

We have called Theorem \ref{thm:localvortex} ``local vortex stretching" since not only the energy of the inverse curl of the vector field is increased by this modification, but also the length of the integral curves of the vorticity field, as depicted in Figure \ref{fig:vortex}, and its energy.

\section{Applications to ideal hydrodynamics}\label{s:main}
In this section, we will use our previous constructions to prove the non $C^1$-mixing of the Euler equations. We give an additional application of our discussion to stationary solutions in the round sphere.

\subsection{Non $C^1$-mixing of the Euler equations}

On a Riemannian closed three-manifold, we denote by $$\varphi_t:\mathfrak{X}_\mu(M)\longrightarrow \mathfrak{X}_\mu(M),$$
the local flow defined by the Euler equation on the space of smooth divergence-free vector fields. That is, for an initial condition $u_0$, the time-dependent vector field $u_t=\varphi_t(u_0)$ solves the Euler equation with initial condition $u_0$. The flow $\varphi_t$ is locally defined near $t=0$ for any initial condition, see \cite{EM}, but it is not known whether it is defined for each $t\in \mathbb{R}$.

For a given homotopy class of non-vanishing vector fields $a$, denote by $\mathcal{V}_{a,h,e}$ the set of volume preserving flows in $M$ with energy $e$, and whose curl is everywhere non-vanishing, in the homotopy class $a$ and with helicity $h$. As discussed in Section \ref{ss:eul}, this set is invariant by the Euler equations.

\begin{theorem}\label{thm:nonmix}
    Let $(M,g)$ be a closed Riemannian three-manifold. Fix a homotopy class $a$, a non-zero value of the helicity $h\neq 0$, and an energy $e \geq e_{0}$ (where $e_{0}$ depends on $h$ and $a$). Then there exist two $C^1$-open sets $C_{a,h,e}\subset \mathcal{V}_{a,h,e}$ and $N_{a,h,e}\subset \mathcal{V}_{a,h,e}$ such that
    $$ \varphi_t(C_{a,h,e})\cap N_{a,h,e} =\emptyset, $$
    for each $t$ where the flow is defined.
\end{theorem}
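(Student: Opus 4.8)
\emph{Proof plan.} I would take $C_{a,h,e}$ to be the full set of contact type initial conditions inside $\mathcal{V}_{a,h,e}$, namely
$$ C_{a,h,e}:=\{u\in\mathcal{V}_{a,h,e}\,:\,\curl u\in\mathcal{C}_\mu\}, $$
and $N_{a,h,e}$ to be a $C^0$-small neighborhood, measured on vorticities, of a carefully chosen velocity field obeying the robust Cieliebak obstruction. The disjointness under the flow is then automatic: if $u_0\in C_{a,h,e}$, Helmholtz's transport \eqref{eq:transport} gives $\curl u_t=(\phi_t)_*\curl u_0$ with $\phi_t\in\operatorname{Diff}_\mu^0(M)$, and \eqref{eq:contactinvariance} forces $\curl u_t\in\mathcal{C}_\mu$ for every $t$ for which $u_t$ exists; hence every element of $\varphi_t(C_{a,h,e})$ is of contact type, whereas by Corollary \ref{cor:nonR} no vorticity carrying two null-homologous closed orbits with opposite-sign period integrals can be of contact type. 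So once $N_{a,h,e}$ is arranged to consist only of velocities with such vorticities, $\varphi_t(C_{a,h,e})\cap N_{a,h,e}=\emptyset$ for all admissible $t$, and everything stays inside $\mathcal{V}_{a,h,e}$, which is $\varphi_t$-invariant.

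It remains to make both sets nonempty and $C^1$-open for all $e$ above a threshold. For nonemptiness I would apply Lemma \ref{lem:Hreeb} to get an exact divergence-free field $Y_0$ of contact type with $\mathcal{H}(Y_0)=h$ in the homotopy class $a$, and Proposition \ref{prop:Hnonreeb} to get $X_0$ in class $a$ with $\mathcal{H}(X_0)=h$, two non-degenerate null-homologous closed orbits $\gamma_1,\gamma_2$, and a primitive $\alpha_0$ of $\iota_{X_0}\mu$ with $\int_{\gamma_1}\alpha_0>0>\int_{\gamma_2}\alpha_0$. Theorem \ref{thm:localvortex} applied to each then produces, for every $e\ge e_0:=\max\{\mathcal{E}(\curl^{-1}Y_0),\mathcal{E}(\curl^{-1}X_0)\}$, divergence-free fields $u_e^C,u_e^N$ of energy exactly $e$ with $\curl u_e^C=\varphi_*Y_0$ and $\curl u_e^N=\psi_*X_0$ for some $\varphi,\psi\in\operatorname{Diff}_\mu^0(M)$. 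Since such pushforwards preserve the helicity, the homotopy class of the vorticity, and non-vanishing, and since by \eqref{eq:contactinvariance} they keep $\curl u_e^C$ of contact type and send $(\gamma_1,\gamma_2)$ to non-degenerate null-homologous orbits of $\curl u_e^N$ with $\int_{\psi(\gamma_i)}(\psi^{-1})^*\alpha_0=\int_{\gamma_i}\alpha_0$ (the period integral over a null-homologous orbit being independent of the chosen primitive of $\iota_{\curl u_e^N}\mu$), we get $u_e^C\in C_{a,h,e}$ and that $\curl u_e^N$ meets the hypotheses of Theorem \ref{thm:C0obs}.

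For the topological part, $C_{a,h,e}$ is the intersection of $\mathcal{V}_{a,h,e}$ with the preimage of $\mathcal{C}_\mu$ under $\curl$, which is continuous from the $C^1$-topology to the $C^0$-topology by Lemma \ref{lem:curl}; since $\mathcal{C}_\mu=\mathcal{C}_\mu^+\sqcup\mathcal{C}_\mu^-$ is $C^0$-open by Lemma \ref{lem:Xcont}, $C_{a,h,e}$ is $C^1$-open in $\mathcal{V}_{a,h,e}$. Letting $\delta_0>0$ be the constant of Theorem \ref{thm:C0obs} applied to $X=\curl u_e^N$, I would set
$$ N_{a,h,e}:=\{u\in\mathcal{V}_{a,h,e}\,:\,\|\curl u-\curl u_e^N\|_{C^0}<\delta_0\}, $$
which is nonempty, $C^1$-open (again by Lemma \ref{lem:curl}), and such that $\curl u\notin\mathcal{C}_\mu$ for all $u\in N_{a,h,e}$ by Theorem \ref{thm:C0obs}. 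Together with the first paragraph this gives the theorem.

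The genuinely nontrivial content is entirely upstream: the robustness in Theorem \ref{thm:C0obs} is what makes $N_{a,h,e}$ open, and the constructions of Section \ref{s:data} are what make both sets nonempty at every energy $e\ge e_0$. The only step requiring care in the present argument is the second-paragraph bookkeeping, i.e.\ verifying that the pushforward coming from local vortex stretching simultaneously preserves all of the data defining $\mathcal{V}_{a,h,e}$, the contact type condition, and --- crucially --- the non-degeneracy of the two orbits and the signs of their period integrals, so that Theorem \ref{thm:C0obs} can be invoked at $\curl u_e^N$.
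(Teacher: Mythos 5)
Your proposal is correct and follows essentially the same route as the paper: Lemma \ref{lem:Hreeb} and Proposition \ref{prop:Hnonreeb} for the model vorticities, Theorem \ref{thm:localvortex} to hit every sufficiently large energy, Lemma \ref{lem:Xcont} plus Theorem \ref{thm:C0obs} combined with the continuity of $\curl$ (Lemma \ref{lem:curl}) for $C^1$-openness, and Helmholtz transport for invariance. The only (harmless, arguably cleaner) deviation is that you take $C_{a,h,e}$ to be the entire set of contact type initial conditions in $\mathcal{V}_{a,h,e}$ rather than a neighborhood of the single field $u_0$, and your bookkeeping for the pushforward (including the observation that the period integral over a null-homologous orbit is primitive-independent) matches the paper's verification.
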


\begin{proof}
For fixed homotopy class of vector fields $a$, real number $h\neq 0$ consider the set
 $$\mathcal{A}_{a,h}=\{X \in \mathfrak{X}_\mu^0(M)\mid 
 [X]=a, \mathcal{H}(X)=h\}.$$
By Lemma \ref{lem:Hreeb}, there is a contact type vector field $Y_0 \in \mathcal{A}_{a,h}$. By Theorem \ref{thm:localvortex}, for any large enough value of $e$ we can find a volume-preserving vector field $u_0$ such that $\curl(u_0)=F^*Y_{0}$ (we write it as a pull-back instead of a pushforward to simplify a bit the notation) for some volume-preserving diffeomorphism $F:M \rightarrow M$ isotopic to the identity. Thus, as was explained in subsection \ref{ss:contacttype}, $\curl(u_0)$ is of contact type as well and $u_t$ is a contact type solution according to Definition \ref{def:contactsolution}.


By Theorem \ref{thm:C0obs}, there is a $C^0$-neighborhood of $\curl (u_0)$ in $\mathfrak{X}_\mu^0(M)$ such that every vector field in such neighborhood is of contact type. By the continuity properties of the curl operator (Lemma \ref{lem:curl}) we conclude that there exist a $C^1$-neighborhood $\mathcal{B}$ of $u_0$ in $\mathfrak{X}_\mu(M)$ such that every $u\in \mathcal{B}$ satisfies that $\curl(u)$ is of contact type. We define the open set $C_{a,h,e}$ of $\mathcal{V}_{a,h,e}$ as
$$C_{a,h,e}= \mathcal{B}\cap V_{a,h,e}. $$

On the other hand, by Proposition \ref{prop:Hnonreeb} there exists $Y_1 \in \mathcal{A}_{a,h}$ satisfying the hypotheses of Theorem \ref{thm:C0obs}: the two form $\iota_{Y_1}\mu$ admits a primitive $\beta$ that integrates positively on a non-degenerate periodic orbit $\gamma_{1}$, and negatively on another non-degenerate periodic orbit $\gamma_{2}$. By Theorem \ref{thm:localvortex}, for any large enough value of $e$ we can find a volume-preserving vector field $u_1$ such that $\curl u_1=G^*Y_{1}$, for some volume-preserving diffeomorphism $G:M\rightarrow M$ isotopic to the identity. Let us show that $G^*Y_{1}$ also satisfies the hypotheses of Theorem \ref{thm:C0obs}. Arguing as in \eqref{eq:contactinvariance}, we have $\iota_{G^*Y_{1}}\mu=dG^{*}\beta$. The orbits $\tilde \gamma_1=G^{-1}(\gamma_1)$ and $\tilde \gamma_2=G^{-1}(\gamma_2)$ are non-degenerate periodic orbits of $G^*Y_{1}$ and we have
\begin{align*}
    \int_{\tilde \gamma_1} G^*\beta&= \int_{G(\tilde \gamma_1)}\beta=\int_{\gamma_1} \beta>0,\\
    \int_{\tilde \gamma_2} G^*\beta&= \int_{G(\tilde \gamma_2)}\beta=\int_{\gamma_2} \beta<0.
\end{align*}
We conclude that $G^*Y_{1}$ also satisfies the hypotheses of Theorem \ref{thm:C0obs}, and we deduce that there is $C^0$-neighborhood of $\curl u_1=G^*Y_{1}$ in $\mathfrak{X}_\mu^0(M)$ such that every vector field in this neighborhood is not of contact type. By the continuity properties of the curl operator, see Lemma \ref{lem:curl}, we conclude that there is a $C^1$-open neighborhood $N$ of $u_1$ in $\mathfrak{X}_\mu(M)$ such that every $u\in C$ satisfies that $\curl(u)$ is not of contact type. We define the open set $N_{a,h,e}$ of $\mathcal{V}_{a,h,e}$ as
$$N_{a,h,e}= C\cap V_{a,h,e}. $$

 We claim that $\varphi_t(C_{a,h,e}) \cap N_{a,h,e}=\emptyset$ for each value of $t$ where the flow of the Euler equation is defined. First, we have seen that the contact type (and the non contact type) property of an exact divergence-free vector field is preserved under volume-preserving transformations. The transport of vorticity \eqref{eq:Helm} tells us that if we look at the evolution of the Euler equations with initial condition $u_0$, then the vorticity at time $t$ is always diffeomorphic to the vorticity of $u_0$. Hence if there was some $v\in C_{a,h,e}$ and some $t$ such that $\varphi_t(v)\in N_{a,h,e}$, the vorticity of $\varphi_t(v)$ is simultaneously of contact type and not of contact type, thus reaching a contradiction.
 \end{proof}

 \begin{Remark}\label{rem:regularity}
While we have assumed our differential forms and vector fields to be $C^{\infty}$ throughout the proof, it is not hard to check that the same arguments provide, mutatis mutandis, an analog of Theorem \ref{thm:nonmix} for non-mixing in the $C^{1}$-topology in the space of $C^{k, s}$ velocity fields (and thus for $C^{k-1, s}$ vorticities), as long as $k\geq 1$ and $s>0$. Indeed, for example for $k=1$, the curl operator is a continuous and surjective map between $C^{1,s}$ vector fields and $C^{0,s}$ exact div-free vector fields (where the divergence is understood in the weak sense when $k=1$, or in the usual sense for $k>1$). By virtue of Lemma \ref{lem:C1bound}, it makes sense to define contact type vorticities of $C^{0, s}$-regularity as those with a $C^{1,s}$ primitive contact 1-form. Having clarified this, the discussion in Sections \ref{ss:ctvorticities} and \ref{ss:obs} applies straightforwardly (notice we only use one of the implications in McDuff's characterization, which works for low regularity two-forms), as do the constructions in Section \ref{s:data}, and then the proof in this section. Observe that $C^{1, s}$ is the lowest regularity for which local existence and uniqueness of the Euler flow $\varphi_t$ holds (and non-mixing results are of interest independently of the existence of finite-time singular solutions, which were recently proven to exist in $\R^{3}$ for this regularity \cite{Elgindi}). This improves the regularity for which non-mixing can be established, since in \cite{KKPS} it was needed that velocities are at least of $C^{4,s}$-regularity.
\end{Remark}




The proof of Theorem \ref{thm:nonmix} relies on the fact that the contact type property (or the lack of it) of the curl of the velocity field of the fluid is invariant by the Euler equations. We will now see that the isotopy class of the contact structure associated with a contact type vorticity field is also an invariant of $u_t$. In other words, the contact structure is ``transported", since it is inherent to the vorticity. Thus, when several isotopy classes of contact structures exist in a given homotopy class of plane fields, these produce several $C^1$-open sets in $\mathcal{V}_{a,h,e}$ that do not intersect when we look at their evolution by the Euler equations.

\begin{lemma}\label{lem:continv}
    Let $u_t$ be a contact type solution to the Euler equations. Write $\iota_{\curl u_0}\mu=d\alpha$ for a contact form $\alpha$. Then the isotopy class of the contact structure $\xi=\ker \alpha$ does not depend on the chosen one-form and is an invariant of $u_t$.
\end{lemma}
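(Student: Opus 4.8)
The plan is to separate the claim into two parts: (i) well-definedness at a fixed time, namely that the isotopy class of $\ker\alpha$ does not depend on the choice of contact primitive $\alpha$ of $\iota_{\curl u_t}\mu$; and (ii) invariance, namely that this isotopy class is the same for all $t$.

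For part (i), I would simply appeal to the third item of Lemma~\ref{lem:Xcont} (and the argument in its proof). If $\iota_{\curl u_t}\mu=d\alpha=d\tilde\alpha$ for two contact forms, then $X:=\curl u_t$ spans the common one-dimensional kernel of $d\alpha=d\tilde\alpha$, and since the helicity is independent of the primitive one has $\int_M\tilde\alpha\wedge d\alpha=\int_M\alpha\wedge d\alpha>0$; this forces $\alpha(X)$ and $\tilde\alpha(X)$ to be pointwise of the same (positive) sign, so the three-forms $\alpha\wedge d\alpha$, $\alpha\wedge d\tilde\alpha$, $\tilde\alpha\wedge d\alpha$, $\tilde\alpha\wedge d\tilde\alpha$ all induce the same orientation, the linear interpolation $\alpha_s=(1-s)\alpha+s\tilde\alpha$ is a path of contact forms, and Gray stability (Theorem~\ref{thm:gray}) gives that $\ker\alpha$ and $\ker\tilde\alpha$ are isotopic. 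Thus an isotopy class of contact structures is attached unambiguously to the contact type vector field $\curl u_t$; write it $[\xi_{\curl u_t}]$.

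For part (ii), I would use Helmholtz's transport of vorticity \eqref{eq:transport}, which gives $\curl u_t=(\phi_t)_*(\curl u_0)$, where $\phi_t$ is the Lagrangian flow of $u_t$: a well-defined volume-preserving diffeomorphism of $M$ isotopic to the identity for each $t$ in the interval of existence (no finite-time escape since $M$ is closed). Arguing as in \eqref{eq:contactinvariance}, from $\iota_{\curl u_0}\mu=d\alpha$ one obtains that $\iota_{\curl u_t}\mu$ admits a contact primitive which is a pullback of $\alpha$ under a diffeomorphism isotopic to the identity, so its kernel is the image of $\xi=\ker\alpha$ under a diffeomorphism isotopic to the identity, hence isotopic to $\xi$. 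Combining with part (i), $[\xi_{\curl u_t}]=[\xi_{\curl u_0}]$ for every $t$ in the existence interval, which is the asserted invariance.

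I do not expect a real obstacle: part (i) is essentially the content of Lemma~\ref{lem:Xcont} and part (ii) is the observation recorded around \eqref{eq:contactinvariance}. The only point that demands a little care is bookkeeping the push-forward/pull-back (and orientation) conventions so that the diffeomorphism appearing in part (ii) is manifestly isotopic to the identity and the resulting one-form is genuinely a contact form; both are immediate since contactness is an open, diffeomorphism-invariant condition and $\phi_t$ is the time-$t$ map of a flow started at the identity.
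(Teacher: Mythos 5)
Your proposal is correct and follows essentially the same route as the paper: part (i) is exactly the paper's appeal to the third item of Lemma~\ref{lem:Xcont}, and part (ii) is the paper's use of Helmholtz transport \eqref{eq:transport} to exhibit $(\phi_t^{-1})^*\alpha$ (equivalently, the pullback under the inverse Lagrangian flow) as a contact primitive of $\iota_{\curl u_t}\mu$ whose kernel is isotopic to $\ker\alpha$. Your extra care with the push-forward/pull-back bookkeeping is if anything slightly more precise than the paper's write-up, but it is not a different argument.
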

\begin{proof}
    The fact that the isotopy class of $\ker \alpha$ does not depend on the primitive is shown in Lemma \ref{lem:Xcont}. Transport of vorticity, in the formulation \eqref{eq:transport}, shows that
        $$\curl u_t= (\phi_t)_*(\curl u_0),$$
        or, equivalently, that if we write $\iota_{\curl u_t}=d\alpha_t$ with $\alpha_t=g(u_t,\cdot)$, we have
        $$ d\alpha_t=(\phi_t)^*(d\alpha_0). $$
        The family of one forms $\tilde \alpha_t=(\phi_t)^*\alpha_0$ satisfy $d\tilde \alpha_t=d\alpha_t$, and $\tilde \alpha_t$ is a contact form for each $t$. We deduce that $\ker \tilde \alpha_t$ is contact isotopic to $\ker \alpha_0$.
\end{proof}

Given $\xi$, we denote by $\mathcal{V}_{[\xi]}\subset \mathcal{V}_{a,h,e}$ (we do not specify $a,h,e$ to avoid overloading the notation) the $C^1$-open set of initial conditions whose curl is of contact type with a contact form defining a contact structure isotopic to $\xi$. Because of the invariance under the Euler equations, one might wonder whether Theorem \ref{thm:nonmix} wouldn't readily follow by setting $C_{a,h,e}=\mathcal{V}_{[\xi]}$ and $N_{a, h, e}=\mathcal{V}_{[\xi']}$ for $[\xi] \neq [\xi']$. The number of isotopy classes of contact structures on a given homotopy class $a$ of plane fields might vary wildly from one manifold to another, or from one homotopy class to another. In dimension three, though, it follows from a combination of strong results in contact topology \cite{El2,CGH} that all but finitely many homotopy classes of plane fields admit a single isotopy class of contact structures, so Lemma \ref{lem:continv} is clearly insufficient to deduce Theorem \ref{thm:nonmix}. In $S^3$ every homotopy class of plane fields has exactly one isotopy class of contact structures except the trivial homotopy class (the one given by the plane field orthogonal to the Hopf fibration), which has exactly two isotopy classes \cite{El}. On the other hand, in the three-torus there exists a particular homotopy class of plane fields $b$ that admits infinitely many isotopy classes of contact structures \cite{H2}. These define countably many invariant $C^1$ open sets in $\mathcal{V}_{a,h,e}$, where $a$ is fixed to be the homotopy class of vector fields in correspondence with $b$. 

Nevertheless, contact topology provides us with invariants way finer than simply the isotopy class of a contact structure, and these can potentially be used to obtain a better picture of the phase space of the Euler equations in $\mathcal{V}_{[\xi]}$. This is what we will explain in Section \ref{s:ech}.

\subsection{Stationary solutions in the three-sphere}

The KKPS family of stationary solutions to the Euler equations in the round sphere $(S^3,g_{std})$ was introduced in \cite{KKPS} and further studied in \cite{Sl}, where it was shown that the family is not isolated (in the sense that there are steady solutions not in the family but arbitrarily close to its elements in the $C^{k}$-norm). To describe the family, we look at $S^3$ as the set of unit vectors in $\R^4$ with coordinates $(x,y,z,\xi)$. Consider the Hopf and the anti-Hopf fields
\begin{align*}
    X_1&=-y\pp{}{x}+x\pp{}{y}+\xi\pp{}{z}-z\pp{}{\xi},\\
    X_2&=-y\pp{}{x}+x\pp{}{y}-\xi\pp{}{z}+z\pp{}{\xi}.
\end{align*}
Define the function $F=(x^2+y^2)|_{S^3}$. The KKPS family is given by vector fields of the form
$$ X=f(F) X_1+g(F)X_2,$$
where $f,g$ are smooth functions, is always a stationary solution to the Euler equations. Its Bernoulli function is given by $B=\int_0^FH(s)ds$ where $H(F)=ff'+gg'+4fg+(2F-1)(fg'+gf')$. 
\begin{lemma}\label{lem:S3ex}
    There exists a KKPS stationary solution to the Euler equations in $(S^3,g_{std})$ that satisfies the hypotheses of Theorem \ref{thm:C0obs}.
\end{lemma}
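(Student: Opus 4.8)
The plan is to exhibit an explicit member $X=f(F)X_1+g(F)X_2$ of the KKPS family, with \emph{non-constant} coefficients, and to verify the hypotheses of Theorem~\ref{thm:C0obs} taking as $\gamma_1,\gamma_2$ the two core circles $\gamma_1=\{x=y=0\}$ and $\gamma_2=\{z=\xi=0\}$ of the genus-one Heegaard splitting of $S^3$. Since $H^1(S^3)=H^2(S^3)=0$, every $\mu$-preserving field on $S^3$ is automatically exact divergence-free, every loop is null-homologous, and the integral $\int_\gamma\alpha$ of a primitive $\alpha$ of $\iota_X\mu$ over a loop $\gamma$ does not depend on the chosen primitive. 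Hence the only substantive points are that $X$ is non-vanishing, that $\gamma_1,\gamma_2$ are \emph{non-degenerate} periodic orbits of $X$, and that $\int_{\gamma_1}\alpha>0>\int_{\gamma_2}\alpha$ with $\gamma_i$ oriented by the flow of $X$.

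To compute everything I would pass to the action--angle coordinates $z_1:=x+iy=\sqrt F\,e^{i\chi}$, $z_2:=z+i\xi=\sqrt{1-F}\,e^{i\psi}$, valid on $S^3\setminus(\gamma_1\cup\gamma_2)$. There $X_1=\partial_\chi-\partial_\psi$, $X_2=\partial_\chi+\partial_\psi$, the round volume form is $\mu=\tfrac12\,dF\wedge d\chi\wedge d\psi$, so $X=(f+g)\,\partial_\chi+(g-f)\,\partial_\psi$ and $\iota_X\mu=-\tfrac12(f+g)\,dF\wedge d\psi+\tfrac12(g-f)\,dF\wedge d\chi$. With $P(F):=\int_0^F(f+g)$ and $R(F):=\int_0^F(g-f)$, the one-form
\[
\alpha=-\tfrac12\bigl(P(F)-P(1)\bigr)\,d\psi+\tfrac12 R(F)\,d\chi
\]
is a primitive of $\iota_X\mu$; the one delicate point is that it extends \emph{smoothly} across $\gamma_1$ and $\gamma_2$, because $R(0)=0$ and $P(F)-P(1)$ vanishes at $F=1$, so that the potentially singular terms become $R(F)\,d\chi=\tfrac{R(F)}{F}(x\,dy-y\,dx)$ and $(P(F)-P(1))\,d\psi=\tfrac{P(F)-P(1)}{1-F}(z\,d\xi-\xi\,dz)$, which are smooth. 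Restricting $\alpha$ to $\gamma_1,\gamma_2$ and noting that $X$ flows along $\gamma_1$ in the direction $\operatorname{sign}(g(0)-f(0))\,\partial_\psi$ and along $\gamma_2$ in the direction $\operatorname{sign}(f(1)+g(1))\,\partial_\chi$, one gets the closed formulas
\[
\int_{\gamma_1}\alpha=\pi\operatorname{sign}\!\bigl(g(0)-f(0)\bigr)\!\int_0^1(f+g),\qquad
\int_{\gamma_2}\alpha=\pi\operatorname{sign}\!\bigl(f(1)+g(1)\bigr)\!\int_0^1(g-f).
\]
For non-degeneracy, near each of $\gamma_1,\gamma_2$ the field $X$ is a combination of the commuting Killing fields $X_1,X_2$ with coefficients constant on the circle, so its flow is, on a transverse disk, an exact linear rotation; the linearized return map is the rotation by $2\pi\,\tfrac{f(0)+g(0)}{g(0)-f(0)}$ at $\gamma_1$ and by $2\pi\,\tfrac{g(1)-f(1)}{f(1)+g(1)}$ at $\gamma_2$, so both orbits are non-degenerate once these two ratios are irrational.

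It then remains to choose $f,g$ satisfying all constraints simultaneously. Constant coefficients are ruled out: for them the two displayed integrals are forced to have the same sign, so $g$ (say) must genuinely vary. One concrete choice is $f\equiv1$ and $g$ smooth with $g(0)=1+\sqrt2$, $g(1)=\sqrt2-1$, $g\equiv\sqrt2-1$ on $[\tfrac18,1]$, and monotone on $[0,\tfrac18]$ (so $\sqrt2-1\le g\le 1+\sqrt2$ everywhere). Then $X=(1+g)\partial_\chi+(g-1)\partial_\psi$ is non-vanishing (its two components never vanish simultaneously, and on $\gamma_1,\gamma_2$ it equals $\sqrt2\,\partial_\psi$, resp.\ $\sqrt2\,\partial_\chi$); one has $\operatorname{sign}(g(0)-1)=\operatorname{sign}(f(1)+g(1))=+1$, $\int_0^1(1+g)>0$ and $\int_0^1(g-1)=\int_0^1 g-1<0$ because $0<\int_0^1 g<1$, so $\int_{\gamma_1}\alpha>0>\int_{\gamma_2}\alpha$; and the return-map ratios are $\sqrt2+1$ and $1-\sqrt2$, both irrational. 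Hence $X$ satisfies all the hypotheses of Theorem~\ref{thm:C0obs}, proving the lemma.

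The main obstacle is really just the orientation and smoothness bookkeeping in the action--angle computation and in extending the primitive $\alpha$ over the two Hopf circles; once the closed expressions for $\int_{\gamma_i}\alpha$ and for the linearized return maps are in hand, the choice of $f,g$ is routine. Alternatively one can bypass the explicit $\alpha$ and evaluate $\int_{\gamma_i}\alpha=\int_{D_i}\iota_X\mu$ as the flux of $X$ through the hemispherical Seifert disks $D_1=\{y=0,\ x\ge0\}\cap S^3$ and $D_2=\{\xi=0,\ z\ge0\}\cap S^3$, slicing $D_i$ by the tori $\{F=\text{const}\}$ and using linearity in $X_1,X_2$; this produces the same formulas.
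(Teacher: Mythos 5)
Your proposal is correct and follows essentially the same route as the paper's proof: both work in Hopf/action--angle coordinates on the KKPS family, take the two core circles as the non-degenerate periodic orbits (with irrational rotation numbers ensuring non-degeneracy), write down an explicit primitive of $\iota_X\mu$ that extends smoothly over the circles, and choose the profile functions so that the two integrals have opposite signs. The only differences are cosmetic — you parametrize by $F$ rather than $s$ and package the sign conditions as closed-form integrals of $f\pm g$, whereas the paper adjusts auxiliary constants in the primitive — and your verification of all hypotheses of Theorem~\ref{thm:C0obs} is complete.
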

\begin{proof}
    Considering the complex numbers $(z_1,z_2)=(x+iy,z+i\xi)$ and the coordinates of $S^3$ given by $(z_1,z_2)|_{S^3}=(\cos s e^{i\phi_1},\sin se^{i\phi_2})$, with $s\in [0,\pi/2]$ and $\phi_1,\phi_2\in [0,2\pi]$, we have
$$ X_1=\pp{}{\phi_1}+\pp{}{\phi_2}, \quad X_2=\pp{}{\phi_1}-\pp{}{\phi_2}.  $$
The general KKPS solution $X$ is of the form
\begin{align*}
    X&=f(\cos^2s)X_1+ g(\cos^2s)X_2 \\
     &= \left(f(\cos^2s)+g(\cos^2)\right)\pp{}{\phi_1} + \left(f(\cos^2s)-g(\cos^2)\right)\pp{}{\phi_2}.
\end{align*}
We choose $f$ such that $f=2\cos^2s$ near $s=0$ and $f=2(1-\cos^2s)$ near $s=\pi/2$. Choose $g=g_1+g_2$ where 
\begin{enumerate}
    \item $g_1=2\tau \cos^2s$ near $s=0$, where $\tau$ is some irrational number,
    \item $g_1=2\tau(1-\cos^2s)$ near $s=\pi/2$,
    \item $g_2$ is such that $g_2=0$ near $s=0$ and equal to $1$ near $s=\pi/2$, and $\int_0^sg_2=C$ for some constant $C>0$.
\end{enumerate}
We can easily assume $f^2+g^2\neq 0$ everywhere. Observe that $\{s=0\}$ and $\{s=\pi/2\}$ define non-degenerate elliptic periodic orbits of $X$ (since we chose $\tau$ irrational), since $X|_{s=0}=2(1+\tau)\pp{}{\phi_1}$ and $X|_{s=\pi/2}=\pp{}{\phi_2}$.

Consider the one-form
$$ \alpha=\left(\frac{1}{2}\int_0^sg-fds - K\right) d\phi_1 +\left( \frac{1}{2}\int_0^s f+gds\right)\phi_2. $$

First, choosing $C$ large enough we can assume that $\frac{1}{2}\int_0^s f+gds>0$ near $s=\pi/2$. Secondly, once we have chosen $C$, we can choose $K>0$ such that $\frac{1}{2}\int_0^sg-fds - K=0$ at $s=\pi/2$. This ensures that $\alpha$ is well defined in $S^3$, since it the coefficient of $d\phi_1$ vanishes quadratically in $s$ near $s=\pi/2$ and the coefficient of $d\phi_2$ vanishes quadratically in $s$ near $s=0$. By construction, we have $\alpha(X)>0$ near $\{s=\pi/2\}$ and $\alpha(X)<0$ near $\{s=0\}$. It is straightforward to check that $\iota_X\mu=d\alpha$, where $\mu=-2\cos s \sin sd\phi_1d\phi_2ds$ is the induced Riemannian volume. We conclude that $\int_{\{s=0\}}\alpha<0$ and $\int_{\{s=\pi/2\}}\alpha>0$, where we have oriented the circles with the positive orientation of $X$. Since the periodic orbits are non-degenerate, we are in the hypotheses of Theorem \ref{thm:C0obs}.
\end{proof}

As a corollary, we deduce the non $C^0$-density of rotational Beltrami fields among stationary solutions to the Euler equations. Notice that, in particular, curl eigenfields (also known as strong Beltrami fields) are not $C^0$-dense in the set of stationary solutions to the Euler equations neither, since in this case $\curl X=\lambda X$ with $\lambda\neq 0$.

\begin{corollary}\label{cor:statdens}
Consider $S^3$ equipped with the standard round metric. There exist non-vanishing stationary solutions to the Euler equations whose $C^0$-neighborhood does not contain any rotational Beltrami field.
\end{corollary}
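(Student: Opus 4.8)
The plan is to combine Lemma~\ref{lem:S3ex} with Theorem~\ref{thm:C0obs}. By Lemma~\ref{lem:S3ex} there is a KKPS stationary solution $X$ on $(S^3,g_{std})$ which is nowhere vanishing and satisfies the hypotheses of Theorem~\ref{thm:C0obs}: writing $\iota_X\mu=d\alpha$, it has two non-degenerate null-homologous closed orbits on which $\alpha$ integrates with opposite signs. Since $H^2(S^3;\R)=0$, every divergence-free vector field on $S^3$ is exact divergence-free, so $X\in\mathfrak{X}_\mu^0(S^3)$ and Theorem~\ref{thm:C0obs} produces $\delta_0>0$ such that no $Y\in\mathfrak{X}_\mu^0(S^3)=\mathfrak{X}_\mu(S^3)$ with $\norm{X-Y}_{C^0}<\delta_0$ is of contact type. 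Shrinking $\delta_0$ below $\min_{S^3}|X|$ we may also assume every such $Y$ is nowhere vanishing, which automatically excludes from the $\delta_0$-ball any Beltrami field with a zero (such a field is never of contact type, so Theorem~\ref{thm:C0obs} would not by itself rule it out).

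The remaining, and main, point is that every nowhere-vanishing rotational Beltrami field on $S^3$ lies in $\mathcal{C}_\mu$. Let $\curl v=fv$ with $f$ and $v$ nowhere vanishing; since $S^3$ is connected we may assume $f>0$. As in Remark~\ref{rem:rotBelt}, $v^\flat$ is a contact form because $v^\flat\wedge d(v^\flat)=v^\flat\wedge\iota_{\curl v}\mu=f|v|^2\mu\neq 0$; moreover $\operatorname{div}(fv)=\operatorname{div}(\curl v)=0$ forces $v(f)=0$, so $1/f$ is a first integral of $v$ and $v=|v|^2R$ for $R$ the Reeb field of $v^\flat$. For \emph{strong} Beltrami fields, where $f\equiv\lambda$ is a nonzero constant, this already gives $v\in\mathcal{C}_\mu$ at once: $\iota_v\mu=d\!\left(\tfrac1\lambda v^\flat\right)$ and $\tfrac1\lambda v^\flat$ is a contact form. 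In general $\iota_v\mu=\tfrac1f\,d(v^\flat)$ is exact on $S^3$, and since $d(\tfrac1f)\wedge v^\flat$ is closed (using $df\wedge\iota_v\mu=v(f)\mu=0$), hence exact, say $-d(\tfrac1f)\wedge v^\flat=d\sigma$, the one-form $\alpha:=\tfrac1f v^\flat+\sigma$ is a primitive of $\iota_v\mu$. I would then apply McDuff's criterion: any nonzero exact foliation cycle $z$ of $v$ is a foliation cycle of $R$, the value $z(\alpha)$ is independent of the chosen primitive (as $H^1(S^3)=0$ and $z$ is closed), and since $v^\flat(R)=1$ and $1/f$ is bounded below by a positive constant, $z$ pairs strictly positively with $\tfrac1f v^\flat$; checking that the $\sigma$-contribution cannot cancel this forces $z(\alpha)\neq 0$ for all such $z$, so $\iota_v\mu$ is of contact type.

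Putting the two pieces together: no nowhere-vanishing rotational Beltrami field on $S^3$ lies within $C^0$-distance $\delta_0$ of $X$, since such a field would be simultaneously of contact type and, by Theorem~\ref{thm:C0obs}, not of contact type; Beltrami fields with zeros are excluded as noted above. As $X$ is a nowhere-vanishing stationary solution, this proves the Corollary, and in particular curl eigenfields (strong Beltrami fields, $\curl v=\lambda v$ with $\lambda\neq 0$) are not $C^0$-dense among stationary solutions. The step I expect to be the main obstacle is the last part of the second paragraph — verifying that a general rotational Beltrami field with \emph{non-constant} $f$ lies in $\mathcal{C}_\mu$, i.e.\ that the correction $\sigma$ does not spoil the contact condition — which genuinely requires McDuff's foliation-cycle characterization rather than a naive estimate on $\alpha(v)$; the strong Beltrami case, which already disproves $C^0$-density of curl eigenfields, is by contrast immediate.
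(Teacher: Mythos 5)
Your architecture coincides with the paper's: Lemma \ref{lem:S3ex} supplies a non-vanishing KKPS stationary solution satisfying Cieliebak's criterion, Theorem \ref{thm:C0obs} converts this into a $C^0$-ball containing no field of $\mathcal{C}_\mu$, and the corollary follows once one knows that every non-vanishing rotational Beltrami field on $S^3$ lies in $\mathcal{C}_\mu$. Your treatment of the constant-factor case (curl eigenfields) is complete and matches Remark \ref{rem:rotBelt}, and your extra precaution about Beltrami fields with zeros is harmless, since the statement only concerns non-vanishing rotational Beltrami fields.

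The gap is exactly where you flagged it, and the McDuff argument as you sketch it does not close. For an exact foliation cycle $z=\partial w$, the pairing $z(\alpha)$ with any primitive $\alpha$ of $\iota_v\mu$ equals $w(\iota_v\mu)$ and is therefore independent of the choice of primitive; so splitting $\alpha=\tfrac1f v^\flat+\sigma$ and arguing that "the $\sigma$-contribution cannot cancel" the positive term $z(\tfrac1f v^\flat)$ is not an argument --- there is no a priori control on $z(\sigma)$. Concretely, for $z$ a null-homologous closed orbit $\gamma=\partial S$ one has $z(\alpha)=\int_S \tfrac1f\, d(v^\flat)$, and although $\int_S d(v^\flat)=\int_\gamma v^\flat>0$, the restriction of $d(v^\flat)$ to a Seifert surface $S$ is not an area form, so the positive weight $\tfrac1f$ could in principle drive this integral to zero or below. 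Hence your proposal, as written, proves the corollary for strong Beltrami fields but leaves open the rotational case with non-constant proportionality factor. For comparison, the paper's own proof disposes of this point in one sentence, asserting that $\iota_Y\mu=f\,d\alpha$ with $f$ non-vanishing and $\alpha=Y^\flat$ contact is "always of contact type" and pointing to Remark \ref{rem:rotBelt} (which, strictly speaking, shows that $\curl Y$, not $Y$ itself, is of contact type). So you have correctly isolated the one step that carries real content; completing it requires actually producing a contact primitive of $\tfrac1f\,d(v^\flat)$, or verifying McDuff's criterion for this two-form, rather than a term-by-term estimate.
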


\begin{proof}
In $S^3$, any non-vanishing rotational Beltrami field $Y$ satisfies $\iota_Y\mu=f d\alpha$, where $f$ is everywhere non-vanishing and $\alpha=Y^\flat$. In particular, $\iota_Y\mu$ is always of contact type, see also Remark \ref{rem:rotBelt}. By Lemma \ref{lem:S3ex} and Theorem \ref{thm:C0obs}, there exists a stationary solution to the Euler equations whose $C^0$-neighborhood does not contain divergence-free vector fields of contact type. 
\end{proof}

\begin{corollary}\label{cor:KKPSint}
    There exists a KKPS stationary solution such that every stationary solution to the Euler equations in a $C^0$-neighborhood necessarily admits a non-trivial smooth first integral.
\end{corollary}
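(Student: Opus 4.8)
The plan is to deduce Corollary~\ref{cor:KKPSint} from Lemma~\ref{lem:S3ex} and Theorem~\ref{thm:C0obs}, together with the elementary fact that the Bernoulli function of a stationary Euler flow is always a first integral. First I would fix the KKPS stationary solution $X$ provided by Lemma~\ref{lem:S3ex}: it is nowhere vanishing, and since $H^2(S^3)=0$ it belongs to $\mathfrak{X}_\mu^0(S^3)$. Let $\delta_0>0$ be the constant given by Theorem~\ref{thm:C0obs} applied to $X$, and shrink it if necessary so that also $\delta_0<\min_{S^3}|X|$. Then every vector field $\tilde X$ with $\|\tilde X-X\|_{C^0}<\delta_0$ is again nowhere zero, and if it is divergence-free it is (again by $H^2(S^3)=0$) exact divergence-free, so Theorem~\ref{thm:C0obs} applies to it and gives $\tilde X\notin\mathcal{C}_\mu$.

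Now let $\tilde X$ be an arbitrary stationary solution of the Euler equations with $\|\tilde X-X\|_{C^0}<\delta_0$, and let $B$ be its Bernoulli function, defined by $\iota_{\tilde X}\,d\tilde X^\flat=-dB$. Since $\tilde X(B)=-\iota_{\tilde X}\iota_{\tilde X}\,d\tilde X^\flat=0$, the function $B$ is a smooth first integral of $\tilde X$; if it is non-constant we are done. Otherwise $\iota_{\tilde X}\,d\tilde X^\flat=0$, and writing $d\tilde X^\flat=\iota_{\curl\tilde X}\,\mu$ this means $\curl\tilde X$ is pointwise proportional to $\tilde X$. As $\tilde X$ is nowhere zero there is a unique smooth $f$ with $\curl\tilde X=f\tilde X$, and taking divergences and using $\operatorname{div}\curl=0=\operatorname{div}\tilde X$ gives $\tilde X(f)=0$, so $f$ is another smooth first integral, non-trivial unless $f\equiv\lambda$ is constant. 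If $\lambda=0$ then $\tilde X^\flat$ is closed, hence exact on $S^3$, so $\tilde X=\nabla h$ with $\Delta h=0$, forcing $\tilde X\equiv 0$, impossible; if $\lambda\neq0$ then $\iota_{\tilde X}\,\mu=\lambda^{-1}\,d\tilde X^\flat=d(\lambda^{-1}\tilde X^\flat)$ with $\lambda^{-1}\tilde X^\flat$ a contact form, because $(\lambda^{-1}\tilde X^\flat)\wedge d(\lambda^{-1}\tilde X^\flat)=\lambda^{-1}|\tilde X|^2\mu\neq0$, so $\tilde X\in\mathcal{C}_\mu$, contradicting the previous paragraph. Hence the constant case does not occur and $\tilde X$ always admits a non-trivial smooth first integral.

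I expect the only substantial point to be the exclusion of stationary solutions near $X$ with constant Bernoulli function and constant curl eigenvalue, i.e. strong Beltrami fields; this is precisely what Lemma~\ref{lem:S3ex} and Theorem~\ref{thm:C0obs} were set up to handle, and the $C^0$-stability of the obstruction is what makes the argument work in a genuine $C^0$-neighborhood. The one technical subtlety is that a nearby stationary solution could a priori vanish somewhere, which would break the definition of $f$; this is dealt with cleanly by also imposing $\delta_0<\min_{S^3}|X|$, which forces all fields in the neighborhood to be nowhere zero. Everything else reduces to the short identities above, with no new analytic input beyond what the paper already contains.
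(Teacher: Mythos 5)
Your proof is correct and follows essentially the same route as the paper's: Bernoulli function as a first integral, then the proportionality factor $f$ of the resulting Beltrami field, and finally a contradiction with Theorem \ref{thm:C0obs} when $f$ is a nonzero constant because the field is then of contact type. The only differences are cosmetic refinements the paper leaves implicit — you shrink $\delta_0$ to keep nearby fields nowhere vanishing (needed to define $f$ smoothly) and you rule out $\lambda=0$ by the harmonic-gradient argument where the paper simply cites that $0$ is not a curl eigenvalue.
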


\begin{proof}
    A stationary solution in the $C^0$-neighborhood of the solution constructed in Lemma \ref{lem:S3ex} is not of contact type by Theorem \ref{thm:C0obs}. Let $Y$ be a stationary solution in that neighborhood and assume it admits no non-trivial smooth first integral. We know that $\iota_Yd\alpha$ is exact, where $\alpha=Y^\flat$. If $\iota_Yd\alpha=dB$ with $B$ non-constant, then $B$ is a non-trivial first integral of $Y$.  Otherwise, the field $Y$ is a Beltrami field and $\curl Y=fY$ for some $f\in C^\infty(S^3)$. The proportionality factor $f$ is always a first integral of $Y$, so $f$ has to be constant, say equal to some $c\in \mathbb{R}$. This constant cannot be zero, because zero is not an eigenvalue of the curl operator, and hence $\alpha\wedge d\alpha=c\mu \neq 0$. This shows that $Y$ is a contact type (stationary) solution to the Euler equations, since its vorticity $\curl Y$ is such that $\iota_{\curl Y}\mu=d\alpha$ for a contact type two-form $d\alpha$, and we reach a contradiction. So $Y$ always admits a (non-trivial) smooth first integral.
\end{proof}

\begin{Remark}
The proof of Corollary \ref{cor:statdens} does not make use of the ambient geometry, except for finding a particular stationary solution. A natural question is if there is a simple proof that this particular solution cannot be $C^0$-approximated by rotational Beltrami fields that exploits the fact that the ambient metric is the round one.
\end{Remark}

\section{Contact invariants in the Euler equations}\label{s:ech}
 
We have shown in Section \ref{s:main} that any isotopy class of contact structures $[\xi]$ in a three-manifold defines a $C^1$-open set $\mathcal{V}_{[\xi]}\subset \mathcal{V}_{a,h,e}$ that is invariant by the Euler equations, namely the set of all those volume-preserving vector fields in $\mathcal{V}_{a,h,e}$ whose curl lies in $\mathcal{C}_\mu^+ (M,\xi)$ (or $\mathcal{C}_\mu^-(M,\xi)$ if $h<0$). In this section, we deepen the connection between contact geometry and hydrodynamics by associating invariants coming from the field of contact topology and Seiberg-Witten theory to contact type solutions of the Euler equations. In this setting the natural topology in $\mathcal{V}_{a,h,e}$ will be the topology induced by the $C^{1,s}$-topology in $\mathfrak{X}_\mu(M)$, for which the set $\mathcal{V}_{[\xi]}$ is also an open set (invariant by the Euler equations).

\subsection{Embedded contact homology spectral invariants}\label{ss:ECH}

Given a contact structure $\xi$ and a defining contact form $\alpha$ on a three-dimensional manifold $M$, the action spectrum $\mathcal{A}(M,\alpha)$ of $\alpha$ is the set of (possibly non-minimal) periods of the closed orbits of the Reeb field $R$ of $\alpha$, i.e.
$$\mathcal{A}(M,\alpha)=\bigg\{\int_\gamma \alpha  \enspace | \enspace \gamma \in C^\infty(S^1, M), \dot \gamma=R(\gamma) \bigg\}  \subset \mathbb{R}_+.$$
We will now briefly introduce embedded contact homology and the ECH spectral invariants, we refer to \cite{Hut2, Hut} for more details. Let us recall that a periodic orbit of a vector field is non-degenerate if the eigenvalues of the linearized Poincar\'e map of the periodic orbit are not roots of the unity. Otherwise, we say that the periodic orbit is degenerate. Momentarily, we will assume that the contact form $\alpha$ is non-degenerate, meaning that all the periodic orbits of the Reeb field $R$ defined by $\alpha$ are non-degenerate. For any element $\Gamma \in H_1(M;\mathbb{Z})$, one can define a homology of a chain complex generated over $\mathbb{Z}/2$ by sums of pairs $(\gamma_i,m_i)$, where $\gamma_i$ are distinct embedded closed orbits of $R$ and $m_i$ are positive integers ($m_i=1$ if $\gamma_i$ is hyperbolic) and whose sum lies in the homology class $\Gamma$. This is called ``embedded contact homology" and is denoted by $\ech(M,\alpha,\Gamma)$. This homology depends only on $\xi$ and hence is written as well as $\ech(M,\xi,\Gamma)$, so that for any other choice of non-degenerate contact form $\alpha_1=f\alpha$ for some positive function $f\in C^\infty(M)$, there is a natural isomorphism 
\begin{equation}\label{eq:ECHiso1}
    \ech(M,\alpha,\Gamma)\cong \ech(M,\alpha_1,\Gamma)\cong \ech(M,\xi,\Gamma). 
\end{equation}
For any $L>0$, there exists also a  well-defined filtered embedded contact homology $\ech^L(M,\alpha,\Gamma)$, where one considers only the homology of the subcomplex generated by orbits whose period is strictly less than $L$. There is a natural inclusion $$\iota_L : \ech^L(M,\alpha,\Gamma) \longrightarrow \ech (M,\alpha,\Gamma).$$ Out of this inclusion, one can extract different spectral invariants of $\alpha$, such as spectral invariants defined by non-vanishing ECH classes, or the ``full ECH spectrum" \cite[Section 3]{Hut}. We will use the latter for technical reasons.
\begin{defi}\label{def:spectral}
    For each positive integer $k>0$, we define $\widetilde c_k(M,\alpha)$ to be the infimum over all $L\in \mathbb{R}$ such that the image of $\iota_L: \ech^L(M,\alpha, 0) \rightarrow \ech (M,\alpha,0) $ has dimension at least $k$.
\end{defi}
These numbers extend continuously to any (possibly degenerate) contact form, i.e. contact forms whose Reeb field possibly admits degenerate periodic orbits. The full ECH spectrum can be defined as well for non-vanishing $\Gamma$, but we stick to $\Gamma=0$ because this hypothesis will be needed anyway for our purposes (see Lemma \ref{lem:spectwoform} below).

 The spectral invariants satisfy several properties, we mention the ones that are relevant to this work.
\begin{itemize}
\vspace{0.2cm}
    \item[-] ($C^0$-Continuity) For any sequence $f_j\in C^\infty(M, \mathbb{R}_+)$ such that $\norm{f_j-f}_{C^0}\rightarrow 0$ for some $f\in C^\infty(M,\mathbb{R}_+)$, we have $\widetilde c_k (M,f_j\alpha)\rightarrow \widetilde c_k (M,f\alpha)$.
    \vspace{0.2cm}
    \item[-] (Spectrality) For any $k$, if the number $c_k(\alpha)$ is finite, then it is a linear combination, with non-negative integer coefficients, of elements in $\mathcal{A}(M,\alpha)$. 
\end{itemize}
\vspace{0.2cm}
Given a contact form such that $\xi=\ker \alpha$ has torsion first Chern class $c_1(\xi)$ (which is equal to the Euler class of $\xi$ as a plane subbundle of $TM$), for each $k$ we have $c_k(M,\alpha)<\infty$, and some of them are non-zero, see \cite[Remark 3.2]{Hut}. Indeed, the group $\ech(M,\alpha, \Gamma)$ is infinitely generated for any $\Gamma$ such that $c_1(\xi)+2 \operatorname{PD}[\Gamma]\in H^2(M,\mathbb{Z})$ is torsion (here $\operatorname{PD}[\Gamma]$ denotes the Poincaré dual of $\Gamma$). This follows from the isomorphism between ECH and Seiberg-Witten Floer cohomology proved by Taubes \cite{Tau}. In particular, if the first Chern class of $\xi$ is torsion then $\ech(M,\xi,0)$ is infinitely generated, and hence it cannot happen that every spectral invariant vanishes.
 \begin{Remark}\label{rem:SW}
 The fact that $c_1(\xi)$ is torsion only depends on the homotopy class of plane fields defined by $\xi$. Thus, in our context, if a certain vorticity vector field $X$ is contact type with associated contact structure $\xi$, the fact that $\xi$ has torsion first Chern class only depends on the homotopy class $a$ of the vector field $X$, which determines the homotopy class $p_a$ of $\xi$.
\end{Remark}

\begin{Remark}\label{rem:SWCANON} Let us point out as well that more than Equation \eqref{eq:ECHiso1} holds: for any two contact forms $\alpha_1, \alpha_2$ that define isotopic contact structures $\xi_1, \xi_2$ and any $\Gamma$, the embedded contact homology groups are canonically isomorphic. This holds from the fact that the isomorphism between ECH and Seiberg-Witten Floer cohomology is canonical as shown by Taubes, and the Seiberg-Witten Floer cohomology is a topological invariant that does not depend on the specific contact form defining any contact structure in $[\xi]$. Thus, in particular, one can write $\ech(M, [\xi], 0)$, meaning that via the isomorphism with Seiberg-Witten Floer cohomology, one can unambiguously identify all the ECH groups for any contact form defining a contact structure in $[\xi]$.
\end{Remark}
\subsection{Spectral invariants of contact type solutions and continuity}

 The following lemma, which is a direct consequence of \cite[Lemma 3.9]{Hut}, is key for our purposes, as it shows that when $\Gamma=0$, spectral invariants of a contact primitive of a contact type two-form does not depend on the specific choice of primitive.
\begin{lemma}\label{lem:spectwoform}
Let $d\alpha$ be a contact type two-form in $M$. Let $\alpha$ and $\alpha'$ be two contact primitives of $d\alpha$. Then $\widetilde c_k(\alpha)=\widetilde c_k(\alpha')$ for any positive integer $k$.
\end{lemma}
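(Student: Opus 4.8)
The plan is to reduce the statement to the $\Gamma = 0$ invariance contained in \cite[Lemma 3.9]{Hut}, everything else being two elementary bookkeeping observations. Since $d\alpha = d\alpha'$, the one-form $\beta := \alpha' - \alpha$ is closed. Because $\alpha\wedge d\alpha\neq 0$, the two-form $d\alpha = d\alpha'$ has one-dimensional kernel at every point, so the Reeb fields $R_\alpha$ and $R_{\alpha'}$ are pointwise proportional; writing $\iota_X\mu = d\alpha$ one has $R_\alpha = X/\alpha(X)$ and $R_{\alpha'} = X/\alpha'(X)$, and the argument from the third part of the proof of Lemma \ref{lem:Xcont} --- where $\int_M\alpha\wedge d\alpha = \int_M\alpha'\wedge d\alpha'$ forces $\alpha(X)$ and $\alpha'(X)$ to be nowhere vanishing and of the same sign --- gives $R_{\alpha'} = R_\alpha/\alpha'(R_\alpha)$ with $\alpha'(R_\alpha) = \alpha'(X)/\alpha(X) > 0$. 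Thus $R_\alpha$ and $R_{\alpha'}$ have exactly the same unparametrized closed orbits, and such an orbit is non-degenerate for $\alpha$ if and only if it is for $\alpha'$, since the linearized return map on a transverse disk is insensitive to the speed of the flow.

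First I would treat non-degenerate $\alpha$ (hence non-degenerate $\alpha'$). By Lemma \ref{lem:Xcont} the contact structures $\ker\alpha$ and $\ker\alpha'$ are isotopic, so by Remark \ref{rem:SWCANON} the groups $\ech(M,\alpha,0)$ and $\ech(M,\alpha',0)$ are canonically identified, and by the previous paragraph the underlying chain complexes have the very same set of generators. The crucial point is that the ECH action of any generator is unchanged: for an orbit set $\Theta = \{(\gamma_i, m_i)\}$ with $\sum_i m_i[\gamma_i] = 0$ in $H_1(M;\mathbb{Z})$,
\[
\mathcal{A}_{\alpha'}(\Theta) - \mathcal{A}_\alpha(\Theta) = \sum_i m_i\int_{\gamma_i}\beta = \Big\langle [\beta],\ \sum_i m_i[\gamma_i]\Big\rangle = 0 ,
\]
since the de Rham class $[\beta]$ pairs trivially with the null-homologous cycle $\sum_i m_i[\gamma_i]$ --- this is exactly where the hypothesis $\Gamma = 0$ enters. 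Hence the two action filtrations coincide, and \cite[Lemma 3.9]{Hut} identifies, for every $L$, the filtered groups $\ech^L(M,\alpha,0)$ and $\ech^L(M,\alpha',0)$ compatibly with the inclusion maps $\iota_L$ into total ECH; Definition \ref{def:spectral} then gives $\widetilde c_k(\alpha) = \widetilde c_k(\alpha')$ for all $k$.

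For possibly degenerate primitives I would pass to a limit: choose $f_n \in C^\infty(M,\mathbb{R}_+)$ with $f_n \to 1$ in $C^0$ so that both $f_n\alpha$ and $f_n\alpha + \beta$ are non-degenerate contact forms (a generic choice works, non-degeneracy being a residual condition, or one simply invokes \cite[Lemma 3.9]{Hut} in its form valid for degenerate forms). These are two contact primitives of the single contact-type two-form $d(f_n\alpha)$, so the non-degenerate case yields $\widetilde c_k(f_n\alpha) = \widetilde c_k(f_n\alpha + \beta)$; letting $n\to\infty$ and using the $C^0$-continuity of the ECH spectral invariants \cite{Hut} gives $\widetilde c_k(\alpha) = \widetilde c_k(\alpha')$. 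The one genuinely nontrivial input here is \cite[Lemma 3.9]{Hut} itself --- that in the $\Gamma = 0$ regime the filtered ECH groups and the maps $\iota_L$ depend on the contact form only through its exterior derivative --- and the main point needing care is the precise form in which it is invoked; the Reeb reparametrization and the vanishing of the period pairing on null-homologous orbit sets are routine.
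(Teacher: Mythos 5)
Your argument is correct and takes essentially the same route as the paper, whose entire proof of this lemma is the citation of \cite[Lemma 3.9]{Hut}: the real content is that for $\Gamma=0$ the filtered ECH data depend on the contact form only through its exterior derivative, and your supporting observations (closedness of $\alpha'-\alpha$, the Reeb fields being positive reparametrizations of one another with the same non-degenerate orbits, the vanishing of $\sum_i m_i\int_{\gamma_i}(\alpha'-\alpha)$ on null-homologous orbit sets, and the canonical identification of the total groups via Remark \ref{rem:SWCANON}) correctly unpack why that lemma applies. The only step that needs a touch more care is the degenerate case: the forms $f_n\alpha+\beta$ converge to $\alpha'$ but are not conformal rescalings $g_n\alpha'$ of it, so the $C^0$-continuity property as stated does not literally apply to that sequence; this is repaired either by invoking \cite[Lemma 3.9]{Hut} directly in its degenerate form, or by choosing $f_n\to 1$ in $C^\infty$ and using the quantitative Gray-stability argument from the proof of Theorem \ref{thm:contspec} to write $\Phi_n^{*}(f_n\alpha+\beta)=h_n\alpha'$ with $\|h_n-1\|_{C^0}\to 0$.
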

Hence, the full ECH spectrum is an invariant of the contact type two-form. This is an important property in the setting of symplectic embeddings, where ECH spectral invariants can be used to define symplectic capacities of domains with contact boundary. Definition \ref{def:spectral} and Lemma \ref{lem:spectwoform} show that the following notion is well-defined.
\begin{defi}\label{spectral}
    Let $u_0$ be a contact type initial condition to the Euler equations, i.e. $\curl u_0$ is a contact type vector field. Let $\alpha$ be any contact primitive of $d\lambda$, where $\lambda=g(u_0,\cdot)$. For any positive integer $k$, the spectral invariant $c_k(u_0)$ is defined as $c_k(u_0):=\widetilde c_k(\alpha)$.
\end{defi}
Hence, each spectral invariant defines a functional 
$$c_k: \mathcal{V}_{[\xi]}\longrightarrow \mathbb{R}. $$
The main relevance of these functionals is the following theorem, which establishes that they are conserved quantities of the Euler equations which are continuous in suitable topologies.

\begin{theorem}\label{thm:contspec}
   Let $\mathcal{V}_{[\xi]}\subset \mathcal{V}_{a,h,e}$ be the $C^1$-open set of $\mathcal{X}_\mu(M)$ invariant by the Euler equations defined by an isotopy class of contact structures $[\xi]$ with torsion first Chern class. For each non-vanishing positive integer $k$, the functional
\begin{align*}
     c_k: \mathcal{V}_{[\xi]} &\longrightarrow \mathbb{R}_{\geq 0}\\
                u &\longmapsto c_k(u)
\end{align*}
is continuous with respect to the $C^{1,s}$-topology in $\mathcal{V}_{[\xi]}$ for each $s\in (0,1]$. Furthermore, it is a first integral of the Euler equations in $\mathcal{V}_{[\xi]}$.
\end{theorem}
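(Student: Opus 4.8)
The plan is to establish the two assertions of Theorem~\ref{thm:contspec} separately, using the building blocks already assembled: the $C^0$-continuity of $\widetilde{c}_k$ on contact forms, the independence of $\widetilde{c}_k$ on the chosen contact primitive (Lemma~\ref{lem:spectwoform}), the continuity of $\curl$ and of the primitive-extraction Lemma~\ref{lem:C1bound}, and the invariance of the contact type condition and its isotopy class under volume-preserving diffeomorphisms (Lemma~\ref{lem:continv}).

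\medskip
\textbf{Continuity in the $C^{1,s}$-topology.} Fix $u\in\mathcal V_{[\xi]}$ and let $u'\in\mathcal V_{[\xi]}$ be $C^{1,s}$-close to $u$. Write $\lambda=g(u,\cdot)$ and $\lambda'=g(u',\cdot)$, so $\|\lambda-\lambda'\|_{C^{1,s}}$ is small; hence $d\lambda-d\lambda'$ is an exact two-form that is small in $C^{0,s}$. By Lemma~\ref{lem:C1bound} we obtain a primitive $\beta$ of $d\lambda-d\lambda'$ with $\|\beta\|_{C^{1,s}}\leq C(s)\|d\lambda-d\lambda'\|_{C^{0,s}}$ small. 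Let $\alpha$ be a contact primitive of $d\lambda$ defining a contact structure in $[\xi]$ (which exists because $u\in\mathcal V_{[\xi]}$). Then $\alpha-\beta$ is a primitive of $d\lambda'$, and for $u'$ close enough to $u$ it is still a contact form, $C^{0}$-close to $\alpha$ — in fact, because $\|\beta\|_{C^0}$ is small, one can write $\alpha-\beta$ in the form $f\,\tilde\alpha$ with $\tilde\alpha$ a contact primitive of $d\lambda'$ defining a contact structure isotopic to $\ker\alpha$, and $f\in C^\infty(M,\mathbb{R}_+)$ close to the constant function $1$ in $C^0$ (this is precisely the argument already run in the proof of Lemma~\ref{lem:Xcont}, using Gray stability to see the contact structures stay in the same isotopy class, hence in $[\xi]$). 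Now, by Lemma~\ref{lem:spectwoform}, $c_k(u')=\widetilde c_k(\alpha-\beta)=\widetilde c_k(f\tilde\alpha)$, and by the $C^0$-continuity property of the ECH spectral invariants, $\widetilde c_k(f\tilde\alpha)\to\widetilde c_k(\alpha)=c_k(u)$ as $f\to 1$ in $C^0$, i.e. as $u'\to u$ in $C^{1,s}$. This proves continuity. The one subtlety to be careful about is that the $C^0$-continuity property of $\widetilde c_k$ is stated for rescalings $f\alpha$ of a \emph{fixed} contact form $\alpha$, whereas here the underlying contact form $\tilde\alpha$ also varies with $u'$; but since $\tilde\alpha$ stays in the isotopy class $[\xi]$ one can first compare $\widetilde c_k(f\tilde\alpha)$ to $\widetilde c_k(\tilde\alpha)$ via the rescaling continuity (with $f$ close to $1$) and then observe that $\widetilde c_k(\tilde\alpha)=\widetilde c_k(\alpha)$ needs a genuine continuity statement on the space of contact forms in $[\xi]$ with their $C^0$-distance — which is exactly what the $C^0$-continuity of ECH capacities provides once one notes that any two $C^0$-close contact forms in the same isotopy class can be connected by a short path along which the capacities vary continuously (alternatively, one invokes directly the form of the continuity statement in \cite{Hut} valid for $C^0$-small perturbations within a fixed isotopy class). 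I expect this bookkeeping — making the $C^0$-continuity of $\widetilde c_k$ apply when \emph{both} the conformal factor and the contact form move — to be the main technical point, though it is genuinely routine given the cited results.

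\medskip
\textbf{First integral property.} Let $u_t=\varphi_t(u_0)$ be a contact type solution with $u_0\in\mathcal V_{[\xi]}$, and let $\phi_t$ be the Lagrangian flow, so that by transport of vorticity \eqref{eq:transport}, $\iota_{\curl u_t}\mu=(\phi_t)^*(\iota_{\curl u_0}\mu)$, i.e. $d\alpha_t=(\phi_t)^*(d\alpha_0)$ where $\alpha_t=g(u_t,\cdot)$. As in the proof of Lemma~\ref{lem:continv}, the one-forms $\tilde\alpha_t:=(\phi_t)^*\alpha_0$ satisfy $d\tilde\alpha_t=d\alpha_t$ and are contact forms, with $\ker\tilde\alpha_t$ contact isotopic to $\ker\alpha_0$; in particular $\tilde\alpha_t$ is a contact primitive of $d\alpha_t$. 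Since $\phi_t$ is a diffeomorphism, the Reeb dynamics of $\tilde\alpha_t$ are conjugate to those of $\alpha_0$, so the action spectra coincide, $\mathcal A(M,\tilde\alpha_t)=\mathcal A(M,\alpha_0)$, and more precisely the ECH chain complexes (and the filtered inclusions $\iota_L$) are carried isomorphically to each other by the pullback $\phi_t^*$; this is the standard naturality of ECH under diffeomorphisms. Therefore $\widetilde c_k(\tilde\alpha_t)=\widetilde c_k(\alpha_0)$ for all $t$ and all $k$. Finally, by Lemma~\ref{lem:spectwoform}, $c_k(u_t)=\widetilde c_k(\tilde\alpha_t)$ regardless of which contact primitive of $d\alpha_t$ we use, hence $c_k(u_t)=\widetilde c_k(\alpha_0)=c_k(u_0)$ for every $t$ where the flow is defined. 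This shows $c_k$ is a first integral. The only point needing a word of justification here is the diffeomorphism-invariance of the ECH spectral invariants; this follows either directly from the definition (a diffeomorphism identifies the filtered ECH complexes of $\alpha_0$ and $\phi_t^*\alpha_0$ by transporting Reeb orbits and their actions, preserving gradings and filtration levels), or, more conceptually, from the fact that ECH and its filtration are isomorphic to Seiberg--Witten Floer cohomology, which is a topological invariant (Remark~\ref{rem:SWCANON}). Both routes are standard; combining the two halves completes the proof.
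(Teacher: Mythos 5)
Your treatment of the first-integral half is correct and coincides with the paper's: transport of vorticity gives the contact primitive $\tilde\alpha_t=(\phi_t)^*\alpha_0$ of $d\lambda_t$, diffeomorphism invariance of $\widetilde c_k$ gives $\widetilde c_k(\tilde\alpha_t)=\widetilde c_k(\alpha_0)$, and Lemma \ref{lem:spectwoform} removes the dependence on the choice of primitive. The continuity half, however, has a genuine gap at exactly the point you yourself flag as ``the main technical point.'' The $C^0$-continuity property of the ECH spectral invariants, as stated in the paper and in \cite{Hut}, compares $\widetilde c_k(f_j\alpha)$ with $\widetilde c_k(f\alpha)$ for conformal rescalings of a \emph{single fixed} contact form. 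Your argument needs to compare $\widetilde c_k(\alpha-\beta)$ with $\widetilde c_k(\alpha)$, where $\alpha-\beta$ and $\alpha$ are primitives of two \emph{different} two-forms and are in general nowhere proportional; your proposed factorization $\alpha-\beta=f\tilde\alpha$ with $\tilde\alpha$ another primitive of $d\lambda'$ is essentially vacuous (it forces $df\wedge\tilde\alpha=0$, hence $f$ constant), and your fallback claim --- that any two $C^0$-close contact forms in the same isotopy class have close capacities --- is precisely what is \emph{not} available: the paper's remark immediately after the proof states that $C^0$-closeness of contact primitives does not visibly control the size of the identifying isotopy, which is why $c_k$ is not claimed to be $C^1$-continuous. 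If your fallback were true, it would upgrade the theorem to $C^1$-continuity.

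What the paper actually does, and what your proof is missing, is a \emph{quantitative Gray stability} argument that consumes the $C^{1,s}$-closeness of the primitives (which you correctly produced via Lemma \ref{lem:C1bound} but then never used beyond $C^0$). One interpolates $\alpha_t=\alpha_u+t(\alpha_v-\alpha_u)$, defines the Moser vector field $X_t$ by $\iota_{X_t}(d\alpha_t\wedge\alpha_t)=-\partial_t\alpha_t\wedge\alpha_t$, and checks that $\|X_t\|_{C^{1,s}}\leq C\epsilon$ and that the conformal factor $f_t=\partial_t\alpha_t(R_t)$ is $C^0$-small; integrating the resulting linear ODE gives a diffeomorphism $\Phi$ with $\Phi^*\alpha_v=h\,\alpha_u$ and $\|h-1\|_{C^0}\leq C'\epsilon$. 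Only then does the rescaling form of $C^0$-continuity apply (to $h\alpha_u$ versus $\alpha_u$), combined with diffeomorphism invariance $\widetilde c_k(\Phi^*\alpha_v)=\widetilde c_k(\alpha_v)$. This reduction from ``two nearby contact forms'' to ``one contact form and a nearby conformal rescaling of it'' is the step your proposal asserts but does not supply, and it is where the $C^{1,s}$ hypothesis is genuinely spent.
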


\begin{proof}
Let $(M,g)$ be a Riemannian three-manifold and $u$ be a divergence-free vector field in $\mathcal{V}_{[\xi]}$. Let $v \in \mathcal{V}_{[\xi]}$ be another vector field satisfying $||v-u||_{C^{1, s}} \leq \epsilon$. The corresponding dual 1-forms $\lambda_{v}$ and $\lambda_{u}$ are $C^{1,s}$-close as well, and thus their contact type two-forms satisfy $||d\lambda_{v}-d\lambda_{u}||_{C^{0, s}} \leq C \epsilon$. By mimicking the proof of the first item in Lemma \ref{lem:Xcont}, but applying in it Lemma \ref{lem:C1bound} instead of Lemma \ref{lem:C0bound}, we find primitive contact one-forms $\alpha_{v}$ and $\alpha_{u}$ of $d \lambda_{v}$ and $d \lambda_{u}$ that verify $||\alpha_{v}-\alpha_{u}||_{C^{1, s}} \leq C \epsilon$.

Because of the $C^{1}$-closeness, the $1$-forms in the segment $\alpha_{t}=\alpha_{u}+t(\alpha_{v}-\alpha_{u}),\,\, t \in[0, 1]$, are all contact as well. We define a non-autonomous vector field $X_{t}$ by setting
\[
i_{X_{t}} (d \alpha_{t} \wedge \alpha_{t}):=(\alpha_{u}-\alpha_{v}) \wedge \alpha_{t}=-\partial_{t} \alpha_{t} \wedge \alpha_t \,.
\]
In what follows we will reproduce the standard proof of Gray's stability theorem, but keeping track of the norms of the different objects appearing through the argument. To begin with, observe that the $C^{1, s}$-closeness of $\alpha_{u}$ and $\alpha_{v}$ implies that $||X_{t}||_{C^{1, s}} \leq C \epsilon$. In addition, we have that
\[
i_{X_t} d \alpha_{t}=-\pp{\alpha_t}{t}+f_{t} \alpha_{t},
\]
for some $t$-dependent function $f_{t}$, whose $C^{0}$ norm is easily seen to be small. Indeed, denote by $R_t$ the Reeb field of $\alpha_t$, so that we have $f_t=\partial_{t} \alpha_t (R_t)$. Since $\partial_{t} \alpha_t=\alpha_{v}-\alpha_{u}$ has $C^{0}$ norm bounded by $\epsilon$, we conclude that $||f_t||_{C^{0}} \leq C' \epsilon$, where the constant depends on the $C^{0}$ norm of $R_t$, which, provided $\epsilon$ is small, can be bounded by, for example, twice the one of $R_0$.

This understood, we have
\begin{align*}
\frac{d}{dt} (\phi^{t}_{X_{t}})^{*}\alpha_{t}&=(\phi^{t}_{X_{t}})^{*} (\partial_{t} \alpha_{t}+i_{X_t} d \alpha_{t})\\
&=(\phi^{t}_{X_{t}})^{*}(f_{t} \alpha_{t})=(f_{t} \circ \phi^{t}_{X_{t}}) (\phi^{t}_{X_{t}})^{*}  \alpha_{t},
\end{align*}
that is, the one-parameter family of 1-forms $\beta_{t}:=(\phi^{t}_{X_{t}})^{*} \alpha_{t}$ satisfies the linear ODE
\[
\frac{d}{dt} \beta_{t}= g_{t} \beta_{t},
\]
with $g_t:=f_{t} \circ \phi^{t}_{X_{t}}$. Therefore, at any point $p$ in the manifold,
\[
\beta_{t}(p)= e^{\int_{0}^{s} g_{s}(p) ds} \beta_{0}(p),
\]
and in particular, setting $\Phi:=\phi^{1}_{X_{1}}$ we have
\[
\Phi^{*} \alpha_{v}= h\alpha_{u},
\]
with
\[
h(p)=e^{\int_{0}^{1} f_{s}(\phi^{s}_{X_{s}}(p)) ds} .
\]
Since $X_{t}$ and $f_t$ are $C^{1, s}$ and $C^{0}$-small respectively, this function is close to $1$, $||h-1||_{C^{0}} \leq C' \epsilon$. 

Consider a sequence of vectors $\{v_{i}\}_{i=1}^{\infty}$ with $||v_{i}-u||_{C^{1, s}} =\epsilon_{i} \rightarrow 0$, so that, by our previous discussion, $\Phi_{i}^{*} \alpha_{v_i}=h_{i} \alpha_u$ with $||h_{i}-1||_{C^{0}} \rightarrow 0$. The $C^{0}$-continuity property of the ECH spectral invariants (see Definition \ref{def:spectral}) yields 
\[
\widetilde c_k(\Phi^{*}_{i} \alpha_{v_i})=\widetilde c_k(h_{i} \alpha_u) \longrightarrow \widetilde c_k(\alpha_{u})\,.
\]
But the spectral invariants are invariant under diffeomorphisms, $\widetilde c_k(\Phi^{*}_{i} \alpha_{v_i})=\widetilde c_k(\alpha_{v_i})$, so we conclude that $\widetilde c_k( \alpha_{v_i}) \rightarrow  \widetilde c_k(\alpha_{u})$, and thus  $c_k(v_{i})\rightarrow c_{k}(u)$, as claimed. \\

Let us now prove that $c_k$ is preserved by the flow defined by the Euler equations, i.e. its value is constant along an orbit of the flow. Let $u_t$ be a contact type solution to the Euler equations in $(M,g)$, with $u_t \in \mathcal{V}_{[\xi]}$ and denote by $\lambda_0=g(u_0,\cdot)$. Let $\alpha_0$ be a contact primitive of $d\lambda_0$. Transport of vorticity (Equation \eqref{eq:transport}) ensures that the two-form $d\lambda_t$, where $\lambda_t=g(u_t,\cdot)$, admits as primitive the contact form $\alpha_t=(\phi_t^{-1})^*\alpha_0$. It follows from the definition of the spectral invariants that diffeomorphic contact forms have the same spectral invariants, i.e. $\widetilde c_k(\alpha_t)=\widetilde c_k(\alpha_0)$. In particular
$$c_k (u_t)=\widetilde c_k(\alpha_t)=\widetilde c_k(\alpha_0)=c_k(u_0),$$
for all $t$ for which the solution is defined. Accordingly the value of the functional $c_k: \mathcal{V}_{[\xi]}\rightarrow \mathbb{R}$ is constant along the orbit of $u_0$.
\end{proof}

\begin{Remark}
The argument in the proof does not apply if one wants to show that the spectral invariants define $C^1$-continuous integrals of the Euler equations. This is because one can only construct in general $C^0$-close contact primitives of the $C^1$-close initial conditions. As we saw in Lemma \ref{lem:Xcont} these $C^0$-close contact primitives define isotopic contact structures, but it is not clear at all that the isotopy identifying both contact structures is small in $C^1$-topology (or even $C^0$), which is a key step in the proof of Theorem \ref{thm:contspec}. On the other hand, when there is an initial condition $u_0$ such that $g(u_0,\cdot)=\alpha$ is a contact form, then close to that initial condition the spectral invariants are $C^1$-continuous since contracting with the metric gives $C^1$-close contact forms and the proof applies in this case. This happens for example near a rotational Beltrami stationary solution, see Remark \ref{rem:rotBelt}. Nevertheless, our techniques to construct initial conditions with prescribed energy and vorticity in the adjoint orbit of a given contact type exact divergence-free vector field do not necessarily produce initial conditions with the property that $\alpha=g(u_0,\cdot)$ is precisely a contact primitive of $d\alpha$.
\end{Remark}

\begin{Remark}
Most of our discussion in this section applies to other types of spectral invariants of contact manifolds, see e.g. \cite{Hut3, Hut4}, as long as the value of the invariant only depends on the differential of the contact form.  As we have seen, this is the case for several of the ECH spectral invariants, making them suitable to define continuous integrals. Furthermore, using the ECH spectral invariants one can always find a continuous integral that is non-trivial, as we explain in the next section.
\end{Remark}

\subsection{Invariant $C^{1,s}$-open sets}

In this section, we will show that the spectral invariants introduced in the previous section can be used to find countably many $C^{1,s}$-open sets of divergence-free vector fields that do not mix under the Euler equations, whenever it is possible to ensure the non-triviality of any of the integrals introduced in the previous section. As we mentioned earlier, the non-triviality of $\ech(M,\xi,0)$ always holds when $c_1(\xi)$ is torsion, so we restrict to this case for our statement.

\begin{theorem}\label{thm:C1smixing}
    Fix an homotopy class $a$, an helicity $h$, and an isotopy class of contact structure $[\xi]$ on $M$ such that $c_1(\xi)$ is torsion. For any large enough value of the energy $e$, there are countably many $C^{1,s}$-open sets $U_i$ in $\mathcal{V}_{[\xi]}$  such that for any $i\neq j$, we have
    $$ \varphi_t(U_i)\cap U_j=\emptyset, $$
    for every $t$ for which the flow of the Euler equations is defined.    
\end{theorem}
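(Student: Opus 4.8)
The plan is to exhibit a single continuous first integral on $\mathcal{V}_{[\xi]}$ whose image contains infinitely many values, and then carve $\mathcal{V}_{[\xi]}$ into countably many pieces as preimages of pairwise disjoint open intervals. By Theorem \ref{thm:contspec} each ECH spectral invariant $c_k\colon \mathcal{V}_{[\xi]}\to\R_{\ge 0}$ is $C^{1,s}$-continuous and constant along Euler trajectories, so the reduction is: produce some fixed index $k_0$ for which $c_{k_0}$ attains a continuum (or merely infinitely many) of values on $\mathcal{V}_{[\xi]}$. Granting this, pick distinct values $r_1,r_2,\dots$ in that set and pairwise disjoint open intervals $J_i\ni r_i$, and set $U_i:=c_{k_0}^{-1}(J_i)\cap\mathcal{V}_{[\xi]}$, which is $C^{1,s}$-open by continuity of $c_{k_0}$ and non-empty by construction; for $u\in U_i$ and any $t$ where the flow is defined, $c_{k_0}(\varphi_t(u))=c_{k_0}(u)\in J_i$, so $\varphi_t(U_i)\cap U_j=\emptyset$ whenever $i\neq j$.

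To realize such a continuum of values of $c_{k_0}$ inside $\mathcal{V}_{[\xi]}$ at one fixed large energy, I would first use Lemma \ref{lem:Hreeb} — choosing the contact structure produced there inside $[\xi]$, which is possible since $[\xi]\subset p_a$ — to fix a contact form $\beta_0$ with $\ker\beta_0=\xi$, whose associated vorticity field lies in the homotopy class $a$ and whose contact volume (hence helicity) equals $h$ (here $h$ is tacitly of the sign compatible with the coorientation of $\xi$, otherwise $\mathcal{V}_{[\xi]}=\emptyset$). Since $c_1(\xi)$ is torsion, $\ech(M,\xi,0)$ is infinitely generated (Remark \ref{rem:SW}), so infinitely many $\widetilde c_k(\beta_0)$ are finite and positive. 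Next I would produce a second contact form $\beta_1$ with $\ker\beta_1=\xi$ and $\mathcal{H}(\beta_1)=h$ but with $\widetilde c_{k_0}(\beta_1)\neq\widetilde c_{k_0}(\beta_0)$ for some $k_0$, by a purely local modification in the spirit of Proposition \ref{prop:Hnonreeb}: near a closed Reeb orbit of $\beta_0$, rescale $\beta_0$ by a positive function that is very small on a thin tube, thereby creating an arbitrarily short closed Reeb orbit and lowering a low spectral invariant, then compensate in a disjoint ball so that the helicity returns to $h$. The space of contact forms with kernel exactly $\xi$ and helicity exactly $h$ is path-connected — normalize any convex combination of the two positive defining functions — so I join $\beta_0$ to $\beta_1$ by a compact path $\beta_t$, $t\in[0,1]$, in this space; by the $C^0$-continuity of the ECH spectral invariants, $t\mapsto\widetilde c_{k_0}(\beta_t)$ is continuous, so its image is a non-degenerate interval $I$. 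Finally, for each $t$ let $X_t$ be the exact divergence-free field with $\iota_{X_t}\mu=d\beta_t$; compactness of $\{X_t\}$ and continuity of $\curl^{-1}$ and $\mathcal{E}$ give $e_0:=\sup_t\mathcal{E}(\curl^{-1}X_t)<\infty$. For any $e\ge e_0$ and any $t$, Theorem \ref{thm:localvortex} yields $u^{(t)}$ with $\mathcal{E}(u^{(t)})=e$ and $\curl u^{(t)}=F_*X_t$ for some volume-preserving $F$ isotopic to the identity; then $\curl u^{(t)}$ is of contact type with contact structure $F^*\xi$ isotopic to $\xi$, homotopy class $a$ and helicity $h$, so $u^{(t)}\in\mathcal{V}_{[\xi]}$, while $c_{k_0}(u^{(t)})=\widetilde c_{k_0}(F^*\beta_t)=\widetilde c_{k_0}(\beta_t)$ by Lemma \ref{lem:spectwoform} and diffeomorphism invariance. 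Hence $c_{k_0}(\mathcal{V}_{[\xi]})\supseteq I$, and the partition argument above concludes.

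The main obstacle — essentially the only non-formal step — is the claim that some ECH spectral invariant genuinely varies over contact forms defining $\xi$ when the helicity is fixed; note that the volume asymptotics $\widetilde c_k^2/k\to 2\mathcal{H}$ do not help here since $\mathcal{H}$ is held constant, so one really needs a finite-$k$ argument. I expect the clean route to be a direct local ECH computation near the modified Reeb orbit (the modified neighbourhood being a standard model, e.g. a thin irrational-ellipsoid-type region inside a Darboux ball whose ECH capacities are known), combined with the monotonicity of the ECH spectral invariants under exact symplectic cobordism to get the inequality $\widetilde c_{k_0}(\beta_1)\le\widetilde c_{k_0}(\beta_0)$ and then upgrading it to a strict inequality by quantifying how much the short orbit drags the low spectral invariant down relative to the renormalization; on rational homology spheres, in particular $S^3$ with its standard structure, this variation can instead be read off directly from the explicit ECH capacity formulas for ellipsoids.
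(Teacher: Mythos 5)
Your overall architecture coincides with the paper's: use the $C^{1,s}$-continuity and Euler-invariance of the ECH spectral invariants from Theorem \ref{thm:contspec}, produce a family of contact forms defining $\xi$ with fixed contact volume $h$ along which some $\widetilde c_{k_0}$ sweeps out an interval, push these down to vorticities and then to velocities of prescribed energy via Theorem \ref{thm:localvortex}, and separate $\mathcal{V}_{[\xi]}$ by preimages of disjoint intervals. The formal parts of this (the preimage construction, the energy normalization via local vortex stretching, the invariance of $c_{k_0}$ along $\varphi_t$, and the observation that diffeomorphism invariance plus Lemma \ref{lem:spectwoform} lets you compute $c_{k_0}(u^{(t)})=\widetilde c_{k_0}(\beta_t)$) are all correct and essentially identical to the paper's.

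However, there is a genuine gap at exactly the step you yourself flag as "the main obstacle": you never actually prove that some finite-index spectral invariant varies over contact forms with kernel $\xi$ and fixed contact volume. Your proposed route --- create a short Reeb orbit by shrinking $\beta_0$ on a thin tube, then invoke cobordism monotonicity to get $\widetilde c_{k_0}(\beta_1)\le\widetilde c_{k_0}(\beta_0)$ and "upgrade to a strict inequality" --- does not obviously close. Monotonicity gives only a non-strict inequality, and the presence of a single short closed orbit does not by itself force a fixed-index $\widetilde c_{k_0}$ to drop: whether that orbit (or orbit sets containing it) represents the relevant nonzero classes in $\ech(M,\xi,0)$ depends on the differential and the $U$-map, which a purely local computation does not control; and the compensating rescaling needed to restore the volume pushes in the opposite direction. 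You correctly note that the Weyl-law asymptotics $\widetilde c_k^2/k\to 2\mathcal{H}$ are useless at fixed volume, but you are left with no finite-$k$ mechanism. The paper closes this gap differently (Lemma \ref{lem:spectralinterval}): it invokes the theorem of Abbondandolo--Bramham--Hryniewicz--Salom\~ao producing, for any $r$, a contact form $\alpha_1$ defining $\xi$ with systolic ratio larger than $r/h$; after rescaling to volume $h$, \emph{every} closed Reeb orbit of $\alpha_1$ has period greater than $r$, so spectrality alone forces $\widetilde c_{k_0}(\alpha_1)>r$, and continuity along the interpolating path of fixed volume then yields the interval $[N,r]$ in the image. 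Replacing your local-modification sketch by this systolic-ratio input would complete your argument; as written, the proof is incomplete at its one essential non-formal step.
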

\begin{Remark}\label{rem:chernhyp}
   The hypothesis that $c_1(\xi)$ is torsion is, for instance, always satisfied in rational homology spheres. In general, $c_1(\xi)$ only depends on the homotopy class of plane fields defined by $\xi$, and hence on the homotopy class of the vorticity field $a$. On any closed three-manifold, there are countably many homotopy classes of plane fields for which any contact structure in that homotopy class has torsion first Chern class, and thus our Theorem applies to countably many choices of $a$. This is seen by the fact that there are countably many homotopy classes of plane fields for which the Euler class (which is equal to the first Chern class) vanishes. We refer to \cite[Sections 4.2 and 4.3]{Ge} for a very detailed discussion of these facts.
\end{Remark} 

This theorem follows essentially from the continuity of the spectral invariants established in Theorem \ref{thm:contspec} and the following Lemma, which establishes that it is always possible to find spectral invariants that are non-trivial.

\begin{lemma}\label{lem:spectralinterval}
    Let $(M,\xi)$ be a closed contact three-manifold. Let $\alpha$ be any contact form. For any $k$ such that $\widetilde c_k(\alpha)=N >0$ is finite and any $r>N$, there is family of contact forms $\alpha_t$ defining $\xi$ such that:
    \begin{itemize}
        \item[-] their contact volume is constant $\int_M \alpha_t\wedge d\alpha_t=\int_M \alpha\wedge d\alpha$,
        \item[-] the image of $\widetilde c_k(\alpha_t)$ contains the open interval of $(N,r)$.
    \end{itemize}
\end{lemma}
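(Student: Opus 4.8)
The plan is to reduce the statement to the following: for any threshold one can find a \emph{single} contact form for $\xi$ of contact volume exactly $V_{0}:=\int_{M}\alpha\wedge d\alpha$ whose $k$-th ECH spectral invariant exceeds that threshold, and then interpolate. Granting this, fix $R>r$ and a contact form $\beta$ with $\ker\beta=\xi$, $\int_{M}\beta\wedge d\beta=V_{0}$ and $\widetilde c_{k}(\beta)>R$ (if the form produced has the wrong volume, rescale it by a constant $c$, using $\widetilde c_{k}(c\beta)=c\,\widetilde c_{k}(\beta)$, and enlarge $R$ from the start). Writing $\alpha=f_{\alpha}\lambda$ and $\beta=f_{\beta}\lambda$ for a fixed contact form $\lambda$ of $\xi$ and positive functions $f_{\alpha},f_{\beta}$, the convex combination $\gamma_{t}:=(1-t)\alpha+t\beta$ is a contact form for $\xi$ for every $t\in[0,1]$, and
\[
\alpha_{t}:=\left(\frac{V_{0}}{\int_{M}\gamma_{t}\wedge d\gamma_{t}}\right)^{1/2}\gamma_{t},\qquad t\in[0,1],
\]
is a continuous family of contact forms for $\xi$, all of contact volume $V_{0}$, with $\alpha_{0}=\alpha$ (as $\int_{M}\alpha\wedge d\alpha=V_{0}$) and $\alpha_{1}=\beta$. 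Each $\alpha_{t}$ is a conformal rescaling of $\alpha$, so the $C^{0}$-continuity property of the ECH spectral invariants makes $t\mapsto\widetilde c_{k}(\alpha_{t})$ a continuous real function; it equals $N$ at $t=0$ and is $>r$ at $t=1$ (and finite there, since $\ech(M,\beta,0)\cong\ech(M,\xi,0)$ has dimension at least $k$, because $\widetilde c_{k}(\alpha)<\infty$). By the intermediate value theorem the image of $t\mapsto\widetilde c_{k}(\alpha_{t})$ contains $[N,\widetilde c_{k}(\beta)]\supseteq(N,r)$, as required, and the contact volume is constantly $V_{0}$ along the family by construction.

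The core of the argument is therefore to produce $\beta$, i.e. to show that on a fixed contact $(M,\xi)$ the functional $\widetilde c_{k}$ is unbounded among contact forms of a fixed contact volume. I would build $\beta$, non-degenerate, whose Reeb flow carries \emph{no null-homologous orbit set of action below $R$ other than the empty one}. For such a $\beta$ the filtered complex $\ech^{L}(M,\beta,0)$ is generated only by the empty orbit set for every $L<R$ (exactly as for $\alpha$ below its first positive action level), so the image of $\iota_{L}\colon\ech^{L}(M,\beta,0)\to\ech(M,\beta,0)$ has dimension at most $1$ for all $L<R$; since the hypothesis $\widetilde c_{k}(\alpha)=N>0$ already forces this dimension to be $<k$ below the first action level of $\alpha$, the same bound holds for $\beta$ below $R$, whence $\widetilde c_{k}(\beta)\ge R$. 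Existence of contact forms with such an "action gap at the bottom" — equivalently, with arbitrarily large systolic ratio (shortest Reeb period squared over contact volume) when $H_{1}=0$ — is known for every closed contact three-manifold; on $S^{3}$ it can be made quite concrete, and on a general $M$ one implants a suitable "slow" local model in a solid-torus neighbourhood of a knot positively transverse to $\xi$ and verifies that no short null-homologous orbit set survives.

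The main obstacle will be precisely this construction of $\beta$: one must control the Reeb dynamics of the modified contact form well enough to guarantee that the bottom of filtered ECH stays trivial up to action $R$ while keeping the contact volume bounded, and this is a genuine dynamical input (or an appeal to the existing literature on large systolic ratios). Everything else — the interpolation, the continuity of $\widetilde c_{k}$ along conformal families, the intermediate-value argument, and the bookkeeping of the volume through constant rescalings — is routine given the properties of $\widetilde c_{k}$ recalled above.
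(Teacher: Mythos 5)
Your proposal is correct and follows essentially the same route as the paper: the input you flag as the main obstacle --- a contact form defining $\xi$ with arbitrarily large systolic ratio, hence no null-homologous Reeb orbit sets of small action --- is exactly the theorem of Abbondandolo--Bramham--Hryniewicz--Salom\~ao that the paper invokes, after which one interpolates linearly within the conformal class and concludes by $C^0$-continuity, spectrality, and the intermediate value theorem. Your explicit renormalization $\alpha_t=\bigl(V_0/\int_M\gamma_t\wedge d\gamma_t\bigr)^{1/2}\gamma_t$ is in fact a welcome refinement, since for a non-constant conformal factor $f$ the unrescaled linear interpolation $[tf+(1-t)]\alpha$ does not have constant contact volume (by Cauchy--Schwarz, $\int_M f\,\alpha\wedge d\alpha<\int_M\alpha\wedge d\alpha$ unless $f\equiv 1$), so the normalization you supply is actually needed to get the first bullet of the statement.
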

\begin{proof}
    Let $\alpha$ be any contact form defining $\xi$ with contact volume $\int_M \alpha\wedge d\alpha=h$, and $\widetilde c_k(\alpha)$ some finitely value spectral invariant $\widetilde c_k(M,\alpha)=N > 0$. 

    Recall that the systolic ratio of a contact form $\gamma$ is 
    $$\rho(\gamma)=\frac{T_{\operatorname{min}}(\gamma)}{\operatorname{vol}(\gamma)}, $$
    where $T_{\operatorname{min}}(\gamma)$ denotes the minimal action of a periodic orbit of the Reeb flow of $\gamma$, and $\operatorname{vol}(\gamma)=\int_M \gamma \wedge d\gamma$. By the main theorem in \cite{ABHS}, for any arbitrarily large $r>0$ there exists a contact form $\alpha_1$ whose systolic ratio is larger than $\frac{r}{h}$. The systolic ratio is invariant under constant rescaling of the contact form, so up to multiplying $\alpha_1$ by a positive constant $c$, we can assume that $\operatorname{vol}(\alpha_1)=h$. In particular the minimal action of a period Reeb orbit of $\alpha_1$ satisfies 
    $$T_{\operatorname{min}}(\alpha_1)>r.$$ 
    Write $\alpha_1=f\alpha$, where $f\in C^\infty(M)$ everywhere positive. We know that $\int_M f\alpha\wedge d\alpha=\int_M \alpha\wedge d\alpha$, so the path of contact forms $\alpha_t= t \alpha_1 + (1-t)\alpha= [tf+(1-t)]\alpha$ is a path of contact forms with constant contact volume $h$. By continuity of the spectral invariant and its spectrality, we know that $\widetilde c_k(\alpha_t)$ is positive and a continuous function of $t$. Furthermore, again by spectrality, we know as well that $\widetilde c_k(\alpha_1)>r$ necessarily. It follows that $[N,r]\subset \{\widetilde c_k(\alpha_t)\,, t \in [0, 1]\}$, as claimed.
\end{proof}

We conclude by establishing the non-mixing properties of contact type solutions.

\begin{proof}[Proof of Theorem \ref{thm:C1smixing}]
Fix $a,h$ and an isotopy class of contact structures $[\xi]$ on $M$, which by assumption has torsion first Chern class (equivalently, torsion Euler class). As discussed in section \ref{ss:ECH}, there is some $k$ for which $\widetilde c_k$ is non-trivial on some contact form defining $\xi$. We can apply Lemma \ref{lem:spectralinterval} to find a family of contact forms $\alpha_s$ with $\ker \alpha_s=\xi$, with contact volume $h$, and such that $\widetilde c_k(\alpha_s)$ contains an open interval $(a,b)$. Denote by $X_s$ the exact divergence-free vector fields defined by 
$$\iota_{X_s}\mu=d\alpha_s.$$ 
An application of Theorem \ref{thm:localvortex} to each $X_s$ shows that there is a (possibly non-continuous) family of positive real numbers $e_s$ such that for any $e>e_s$ there is a volume-preserving vector field $u_s$ such that:
\begin{itemize}
    \item[-] $\curl(u_s)=Y_s$ where $Y_s=\varphi_s^*X_s$ for a (non-continuous) family of volume-preserving diffeomorphisms $\varphi_s: M\rightarrow M$ isotopic to the identity,
    \item[-] $\mathcal{E}(u_s)=e$.
\end{itemize} 
The family $\alpha_s$ is parametrized by the compact set $[0,1]$, so let $E$ be any positive real greater than $e_s$ for each $s$. Then the family $u_s$ is a (possibly non-continuous) family of divergence-free vector fields in $\mathcal{V}_{[\xi]}\subset \mathcal{V}_{a,h, e}$. Furthermore, the spectral invariant of $dg(u_s,\cdot)=d\varphi_s^*\alpha_s$ has the same value as the spectral invariant of $\alpha_s$, so we deduce that $c_k(u_s)$ contains the interval $(a,b)$. Choose countably many disjoint points $x_i\in (a,b)$ and let $u_{s_i}$ be a divergence-free vector field such that $c_k(u_{s_i})=x_i$. By the continuity of the spectral invariants proved in Theorem \ref{thm:contspec}, we can now find for each $u_{s_i}$ a $C^{1,s}$-open set $U_i\subset \mathcal{V}_{[\xi]}$ such that the intervals $\operatorname{Im}_{c_k}(U_i)$ are pairwise disjoint. Since the value of the spectral invariant is invariant under the Euler equations, the conclusion of the Theorem follows. 
\end{proof}

Theorem \ref{thm:spec} stated in the introduction is a combination of Theorem \ref{thm:contspec} and Theorem \ref{thm:C1smixing}.

\end{document}